\DeclareMathOperator{\Trace}{Trace}
\DeclareMathOperator{\TR}{TR}
\DeclareMathOperator{\rank}{rank}
\DeclareMathOperator{\OPT}{OPT}
\DeclareMathOperator{\range}{range}
\NewDocumentCommand \tensor {O{}m} {\boldsymbol{#1\mathscr{\MakeUppercase{#2}}}} 
\newcommand{\mat}[1]{\mathbf{#1}}
\newcommand{\vect}[1]{\bm{#1}}
\newcommand{\bb}[1]{\mathbb{#1}}
\newcommand{\bigO}[1]{\mathcal{O}\left( #1 \right)}
\newcommand{\defeq}{\stackrel{\text{\tiny \textnormal{def}}}{=}}  
\newenvironment{breakablealgorithm}
  {
     \refstepcounter{algorithm}
     \hrule height.8pt depth0pt \kern2pt
     \renewcommand{\caption}[2][\relax]{
       {\raggedright\textbf{\ALG@name~\thealgorithm} ##2\par}%
       \ifx\relax##1\relax 
         \addcontentsline{loa}{algorithm}{\protect\numberline{\thealgorithm}##2}%
       \else 
         \addcontentsline{loa}{algorithm}{\protect\numberline{\thealgorithm}##1}%
       \fi
       \kern2pt\hrule\kern2pt
     }
  }{
     \kern2pt\hrule\relax
  }
\journalname{}
\begin{document}
\begin{sloppypar}

\title{Tracking Tensor Ring Decompositions of Streaming Tensors\thanks{The work is supported by the National Natural Science Foundation of China (No. 11671060) and the
Natural Science Foundation of Chongqing, China (No. cstc2019jcyj-msxmX0267)
}}


\author{Yajie Yu         \and
        Hanyu Li 
}


\institute{Yajie Yu \at
              College of Mathematics and Statistics, Chongqing University, Chongqing, 401331, P.R. China; \\
              \email{zqyu@cqu.edu.cn}           
           \and
           Hanyu Li \at
              College of Mathematics and Statistics, Chongqing University, Chongqing, 401331, P.R. China; \\
              \email{lihy.hy@gmail.com or hyli@cqu.edu.cn}
}

\date{Received: date / Accepted: date}

\maketitle

\begin{abstract}
Tensor ring (TR) decomposition is an efficient approach to discover the hidden low-rank patterns for higher-order tensors, and streaming tensors are becoming highly prevalent in real-world applications. 
In this paper, we investigate how to track TR decompositions of streaming tensors. 
An efficient 
algorithm 
is first proposed. 
Then, based on this algorithm and randomized techniques, we present a randomized streaming TR decomposition. The proposed algorithms make full use of the structure of TR decomposition, and the randomized version can allow any sketching type.
Theoretical results on sketch size are provided. In addition, the complexity analyses for the obtained algorithms are also given. 
We compare our proposals with the existing batch methods using both real and synthetic data. 
Numerical results show that they have better performance in computing time with maintaining similar accuracy.
\keywords{Tensor ring decomposition \and Streaming tensor \and Randomized algorithm \and Alternating least squares \and Kronecker sub-sampled randomized Fourier transform \and Uniform sampling \and Importance sampling \and Leverage scores}
\subclass{15A69 \and 68W20}
\end{abstract}

\section{Introduction}
\label{sec:introduction}
Tensor ring (TR) decomposition \cite{zhao2016TensorRing} is an important tool for higher-order data analysis. It decomposes an $N$th-order tensor into a cyclic interconnection of $N$ 3rd-order tensors, and hence has the advantage of circular dimensional permutation invariance.
Specifically, for a tensor $\tensor{X} \in \bb{R}^{I_1 \times I_2 \times \cdots \times I_N}$, it has the TR format as follows
\begin{align*}
	\tensor{X}(i_1, \cdots, i_N)&={\Trace} \left(\mat{G}_1(i_1) \mat{G}_2(i_2) \cdots \mat{G}_N(i_N)\right)
	={\Trace} \left(\prod_{n=1}^N \mat{G}_n(i_n)\right),
\end{align*}
where $\mat{G}_n(i_n) = \tensor{G}_n(:,i_n,:) \in \bb{R}^{R_n \times R_{n+1}}$ is the $i_n$-th \emph{lateral slice} of the \emph{core tensor (TR-core)} $\tensor{G}_n \in \bb{R}^{R_n \times I_n \times R_{n+1}}$. 
Note that a \emph{slice} is a 2nd-order section, i.e., a matrix, of a tensor obtained by fixing all the tensor indices but two. The sizes of TR-cores, i.e., $R_k$ with $k=1,\cdots,N$ and $ R_{N+1}=R_{1} $, are called \emph{TR-ranks}. 
Additionally, we use the notation $\TR \left( \{\tensor{G}_n\}_{n=1}^N \right)$ to denote the TR decomposition of a tensor.

In contrast to the two most popular tensor decompositions, i.e., CANDECOMP-PARAFAC (CP) and Tucker decompositions \cite{kolda2009TensorDecompositions}, TR decomposition avoids the NP-hard problem of computing the CP-rank and the curse of dimensionality due to the core tensor in Tucker decomposition. 
These advantages stem from the algorithms for finding TR-ranks being stable and the number of parameters of TR decomposition scaling linearly with the tensor 
order $N$.
Furthermore, it is also feasible and convenient to directly implement some algebra operations in TR format \cite{zhao2016TensorRing}, e.g., addition, dot product, norm, matrix-by-vector, etc., which is conducive to significantly enhancing the computational efficiency. 
Therefore, TR decomposition has already been utilized effectively in scientific computing to tackle intractable higher-dimensional and higher-order problems. 
This makes the problem of fitting $\TR \left( \{\tensor{G}_n\}_{n=1}^N \right)$ to a tensor $\tensor{X}$ be increasingly important.

The above fitting problem can be written as the following minimization problem:
\begin{equation}
	\label{eq:trmin}
	\mathop{\min}_{\tensor{G}_1, \cdots, \tensor{G}_N} \left\| \TR \left( \{\tensor{G}_n\}_{n=1}^N \right) - \tensor{X} \right\|_F,
\end{equation} 
where $\| \cdot \|_F$ denotes the Frobenius norm of a matrix or tensor. 
One 
standard computational method for this problem is to prescribe the fixed TR-ranks first, and then to determine the decomposition via alternating least squares (ALS). It is usually written as TR-ALS. 
Another classical method is to prescribe a fixed target accuracy first, and then to compute the decomposition via singular value decomposition. 
See \cite{zhao2016TensorRing,mickelin2020AlgorithmsComputing} for the details on these two methods. Moreover, with the rapid emergence of large-scale problems, 
the above two methods have been extended to randomized versions \cite{yuan2019RandomizedTensor,ahmadi-asl2020RandomizedAlgorithms,malik2021SamplingBasedMethod,malik2022MoreEfficient,yu2022PracticalSketchingBased}. 
In addition, many other algorithms have also been proposed and developed for TR decomposition; see, e.g., \cite{espig2012NoteTensor,ahmadi-asl2021CrossTensor,yuan2018HigherdimensionTensor}. 
However, all of these works are for the case  
that the whole tensor data $\tensor{X}$ is available and static. 

As we know, 
in many practical applications, there only fragmentary data sets are initially available, with new data sets becoming available at the next time step or appearing continuously over time. Live video broadcasts, surveillance videos, network flow, and social media data are examples.
Such tensors are called streaming tensors or incremental/online tensors \cite{sun2008IncrementalTensor}.
Developing streaming algorithms for TR decompositions, i.e., tracking TR decompositions, of such streaming tensors is both fascinating and necessary. This is because when the initial decomposition is already known, it is more expedient to update the streaming decomposition than to recalculate the entire decomposition.
In previous research, some streaming methods have been successively presented for some other tensor decompositions; see e.g.,  \cite{zhou2016AcceleratingOnline,ma2018RandomizedOnline,zeng2021IncrementalCP} for CP decomposition, \cite{sun2006StreamsGraphs,sun2008IncrementalTensor,chachlakis2021DynamicL1Norm,xiao2018EOTDEfficient,sun2020LowRankTucker} for Tucker decomposition,  and \cite{liu2021IncrementalTensorTrain,thanh2021AdaptiveAlgorithms,kressner2022StreamingTensor} for tensor train (TT) decomposition \cite{oseledets2011TensorTrainDecomposition}. A more comprehensive and detailed overview can be found in \cite{thanh2022ContemporaryComprehensive}.
However, streaming algorithms related to (instead of aiming at) TR decomposition have only been studied in several papers \cite{he2022PatchTrackingbased,yu2022OnlineSubspace,huang2022MultiAspectStreaming}. 
Specifically, He and Atia \cite{he2022PatchTrackingbased} developed a patch-tracking-based streaming TR completion framework for visual data recovery and devised 
a streaming 
algorithm 
that can update the latent TR-cores and complete the missing entries of patch tensors. 
Yu et al. \cite{yu2022OnlineSubspace} proposed an online TR subspace learning and imputation model by formulating exponentially weighted least squares with Frobenius norm regularization of TR-cores. The alternating recursive least squares and stochastic gradient algorithms were employed to solve the proposed model.
Huang et al. \cite{huang2022MultiAspectStreaming} provided a multi-aspect streaming TR completion method. 
Whereas, all of these works didn't fully consider the special structure of TR decomposition. 
From \cite{yu2022PracticalSketchingBased,yu2022PracticalAlternating}, it is shown that exploring the structure 
can significantly improve the efficiency of the related algorithms.

Therefore, in this paper, we focus on developing efficient ALS-based streaming algorithms for TR decomposition via making full use of its structure. 
Specifically,
inspired by the work on CP decomposition in \cite{zhou2016AcceleratingOnline}, we first propose an 
efficient streaming algorithm that can incrementally track TR decompositions of streaming tensors with any order. 
Then, motivated by the ideas in \cite{ma2018RandomizedOnline,malik2021SamplingBasedMethod,yu2022PracticalSketchingBased,yu2022PracticalAlternating}, we derive a 
randomized streaming TR decomposition 
to deal with streaming large-scale tensors. 
Three randomized strategies, i.e., uniform sampling, leverage-based sampling, and Kronecker
sub-sampled randomized Fourier transform (KSRFT), are used to 
reduce the dimension of the coefficient and unfolding matrices in the ALS subproblems, which makes the computing time and memory usage be reduced greatly. 
Moreover, these strategies can also avoid forming the full coefficient and sketching  matrices and implementing matrix multiplication between large matrices.

The rest of this paper is organized as follows. 
\Cref{sec:preliminaries} first gives some tensor notations and basic operations, and then briefly reviews the algorithms for TR decomposition. 
In \Cref{sec:proposed}, we present our streaming TR decomposition and 
its randomized variant as well as three sketching techniques.
The evaluation of the computational performance of the proposed algorithms is reported in \Cref{sec:experiments}. 
Finally, \Cref{sec:conclusion} makes a conclusion and outlines some further directions. The missed proofs and the specific algorithms based on different sketches are given in \Cref{SUPP:proof} and \Cref{SUPP:alg}, respectively.

\section{Preliminaries and Related Works}
\label{sec:preliminaries}

For convenience on the following presentment, 
we denote $[I] \defeq \{ 1, \cdots, I \}$ for a positive integer $I$, and set  $\overline{i_1 i_2 \cdots i_N} \defeq 1 + \sum_{n=1}^N(i_n-1)\prod_{j=1}^{n-1}I_j$ for the indices $i_1 \in [I_1], \cdots, i_N \in [I_N]$. 

\begin{definition}
    Three unfolding matrices of a tensor $\tensor{X} \in \bb{R}^{I_1 \times I_2 \cdots \times I_N}$ are defined element-wise:
    \begin{align*}
        &\text{Classical Mode-$n$ Unfolding:}&\mat{X}_{(n)}(i_n, \overline{i_1 \cdots i_{n-1} i_{n+1} \cdots i_N})=\tensor{X}(i_1, \cdots, i_N),  \\
        &\text{Mode-$n$ Unfolding:}&\mat{X}_{[n]}(i_n, \overline{i_{n+1} \cdots i_N i_1 \cdots i_{n-1}})=\tensor{X}(i_1, \cdots, i_N),  \\
        &\text{$n$ Unfolding:}&\mat{X}_{<n>}(\overline{i_1, \cdots, i_n}, \overline{i_{n+1} \cdots i_N})=\tensor{X}(i_1, \cdots, i_N), 
    \end{align*}
    which are of size $I_n \times \prod_{j \ne n} I_j$, $I_n \times \prod_{j \ne n} I_j$, and $\prod_{j=1}^{n} I_j \times \prod_{j=n+1}^{N} I_j$, respectively.
\end{definition}




\begin{definition}[TTM]
    The \textbf{tensor-times-matrix (TTM) multiplication} of a tensor $\tensor{X} \in \bb{R}^{I_1 \times I_2 \cdots \times I_N}$ and a matrix $\mat{U} \in \bb{R}^{J \times I_n}$ 
    is a tensor of size $I_1 \times \cdots \times I_{n-1} \times J \times I_{n+1} \times \cdots \times I_N$ denoted by $\tensor{X} \times_n \mat{U}$ and defined element-wise via
	\begin{equation*}
	(\tensor{X} \times_n \mat{U})(i_1, \cdots, i_{n-1}, j, i_{n+1}, \cdots, i_N) = \sum_{i_n = 1}^{I_n} \tensor{X}(i_1, \cdots, i_n, \cdots, i_N) \mat{U}(j, i_n).
	\end{equation*}
\end{definition}

Multiplying an $N$th-order tensor by multiple matrices on distinct modes is known as \emph{Multi-TTM}. 
In particular, multiplying an $N$th-order tensor by the matrices $\mat{U}_j$ with $j=1,\cdots, N$ in each mode implies $\tensor{Y} = \tensor{X} \times_1 \mat{U}_1 \times_2 \mat{U}_2 \cdots \times_N \mat{U}_N$. Its mode-$n$ unfolding can be presented as follows:

\begin{align}
\label{prop:ttm}
    \mat{Y}_{[n]} = \mat{U}_n \mat{X}_{[n]} \left( \mat{U}_{n-1} \otimes \cdots \otimes \mat{U}_{1} \otimes \mat{U}_{N} \otimes \cdots \otimes \mat{U}_{n+1} \right)^\intercal.
\end{align}

We now detail the TR-ALS mentioned in \Cref{sec:introduction}, which is a popular algorithm for TR decomposition. 
To achieve this, we need the following definition.
\begin{definition}
\label{def:subchain_tensor}
	Let $\tensor{X} = \TR \left( \{\tensor{G}_n\}_{n=1}^N \right) \in \bb{R}^{I_1 \times I_2 \cdots \times I_N}$. The \textbf{subchain tensor} $\tensor{G}^{\ne n} \in \bb{R}^{R_{n+1} \times \prod_{j \ne n} I_j \times R_n}$ is the merging of all TR-cores except the $n$-th one and can be written slice-wise via
	\begin{equation*}
	\mat{G}^{\ne n}(\overline{i_{n+1} \cdots i_N i_1 \cdots i_{n-1}})=\prod_{j=n+1}^{N} \mat{G}_j(i_j) \prod_{j=1}^{n-1} \mat{G}_j(i_j).
	\end{equation*}
\end{definition}

Thus, according to Theorem 3.5 in \cite{zhao2016TensorRing}, the objective in \eqref{eq:trmin} can be rewritten as the following $N$ subproblems
\begin{equation}
\label{eq:tr_als}
\mat{G}_{n(2)} = \mathop{\arg\min}_{\mat{G}_{n(2)}} \frac{1}{2}\left\|\mat{G}_{[2]}^{\ne n} \mat{G}_{n(2)}^\intercal-\mat{X}_{[n]}^\intercal \right\|_F,\ n=1,\cdots, N.
\end{equation}
The so-called TR-ALS is a method that keeps all TR-cores fixed except the $n$-th one and finds the solution to the LS problem \eqref{eq:tr_als} with respect to it. We summarize the method in \Cref{alg:tr_als}.

\begin{algorithm}
\caption{TR-ALS \cite{zhao2016TensorRing}}
\label{alg:tr_als}
	\textbf{Input:} $\tensor{X} \in \bb{R}^{I_1 \times \cdots \times I_N}$, TR-ranks $R_1, \cdots, R_N$
	
	\textbf{Output:} TR-cores $\{ \tensor{G}_n \in \bb{R}^{R_n \times I_n \times R_{n+1}} \}_{n=1}^N$
	\begin{algorithmic}[1]\footnotesize
		\State Initialize TR-cores $\tensor{G}_1, \cdots, \tensor{G}_N$ \label{line:als_init}
		\Repeat
		\For{$n = 1, \cdots, N$}
		\State Compute $\mat{G}_{[2]}^{\ne n}$ from TR-cores \label{line:als_subchain}
		\State Update $\tensor{G}_n = \mathop{\arg\min}_{\tensor{G}_{n}} \left\| \mat{G}_{[2]}^{\ne n} \mat{G}_{n(2)}^\intercal - \mat{X}_{[n]}^\intercal \right\|_F$ \label{line:als_ls}
		\EndFor
		\Until{termination criteria met}
	\end{algorithmic}
\end{algorithm}

However, TR-ALS does not fully utilize the structure of the coefficient matrix $\mat{G}_{[2]}^{\ne n}$. Yu and Li \cite{yu2022PracticalAlternating} fixed this issue recently and proposed a more efficient algorithm called TR-ALS-NE, which is the basis of our first algorithm in the present paper. 
We first list the required definitions and property before detailing the algorithm in \Cref{alg:tr_als_ne}.

\begin{definition}[Outer Product]
\label{def:outer_product}
	The \textbf{outer product} of two tensors $\tensor{A} \in \bb{R}^{I_1 \times \cdots \times I_N}$ and $\tensor{B} \in \bb{R}^{J_1 \times \cdots \times J_M}$ is a tensor of size $I_1 \times \cdots \times I_N \times J_1 \times \cdots \times J_M$ denoted by $\tensor{A} \circ \tensor{B}$ and defined element-wise via
	\begin{equation*}
		(\tensor{A} \circ \tensor{B})(i_1, \cdots, i_N, j_1, \cdots, j_M) = \tensor{A}(i_1, \cdots, i_N) \tensor{B}(j_1, \cdots, j_M). 
	\end{equation*}
\end{definition}

\begin{definition}[General Contracted Tensor Product]\label{def:ct_product}
	The \textbf{general contracted tensor product} of two tensors $\tensor{A} \in \bb{R}^{I_1 \times J \times R_1 \times K}$ and $\tensor{B} \in \bb{R}^{J \times I_2 \times K \times R_2}$ is a tensor of size $I_1 \times I_2 \times R_1 \times R_2$ denoted by $\tensor{A} \times_{2,4}^{1,3} \tensor{B}$ and defined element-wise via
		\begin{equation*}
		(\tensor{A} \times_{2,4}^{1,3} \tensor{B})(i_1, i_2, r_1, r_2) = \sum_{j,k} \tensor{A}(i_1, j, r_1, k) \tensor{B}(j, i_2, k,r_2).
	\end{equation*}
\end{definition}

\begin{definition}[Subchain Product \cite{yu2022PracticalSketchingBased}] 
\label{def:subchain_product}
	The \textbf{mode-2 subchain product} of two tensors $\tensor{A} \in \bb{R}^{I_1 \times J_1 \times K}$ and $\tensor{B} \in \bb{R}^{K \times J_2 \times I_2}$ is a tensor of size $I_1 \times J_1 J_2 \times I_2$ denoted by $\tensor{A} \boxtimes_2 \tensor{B}$ and  defined as 
	\begin{equation*}
	(\tensor{A} \boxtimes_2 \tensor{B})(\overline{j_1 j_2}) = \mat{A}(j_1)\mat{B}(j_2).
	\end{equation*}
\end{definition}

\begin{proposition}\cite{yu2022PracticalAlternating}
\label{prop:subchain_gram}
	Let $\tensor{A} \in \bb{R}^{I_1 \times J \times K_1}$, $\tensor{B} \in \bb{R}^{K_1 \times R \times L_1}$, $\tensor{C} \in \bb{R}^{I_2 \times J \times K_2}$ and $\tensor{D} \in \bb{R}^{K_2 \times R \times L_2}$ be 3rd-order tensors. Then
	\begin{equation*}
		(\tensor{A} \boxtimes_2 \tensor{B})_{[2]}^\intercal (\tensor{C} \boxtimes_2 \tensor{D})_{[2]} = \left( (\sum_{r=1}^{R} \mat{B}(r)^\intercal \circ \mat{D}(r)^\intercal) \times_{2,4}^{1,3} (\sum_{j=1}^{J} \mat{A}(j)^\intercal \circ \mat{C}(j)^\intercal) \right)_{<2>}.
	\end{equation*}
\end{proposition}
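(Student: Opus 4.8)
The plan is to prove the identity by a direct entrywise computation, reducing both sides to the same quadruple sum over $j\in[J]$, $r\in[R]$, $k_1\in[K_1]$, $k_2\in[K_2]$.

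First I would unwind the left-hand side. By \Cref{def:subchain_product} the lateral slice satisfies $(\tensor{A}\boxtimes_2\tensor{B})(\overline{jr})=\mat{A}(j)\mat{B}(r)$, hence entrywise $(\tensor{A}\boxtimes_2\tensor{B})(i_1,\overline{jr},\ell_1)=\sum_{k_1}\tensor{A}(i_1,j,k_1)\tensor{B}(k_1,r,\ell_1)$; passing to the mode-$2$ unfolding rewrites this as $(\tensor{A}\boxtimes_2\tensor{B})_{[2]}(\overline{jr},\overline{\ell_1 i_1})$, and analogously for $\tensor{C}\boxtimes_2\tensor{D}$. Forming the product $(\tensor{A}\boxtimes_2\tensor{B})_{[2]}^\intercal(\tensor{C}\boxtimes_2\tensor{D})_{[2]}$ sums over the common index $\overline{jr}$ (which ranges bijectively over $[J]\times[R]$), so its $(\overline{\ell_1 i_1},\overline{\ell_2 i_2})$ entry is $\sum_{j,r,k_1,k_2}\tensor{A}(i_1,j,k_1)\tensor{B}(k_1,r,\ell_1)\tensor{C}(i_2,j,k_2)\tensor{D}(k_2,r,\ell_2)$.

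Next I would unwind the right-hand side. Spelling out the transposed slices and \Cref{def:outer_product}, the two inner sums have entries $\bigl(\sum_r\mat{B}(r)^\intercal\circ\mat{D}(r)^\intercal\bigr)(\ell_1,k_1,\ell_2,k_2)=\sum_r\tensor{B}(k_1,r,\ell_1)\tensor{D}(k_2,r,\ell_2)$ and $\bigl(\sum_j\mat{A}(j)^\intercal\circ\mat{C}(j)^\intercal\bigr)(k_1,i_1,k_2,i_2)=\sum_j\tensor{A}(i_1,j,k_1)\tensor{C}(i_2,j,k_2)$, living in $\bb{R}^{L_1\times K_1\times L_2\times K_2}$ and $\bb{R}^{K_1\times I_1\times K_2\times I_2}$ respectively, which is exactly the shape required for the contraction $\times_{2,4}^{1,3}$ of \Cref{def:ct_product}. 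Performing that contraction (summing the matched modes $1,3$, i.e.\ summing over $k_1$ and $k_2$) yields a $4$th-order tensor of size $L_1\times I_1\times L_2\times I_2$ whose $(\ell_1,i_1,\ell_2,i_2)$ entry is the same quadruple sum as above; the final $<2>$-unfolding merely re-lists this tensor as an $L_1 I_1\times L_2 I_2$ matrix with row index $\overline{\ell_1 i_1}$ and column index $\overline{\ell_2 i_2}$, matching the left-hand side entry for entry.

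There is no real analytic content here — the identity is just a reordering of a finite sum — so the main obstacle is purely bookkeeping: making the orderings inside $\overline{jr}$, $\overline{\ell_1 i_1}$, $\overline{\ell_2 i_2}$ consistent with the conventions imposed by the $[2]$- and $<2>$-unfoldings, keeping the four transposes $\mat{A}(j)^\intercal,\mat{B}(r)^\intercal,\mat{C}(j)^\intercal,\mat{D}(r)^\intercal$ straight, and checking that the contracted modes ($1,3$) and the free modes ($2,4$) of $\times_{2,4}^{1,3}$ land in the intended slots. To keep such slips from going unnoticed I would fix and display all index ranges at the outset and track a single generic entry through each operation, rather than manipulating the unfoldings as whole matrices.
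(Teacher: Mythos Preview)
Your entrywise verification is correct: unwinding both sides to the common quadruple sum $\sum_{j,r,k_1,k_2}\tensor{A}(i_1,j,k_1)\tensor{B}(k_1,r,\ell_1)\tensor{C}(i_2,j,k_2)\tensor{D}(k_2,r,\ell_2)$ works exactly as you describe, and your tracking of the index orderings in the $[2]$- and $<2>$-unfoldings and of the contracted modes in $\times_{2,4}^{1,3}$ is right. Note that the paper does not supply its own proof of this proposition --- it is quoted from \cite{yu2022PracticalAlternating} --- so there is nothing to compare against here; your direct elementwise computation is in any case the canonical argument for an identity of this type.
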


\begin{algorithm}
\caption{TR-ALS-NE \cite{yu2022PracticalAlternating}}
\label{alg:tr_als_ne}
	\textbf{Input:} $\tensor{X} \in \bb{R}^{I_1 \times \cdots \times I_N}$, TR-ranks $R_1, \cdots, R_N$
	
	\textbf{Output:} TR-cores $\{ \tensor{G}_n \in \bb{R}^{R_n \times I_n \times R_{n+1}} \}_{n=1}^N$
	\begin{algorithmic}[1]
		\State Initialize TR-cores $\tensor{G}_{1}, \cdots, \tensor{G}_{N}$
		\State Compute $\tensor{Z}_{1} = \sum_{i_1 = 1}^{I_1} \mat{G}_{1}(i_1)^\intercal \circ \mat{G}_{1}(i_1)^\intercal, \cdots, \tensor{Z}_{N} = \sum_{i_N = 1}^{I_N} \mat{G}_{N}(i_N)^\intercal \circ \mat{G}_{N}(i_N)^\intercal$ 
		\Repeat
		\For{$n = 1,\cdots,N$}
		\State $\tensor{H}^{\ne n} \leftarrow \tensor{Z}_{n-1} \times_{2,4}^{1,3} \cdots \times_{2,4}^{1,3} \tensor{Z}_{1} \times_{2,4}^{1,3} \tensor{Z}_{N} \times_{2,4}^{1,3} \cdots \times_{2,4}^{1,3} \tensor{Z}_{n+1}$
		\State $\tensor{G}^{\ne n} \leftarrow \tensor{G}_{n+1} \boxtimes_2 \cdots \boxtimes_2 \tensor{G}_{N} \boxtimes_2 \tensor{G}_{1} \boxtimes_2 \cdots \boxtimes_2 \tensor{G}_{n-1}$ \Comment From \Cref{def:subchain_tensor,def:subchain_product}
		\State $\mat{M}_{n} \leftarrow \mat{X}_{[n]} \mat{G}_{[2]}^{\ne n}$ 
		\State Solve $\mat{G}_{n(2)} \mat{H}^{\ne n}_{<2>} = \mat{M}_{n}$ 
		\State Recompute $\tensor{Z}_{n} = \sum_{i_n = 1}^{I_n} \mat{G}_{n}(i_n)^\intercal \circ \mat{G}_{n}(i_n)^\intercal$ for the updated TR-core $\tensor{G}_{n}$
		\EndFor
		\Until termination criteria met
	\end{algorithmic}
\end{algorithm}

As mentioned in \Cref{sec:introduction}, randomized methods have been proposed for TR-ALS \cite{malik2021SamplingBasedMethod,malik2022MoreEfficient,yu2022PracticalSketchingBased}. Among them, the most relevant algorithms to this paper are TR-ALS-Sampled \cite{malik2021SamplingBasedMethod} and TR-KSRFT-ALS \cite{yu2022PracticalSketchingBased}. The sampling techniques of these two algorithms will be detailed in \Cref{alg:sstp-st} after introducing an additional definition.

\begin{definition} [Slices-Hadamard product \cite{yu2022PracticalSketchingBased}]
\label{def:hada_product}
    The \textbf{mode-2 slices-Hadamard product} of two tensors $\tensor{A}$ and $\tensor{B}$ is a tensor of size $I_1 \times J \times I_2$ denoted by $\tensor{A} \boxast_2 \tensor{B}$ and defined as
	\begin{equation*}
	(\tensor{A} \boxast_2 \tensor{B})(j) = \mat{A}(j)\mat{B}(j).
	\end{equation*}
\end{definition}

\begin{algorithm}
\caption{Sampled subchain and input tensors (SSIT), summarized from \cite{malik2021SamplingBasedMethod} and \cite{yu2022PracticalSketchingBased}}
\label{alg:sstp-st}
\textbf{Input:} TR-cores $\{\tensor{G}_k\in\bb{R}^{R_k \times I_k \times R_{k+1}}\}_{k=1,k\ne n}^N$, sampling size $m$, probability distributions $\{\vect{p}_k\}_{k=1,k\ne n}^N$

\textbf{Output:} sampled subchain tensor $\tensor{G}^{\ne n}_S$, sampled input tensor $\mat{X}_{S[n]}$

\begin{algorithmic}[1]
    \State $\texttt{idxs} \leftarrow \textsc{Zeros}(m, N-1)$
	\For{$k = n+1, \cdots, N, 1, \cdots, n-1$}
		\State $\texttt{idxs}(:,k) \leftarrow \textsc{Randsample}(I_k, m, true, \vect{p}_k)$
	\EndFor
	\State Let $\tensor{G}^{\ne n}_S$ be a tensor of size $R_{n+1} \times m \times R_{n+1}$, where every lateral slice is an $R_{n+1} \times R_{n+1}$ identity matrix
	\For{$k = n+1, \cdots, N, 1, \cdots, n-1$}
		\State $\tensor{G}^{\ne n}_S \leftarrow \tensor{G}^{\ne n}_S \boxast_2 \tensor{G}_{k}(:, \texttt{idxs}(:,k), :)$
	\EndFor
	\State $\mat{X}_{S[n]} \leftarrow \textsc{Mode-n-Unfolding}(\tensor{X}(\texttt{idxs}(:,1), \cdots, \texttt{idxs}(:,n-1),:,\texttt{idxs}(:,n+1),\cdots \texttt{idxs}(:,N)))$		
\end{algorithmic}
\end{algorithm}

\section{Proposed Methods}
\label{sec:proposed}
We first propose a streaming algorithm for tracking TR decomposition, and then present its randomized variant.
After that, three different sketching techniques based on uniform sampling, leverage-based sampling, and KSRFT, are  discussed.

\subsection{Streaming TR Decomposition}
\label{ssec:str}
Let $\tensor{X}^{old} \in \bb{R}^{I_1 \times \cdots \times I_{N-1} \times t^{old}}$ 
with the $N$-th mode being the time, and its TR decomposition be $\TR \left( \{\tensor{G}_n^{old}\}_{n=1}^N \right)$. Now assume that, at the \emph{time step} $\tau$, a \emph{temporal slice} $\tensor{X}^{new} \in \bb{R}^{I_1 \times \cdots \times I_{N-1} \times t^{new}}$ is added to $\tensor{X}^{old}$ to form a tensor $\tensor{X} \in \bb{R}^{I_1 \times \cdots \times I_{N-1} \times (t^{old} + t^{new}) }$, where $t^{old} \gg t^{new}$. We are interested in finding the TR decomposition $\TR \left( \{\tensor{G}_n\}_{n=1}^N \right)$ of $\tensor{X}$ with the help of $\TR \left( \{\tensor{G}_n^{old}\}_{n=1}^N \right)$ and the existing intermediate information.
In the following, we give the detailed updating formulations.

\paragraph{Update Temporal Mode}
We first consider the update for the TR-core of the temporal mode, i.e., $\tensor{G}_{N}$, by fixing the other TR-cores.
Specifically, by \eqref{eq:tr_als}, we have
\begin{align*}
\mat{G}_{N(2)} 
&\leftarrow \mathop{\arg\min}_{\mat{G}_{N(2)}} \frac{1}{2} \left\|\mat{X}_{[N]} - \mat{G}_{N(2)} (\mat{G}_{[2]}^{\ne N} )^\intercal \right\|_F \\
&= \mathop{\arg\min}_{\mat{G}_{N(2)}} \frac{1}{2} \left\| \begin{bmatrix} \mat{X}_{[N]}^{old} \\ \mat{X}_{[N]}^{new} \end{bmatrix} -  \begin{bmatrix} \mat{G}_{N(2)}^{(1)} \\ \mat{G}_{N(2)}^{(2)} \end{bmatrix} (\mat{G}_{[2]}^{\ne N})_{[2]}^\intercal \right\|_F.
\end{align*}
With \Cref{prop:subchain_gram}  and the fact from \cite{yu2022PracticalSketchingBased},
\begin{equation}
\label{eq:subchain_new}
	\tensor{G}^{\ne n} = \tensor{G}_{n+1} \boxtimes_2 \cdots \boxtimes_2 \tensor{G}_{N} \boxtimes_2 \tensor{G}_{1} \boxtimes_2 \cdots \boxtimes_2 \tensor{G}_{n-1},
\end{equation} 
it is clear that 
\begin{equation*}
\mat{G}_{N(2)} 
\leftarrow \begin{bmatrix} \mat{X}_{[N]}^{old} \mat{G}_{[2]}^{\ne N} \left( (\mat{G}_{[2]}^{\ne N})^\intercal \mat{G}_{[2]}^{\ne N} \right)^\dagger \\ \mat{X}_{[N]}^{new} \mat{G}_{[2]}^{\ne N} \left( (\mat{G}_{[2]}^{\ne N})^\intercal \mat{G}_{[2]}^{\ne N} \right)^\dagger \end{bmatrix} 
= \begin{bmatrix} \mat{G}_{N(2)}^{old} \\ \mat{X}_{[N]}^{new} \mat{G}_{[2]}^{\ne N} \left( \mat{H}^{\ne N}_{<2>} \right)^\dagger \end{bmatrix} 
= \begin{bmatrix} \mat{G}_{N(2)}^{old} \\ \mat{G}_{N(2)}^{new} \end{bmatrix},
\end{equation*}
where $\tensor{H}^{\ne N} = \tensor{Z}_{N-1} \times_{2,4}^{1,3} \cdots \times_{2,4}^{1,3} \tensor{Z}_{1}$ with $\tensor{Z}_{j} = \sum_{i_j=1}^{I_j} \mat{G}_{j}(i_j)^\intercal \circ \mat{G}_{j}(i_j)^\intercal$. 
Thus,
\begin{align*}
	&\mat{G}_{N(2)}^{new} \leftarrow \mat{X}_{[N]}^{new} \mat{G}_{[2]}^{\ne N} \left( \mat{H}^{\ne N}_{<2>} \right)^\dagger, \quad
	\mat{G}_{N(2)} \leftarrow \begin{bmatrix} \mat{G}_{N(2)}^{old} \\ \mat{G}_{N(2)}^{new} \end{bmatrix}.
\end{align*}

\paragraph{Update Non-temporal Modes}
For each non-temporal mode $n \in [N-1]$, we now consider the update of $\tensor{G}_{n} $ by fixing the remain TR-cores. Specifically,
according to \eqref{eq:tr_als}, we have the following normal equation
\begin{align}
\label{eq:partial}
0&=\underbrace{ \mat{X}_{[n]} \left( \underset{\substack{n+1,\cdots,N,\\1,\cdots,n-1}} {\boxtimes_2} \tensor{G}_{j} \right)_{[2]} }_{\mat{P}_{n}} - \mat{G}_{n(2)} \underbrace{ \left( \underset{\substack{n+1,\cdots,N,\\1,\cdots,n-1}} {\boxtimes_2} \tensor{G}_{j} \right)_{[2]}^\intercal \left( \underset{\substack{n+1,\cdots,N,\\1,\cdots,n-1}} {\boxtimes_2} \tensor{G}_{j} \right)_{[2]} }_{\mat{Q}_{n}}\nonumber\\
&=\begin{bmatrix} \mat{X}_{[n]}^{old} & \mat{X}_{[n]}^{new} \end{bmatrix} \begin{bmatrix} \left(\mat{G}^{\ne n}_{old} \right)_{[2]} \\ \left(\mat{G}^{\ne n}_{new} \right)_{[2]} \end{bmatrix} - \mat{G}_{n(2)} \begin{bmatrix} \left(\mat{G}^{\ne n}_{old} \right)_{[2]}^\intercal & \left(\mat{G}^{\ne n}_{new} \right)_{[2]}^\intercal \end{bmatrix}  \begin{bmatrix} \left(\mat{G}^{\ne n}_{old} \right)_{[2]} \\ \left(\mat{G}^{\ne n}_{new} \right)_{[2]} \end{bmatrix}\nonumber \\
&=\left(\mat{P}_{n}^{old} + \mat{X}_{[n]}^{new}  \left(\mat{G}^{\ne n}_{new} \right)_{[2]}\right) - \mat{G}_{n(2)} \left( \mat{Q}_{n}^{old} + \left( \mat{H}^{\ne n}_{new} \right)_{<2>} \right),
\end{align}
where 
\begin{align}
\tensor{G}^{\ne n}_{old} &= \left( \underset{n+1,\cdots,N-1} {\boxtimes_2} \tensor{G}_{j} \right) \boxtimes_2 \tensor{G}_{N}^{old} \boxtimes_2 \left( \underset{1,\cdots,n-1} {\boxtimes_2} \tensor{G}_{j} \right),\label{eq:partial1}\\
\tensor{G}^{\ne n}_{new} &= \left( \underset{n+1,\cdots,N-1} {\boxtimes_2} \tensor{G}_{j} \right) \boxtimes_2 \tensor{G}_{N}^{new} \boxtimes_2 \left( \underset{1,\cdots,n-1} {\boxtimes_2} \tensor{G}_{j} \right),\label{eq:partial2}\\
\tensor{H}^{\ne n}_{new} &= \left( \underset{n-1,\cdots,1} {\times_{2,4}^{1,3}} \tensor{Z}_{j} \right) \times_{2,4}^{1,3} \tensor{Z}_{N}^{new} \times_{2,4}^{1,3} \left( \underset{N-1,\cdots,n+1} {\times_{2,4}^{1,3}} \tensor{Z}_{j} \right)\nonumber
\end{align}
with
\begin{align*}
\tensor{Z}_{N}^{new} =\sum_{i_N=1}^{I_N} \mat{G}^{new}_{N}(i_N)^\intercal \circ \mat{G}^{new}_{N}(i_N)^\intercal.
\end{align*} 
Note that to derive \eqref{eq:partial}, \Cref{prop:subchain_gram}, \eqref{eq:subchain_new}, and the permutation matrix $\mat{\Pi}_{n}$ defined as 
\begin{equation*}
    \left(\mat{X}_{[n]} \mat{\Pi}_{n}\right)(:,\overline{i_1 \cdots i_{n-1} i_{n+1} \cdots i_N}) = \mat{X}_{[n]}(:,\overline{i_{n+1} \cdots i_N i_1 \cdots i_{n-1}})
\end{equation*}
such that 
{\small\begin{align*}
\mat{X}_{[n]} \mat{\Pi}_{n} \mat{\Pi}_{n}^\intercal\left( \underset{\substack{n+1,\cdots,N,\\1,\cdots,n-1}} {\boxtimes_2} \tensor{G}_{j} \right)_{[2]}&=\begin{bmatrix} \mat{X}_{[n]}^{old} & \mat{X}_{[n]}^{new} \end{bmatrix} \begin{bmatrix} \left(\mat{G}^{\ne n}_{old} \right)_{[2]} \\ \left(\mat{G}^{\ne n}_{new} \right)_{[2]} \end{bmatrix}\\
\left( \underset{\substack{n+1,\cdots,N,\\1,\cdots,n-1}} {\boxtimes_2} \tensor{G}_{j} \right)_{[2]}^\intercal \mat{\Pi}_{n} \mat{\Pi}_{n}^\intercal\left( \underset{\substack{n+1,\cdots,N,\\1,\cdots,n-1}} {\boxtimes_2} \tensor{G}_{j} \right)_{[2]}
&=\begin{bmatrix} \left(\mat{G}^{\ne n}_{old} \right)_{[2]}^\intercal & \left(\mat{G}^{\ne n}_{new} \right)_{[2]}^\intercal \end{bmatrix}  \begin{bmatrix} \left(\mat{G}^{\ne n}_{old} \right)_{[2]} \\ \left(\mat{G}^{\ne n}_{new} \right)_{[2]} \end{bmatrix}
\end{align*}}
have been used.
Thus, we achieve the update for $\tensor{G}_{n} $ as follows
\begin{align*}
 \mat{P}_{n} \leftarrow \mat{P}_{n}^{old} +  \mat{X}_{[n]}^{new}  &\left(\mat{G}^{\ne n}_{new} \right)_{[2]}, \quad
 \mat{Q}_{n} \leftarrow \mat{Q}_{n}^{old} + \left( \mat{H}^{\ne n}_{new} \right)_{<2>} ,\\
& \mat{G}_{n(2)} \leftarrow \mat{P}_{n} \mat{Q}_{n}^{\dagger}.
\end{align*}

The whole process for streaming TR decomposition is summarized in \Cref{alg:str}, from which we find that the information of previous decomposition can be stored in the complementary matrices $\mat{P}_{n}$ and $\mat{Q}_{n}$, and hence the expensive computation can be avoided and the TR-cores can be efficiently updated in an incremental way.

\begin{algorithm}
\caption{Streaming TR decomposition (STR)}
\label{alg:str}
\textbf{Input:} Initial tensor $\tensor{X}^{init}$, TR-ranks $R_1, \cdots, R_N$ and new data tensor $\tensor{X}^{new}$

\textbf{Output:} TR-cores $\{ \tensor{G}_n \in \bb{R}^{R_n \times I_n \times R_{n+1}} \}_{n=1}^N$
\begin{algorithmic}[1]
	\Statex // \texttt{Initialization stage}
	\State Compute initial TR-cores $\tensor{G}_{1}, \cdots, \tensor{G}_{N}$ of $\tensor{X}^{init}$ \label{line:str_initial}
	\State Compute the Gram tensors $\tensor{Z}_{1} = \sum_{i_1 = 1}^{I_1} \mat{G}_{1}(i_1)^\intercal \circ \mat{G}_{1}(i_1)^\intercal, \cdots, \tensor{Z}_{N-1} = \sum_{i_{N-1} = 1}^{I_{N-1}} \mat{G}_{N-1}(i_{N-1})^\intercal \circ \mat{G}_{N-1}(i_{N-1})^\intercal$ 
	\For{$n = 1, \cdots, N-1$}
	\State $\mat{P}_{n} \leftarrow \mat{X}_{[n]}^{init} \mat{G}^{\ne n}_{[2]}$
	\State $\mat{Q}_{n} \leftarrow \left( \underset{\substack{n-1,\cdots,1,\\N,\cdots,n+1}} {\times_{2,4}^{1,3}} \tensor{Z}_{j} \right)_{<2>}$
	\EndFor
	\For{$\tau = 1, \cdots, t$ time steps}
	\Statex // \texttt{Update stage for temporal mode}
        \State $\tensor{H}^{\ne N} \leftarrow \tensor{Z}_{N-1} \times_{2,4}^{1,3} \cdots \times_{2,4}^{1,3} \tensor{Z}_{1}$
	\State $\mat{G}_{N(2)}^{new} \leftarrow \mat{X}_{[N]}^{new} \mat{G}_{[2]}^{\ne N} \left( \mat{H}^{\ne N}_{<2>} \right)^\dagger$
	\State $\mat{G}_{N(2)} \leftarrow \begin{bmatrix} \mat{G}_{N(2)}^{old} \\ \mat{G}_{N(2)}^{new} \end{bmatrix}$ and reshape $\mat{G}_{N(2)}$ to $\tensor{G}_{N}$
	\State $\tensor{Z}_{N} \leftarrow \sum_{i_{N} = 1}^{I_{N}} \mat{G}_{N}(i_{N})^\intercal \circ \mat{G}_{N}(i_{N})^\intercal$
	\Statex // \texttt{Update stage for non-temporal modes}
	\For{$n = 1, \cdots, N-1$}
        \State $\tensor{G}^{\ne n}_{new} \leftarrow \left( \underset{n+1,\cdots,N-1} {\boxtimes_2} \tensor{G}_{j} \right) \boxtimes_2 \tensor{G}_{N}^{new} \boxtimes_2 \left( \underset{1,\cdots,n-1} {\boxtimes_2} \tensor{G}_{j} \right)$
	\State $\mat{P}_{n} \leftarrow \mat{P}_{n} +  \mat{X}_{[n]}^{new}  \left(\mat{G}^{\ne n}_{new} \right)_{[2]}$
	\State $\tensor{H}^{\ne n}_{new} = \left( \underset{n-1,\cdots,1} {\times_{2,4}^{1,3}} \tensor{Z}_{j} \right) \times_{2,4}^{1,3} \tensor{Z}_{N}^{new} \times_{2,4}^{1,3} \left( \underset{N-1,\cdots,n+1} {\times_{2,4}^{1,3}} \tensor{Z}_{j} \right)$
	\State $\mat{Q}_{n} \leftarrow \mat{Q}_{n} + \left( \mat{H}^{\ne n}_{new} \right)_{<2>}$
	\State $\mat{G}_{n(2)} \leftarrow \mat{P}_{n} \mat{Q}_{n}^{\dagger}$ and reshape $\mat{G}_{n(2)}$ to $\tensor{G}_{n}$
	\State $\tensor{Z}_{n} \leftarrow \sum_{i_{n} = 1}^{I_{n}} \mat{G}_{n}(i_{n})^\intercal \circ \mat{G}_{n}(i_{n})^\intercal$
	\EndFor
	\EndFor
\end{algorithmic}
\end{algorithm}

\begin{remark}
    With regard to the initialization of streaming TR decomposition, i.e., \Cref{line:str_initial} in \Cref{alg:str}, 
    we can choose any feasible techniques. 
    Inspired by 
    the experimental results in \cite[Section III.B]{ma2018RandomizedOnline}, we recommend running the corresponding offline version of \Cref{alg:str} for finding the initial values. 
    However, in the specific numerical experiments later in this paper, we use the same initial values for various algorithms for convenience; see the detailed description of  experiments in \Cref{sec:experiments}. The above explanation is also applicable to \Cref{alg:rstr} below.
\end{remark}

\begin{remark}
From the derivation of \Cref{alg:str}, it can be seen that the structure of the coefficient matrices in the subproblems is well used. That is, \Cref{prop:subchain_gram} is employed to reduce the computational cost. 
Hence, the algorithm is more efficient than applying TR-ALS directly. 
More descriptions and comparisons on advantages for using \Cref{prop:subchain_gram} can be found in \cite{yu2022PracticalAlternating}.
\end{remark}

\begin{remark}
As we know, TR decomposition generalizes the famous TT decomposition by relaxing some constraints  \cite{zhao2016TensorRing}. So, with a slight change, \Cref{alg:str} is also applicable to TT decomposition. It is worthy to emphasize that the corresponding method is very different from the ones in \cite{liu2021IncrementalTensorTrain,thanh2021AdaptiveAlgorithms,kressner2022StreamingTensor} mentioned in \Cref{sec:introduction}.  
The main difference still lies in that we make full use of the structure introduced before. 
\end{remark}

\subsection{Randomized Streaming TR Decomposition}
\label{ssec:rstr}
We now employ randomized sketching techniques to improve the efficiency of streaming TR decomposition. That is, we consider the following sketched subproblems for streaming tensor with the sketching matrices $\mat{\Psi}_{n} \in \bb{R}^{m \times \prod_{j \ne N}I_j}$,
\begin{equation}
\label{eq:rtr_als}
\mathop{\arg\min}_{\mat{G}_{n(2)}} \left\|\mat{\Psi}_n \mat{G}_{[2]}^{\ne n} \mat{G}_{n(2)}^\intercal-\mat{\Psi}_n \mat{X}_{[n]}^\intercal \right\|_F,\ n=1,\cdots, N.
\end{equation}
A randomized streaming TR decomposition will be proposed.  
In the following, we give the specific updating rules. 

\paragraph{Update Temporal Mode}
By dividing the corresponding terms into two parts, from \eqref{eq:rtr_als}, we have 
\begin{align*}
\mat{G}_{N(2)} 
&\leftarrow 
\mathop{\arg\min}_{\mat{G}_{N(2)}} \frac{1}{2} \left\| \begin{bmatrix} \mat{X}_{[N]}^{old} (\mat{\Psi}_{N}^{old})^\intercal \\ \mat{X}_{[N]}^{new} (\mat{\Psi}_{N}^{new})^\intercal \end{bmatrix}  -  \begin{bmatrix} \mat{G}_{N(2)}^{(1)} \left( \mat{\Psi}_{N}^{old} \mat{G}_{[2]}^{\ne N} \right)^\intercal \\ \mat{G}_{N(2)}^{(2)} \left( \mat{\Psi}_{N}^{new} \mat{G}_{[2]}^{\ne N} \right)^\intercal \end{bmatrix} \right\|_F,
\end{align*}
where $\mat{\Psi}_{N}^{old} \in \bb{R}^{m \times \prod_{j \ne N}I_j}$ and $\mat{\Psi}_{N}^{new} \in \bb{R}^{m \times \prod_{j \ne N}I_j}$.
It is clear that 
\begin{align*}
\mat{G}_{N(2)} 
&\leftarrow \begin{bmatrix} \mat{X}_{[N]}^{old} (\mat{\Psi}_{N}^{old})^\intercal \left( \left( \mat{\Psi}_{N}^{old} \mat{G}_{[2]}^{\ne N} \right)^\intercal \right)^\dagger \\ \mat{X}_{[N]}^{new} (\mat{\Psi}_{N}^{new})^\intercal \left( \left( \mat{\Psi}_{N}^{new} \mat{G}_{[2]}^{\ne N} \right)^\intercal \right)^\dagger  \end{bmatrix} \\
&= \begin{bmatrix} \mat{G}_{N(2)}^{old} \\ \mat{X}_{[N]}^{new} (\mat{\Psi}_{N}^{new})^\intercal \left( \left( \mat{\Psi}_{N}^{new} \mat{G}_{[2]}^{\ne N} \right)^\intercal \right)^\dagger  \end{bmatrix} 
= \begin{bmatrix} \mat{G}_{N(2)}^{old} \\ \mat{G}_{N(2)}^{new} \end{bmatrix}.
\end{align*}
Thus,
\begin{align*}
	&\mat{G}_{N(2)}^{new} \leftarrow \mat{X}_{[N]}^{new} (\mat{\Psi}_{N}^{new})^\intercal \left( \left( \mat{\Psi}_{N}^{new} \mat{G}_{[2]}^{\ne N} \right)^\intercal \right)^\dagger, \quad
	\mat{G}_{N(2)} \leftarrow \begin{bmatrix} \mat{G}_{N(2)}^{old} \\ \mat{G}_{N(2)}^{new} \end{bmatrix}.
\end{align*}

\begin{remark}
    Usually, the sketching matrix $\mat{\Psi}_{N}^{new}$ is not the same as $\mat{\Psi}_{N}^{old}$, which implies that $\mat{X}_{[N]}^{new}$ will be 
    sketched at each time step.
\end{remark}

\paragraph{Update Non-temporal Modes}
As done for streaming TR decomposition in \Cref{ssec:str} and similar to the above deduction, we have
\begin{align*}
& \mat{P}_{n} \leftarrow \mat{P}_{n}^{old} +  \mat{X}_{[n]}^{new}  (\mat{\Psi}_{n}^{new})^\intercal \mat{\Psi}_{n}^{new} \left(\mat{G}^{\ne n}_{new} \right)_{[2]}, \\
& \mat{Q}_{n} \leftarrow \mat{Q}_{n}^{old} + \left(\mat{G}^{\ne n}_{new} \right)_{[2]}^\intercal (\mat{\Psi}_{n}^{new})^\intercal \mat{\Psi}_{n}^{new} \left(\mat{G}^{\ne n}_{new} \right)_{[2]}, \\
& \mat{G}_{n(2)} \leftarrow \mat{P}_{n} \mat{Q}_{n}^{\dagger},
\end{align*}
where $\mat{\Psi}_{n}^{old} \in \bb{R}^{m \times \prod_{j \ne n}I_j}$ and $\mat{\Psi}_{n}^{new} \in \bb{R}^{m \times \prod_{j \ne n}I_j}$ are sketching matrices and the other notations are the same as the ones in \eqref{eq:partial}, \eqref{eq:partial1}, and \eqref{eq:partial2}.

\begin{remark}
Unlike the case for streaming TR decomposition in \Cref{ssec:str}, 
here the calculation of the Gram matrix $\left(\mat{G}^{\ne n}_{new} \right)_{[2]}^\intercal (\mat{\Psi}_{n}^{new})^\intercal \mat{\Psi}_{n}^{new} \left(\mat{G}^{\ne n}_{new} \right)_{[2]}$ is quite cheap. So, we do not consider its structure any more though it still exists. Instead, we mainly focus on how to compute  
$\mat{\Psi}_{n}^{new} \left(\mat{G}^{\ne n}_{new} \right)_{[2]}$ fast by using the structure of $\left(\mat{G}^{\ne n}_{new} \right)_{[2]}$ and choosing suitable 
$\mat{\Psi}_{n}^{new}$; see 
\Cref{ssec:sketching} below.	
\end{remark}

The whole process for randomized streaming TR decomposition is summarized in \Cref{alg:rstr}, which shows that, as carried out by \Cref{alg:sstp-st} or \Cref{alg:sketchST}, different sketching techniques can be used to compute the sketched subchain and input tensors. Moreover, when forming the aforementioned sketched tensors, the un-updated TR-cores and fibers do not need to be sketched again. The corresponding detailed algorithms are presented in \Cref{SUPP:alg}. Note that, in this case, the theoretical guarantees given in \Cref{ssec:sketching} still apply.

\begin{algorithm}
\caption{Randomized streaming TR decomposition (rSTR)}
\label{alg:rstr}
\textbf{Input:} Initial tensor $\tensor{X}^{init}$, TR-ranks $R_1, \cdots, R_N$, new data tensor $\tensor{X}^{new}$ and sketch size $m$

\textbf{Output:} TR-cores $\{ \tensor{G}_n \in \bb{R}^{R_n \times I_n \times R_{n+1}} \}_{n=1}^N$
\begin{algorithmic}[1]
	\Statex // \texttt{Initialization stage}
	\State Compute initial TR-cores $\tensor{G}_{1}, \cdots, \tensor{G}_{N}$ of $\tensor{X}^{init}$ \label{line:rand-init}
	\For{$n = 1, \cdots, N-1$}
	\State Compute $\tensor{G}_{S}^{\ne n}$ and $\mat{X}_{S[n]}^{init}$ using \Cref{alg:sstp-st} or \Cref{alg:sketchST}
	\State $\mat{P}_{n} \leftarrow \mat{X}_{S[n]}^{init} \mat{G}^{\ne n}_{S[2]}$
	\State $\mat{Q}_{n} \leftarrow \left( \mat{G}^{\ne n}_{S[2]} \right)^\intercal \mat{G}^{\ne n}_{S[2]}$
	\EndFor
	\For{$\tau = 1, \cdots, t$ time steps}
	\Statex // \texttt{Update stage for temporal mode}
	\State Compute $\tensor{G}_{S}^{\ne N}$ and $\mat{X}_{S[N]}^{new}$ using \Cref{alg:sstp-st} or \Cref{alg:sketchST}
	\State $\mat{G}_{N(2)}^{new} \leftarrow \mat{X}_{S[N]}^{new} \left( \left( \mat{G}_{S[2]}^{\ne N} \right)^\intercal \right)^\dagger$
	\State $\mat{G}_{N(2)} \leftarrow \begin{bmatrix} \mat{G}_{N(2)}^{old} \\ \mat{G}_{N(2)}^{new} \end{bmatrix}$ and reshape $\mat{G}_{N(2)}$ to $\tensor{G}_{N}$
	\Statex // \texttt{Update stage for non-temporal modes}
	\For{$n = 1, \cdots, N-1$}
        \State $\tensor{G}^{\ne n}_{new} \leftarrow \left( \underset{n+1,\cdots,N-1} {\boxtimes_2} \tensor{G}_{j} \right) \boxtimes_2 \tensor{G}_{N}^{new} \boxtimes_2 \left( \underset{1,\cdots,n-1} {\boxtimes_2} \tensor{G}_{j} \right)$
	\State Compute $\left(\tensor{G}_{new}^{\ne n}\right)_{S}$ and $\mat{X}_{S[n]}^{new}$ using \Cref{alg:sstp-st} or \Cref{alg:sketchST}
	\State $\mat{P}_{n} \leftarrow \mat{P}_{n} +  \mat{X}_{S[n]}^{new}  \left(\mat{G}^{\ne n}_{new} \right)_{S[2]}$
	\State $\mat{Q}_{n} \leftarrow \mat{Q}_{n} + \left(\mat{G}^{\ne n}_{new} \right)_{S[2]}^\intercal \left(\mat{G}^{\ne n}_{new} \right)_{S[2]}$
	\State $\mat{G}_{n(2)} \leftarrow \mat{P}_{n} \mat{Q}_{n}^{\dagger}$ and reshape $\mat{G}_{n(2)}$ to $\tensor{G}_{n}$
	\EndFor
	\EndFor
\end{algorithmic}
\end{algorithm}


\subsection{Different Sketching Techniques}
\label{ssec:sketching}
We mainly consider three practical sketching techniques: Uniform sampling, leverage-based sampling, and KSRFT.

\paragraph{Uniform Sampling}
That is, 
\begin{equation}
\label{eq:sampDS}
	\mat{\Psi}_{n} = \mat{D}_{n} \mat{S}_{n},
\end{equation}
where $\mat{S}_{n} \in \bb{R}^{m \times J_n}$ with
$$J_n=
\begin{cases}
	I_{1} I_{2} \cdots I_{N-1}, & n=N \\
	I_{1} I_{2} \cdots I_{n-1} I_{n+1} \cdots I_{N-1} t^{new}, & n \ne N
\end{cases}
$$
is a sampling matrix, 
i.e.,
\begin{equation*}
(\mat{S}_{n})_{ij}=
\begin{cases}
	1, & \text{if the $j$-th row is chosen in the $i$-th independent random trial} \\
     & \text{with the probability $1/J_n$,}\\
	0, & \text{otherwise,}
\end{cases}
\end{equation*}
and $\mat{D}_{n} \in \bb{R}^{m \times m}$ is a diagonal rescaling matrix with the $i$-th diagonal entry $(\mat{D}_{n})_{ii} = \sqrt{J_n/m}$. In practice, the rescaling matrix  can be ignored without affecting the performance of algorithms. 
Furthermore, the sampling can be carried out in TR-cores as done in \Cref{alg:sstp-st}.
The detailed algorithm is summarized in \Cref{alg:rstr_uni} in \Cref{SUPP:alg} and the theoretical guarantee is as follows.

\begin{theorem}
\label{thm:uni}
Let $\mat{\Psi}_n$ be a uniform sampling matrix as defined above and 
$$\tilde{\mat{G}}_{n(2)} \defeq \arg\min_{\mat{G}_{n(2)}} \left\| \mat{X}_{[n]} \mat{\Psi}_n^\intercal - \mat{G}_{n(2)}(\mat{\Psi}_n\mat{G}_{[2]}^{\ne n})^\intercal \right\|_F.$$ 
If 
\begin{equation*}
	m \geq \left( \frac{2 \gamma R_{n} R_{n+1}}{\varepsilon} \right) \max \left[ \frac{48}{\varepsilon}\ln \left( \frac{96 \gamma R_{n} R_{n+1}}{ \varepsilon^2 \sqrt{\delta}} \right), \frac{1}{\delta} \right]  
\end{equation*}
with $\varepsilon \in (0,1)$, $\delta \in (0,1)$, and $\gamma>1$, then the following inequality holds with a probability of at least $1-\delta$:
\begin{equation*}
	\left\| \mat{X}_{[n]}  - \tilde{\mat{G}}_{n(2)}(\mat{G}_{[2]}^{\ne n})^\intercal \right\|_F \leq (1+\varepsilon) \min_{\mat{G}_{n(2)}} \left\| \mat{X}_{[n]} - \mat{G}_{n(2)}(\mat{G}_{[2]}^{\ne n})^\intercal \right\|_F.
\end{equation*}
\end{theorem}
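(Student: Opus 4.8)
The plan is to reduce the statement to the standard theory of randomized least squares solved by row sampling, treating uniform sampling as leverage-score sampling with misestimated scores, where the mismatch between the two is exactly what the parameter $\gamma$ measures. Write $\mat{A} \defeq \mat{G}_{[2]}^{\ne n} \in \bb{R}^{J_n \times R_n R_{n+1}}$ and $\mat{B} \defeq \mat{X}_{[n]}^\intercal$, so the (unsketched) subproblem is $\min_{\mat{Y}} \| \mat{A}\mat{Y} - \mat{B} \|_F$ with $\mat{Y} = \mat{G}_{n(2)}^\intercal$, and $\tilde{\mat{G}}_{n(2)}^\intercal$ is the solution of the sketched problem $\min_{\mat{Y}} \| \mat{\Psi}_n(\mat{A}\mat{Y} - \mat{B}) \|_F$; this is precisely a single sketch-and-solve least-squares instance, so the guarantee is the same as in the batch case. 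Let $\mat{A} = \mat{U}\mat{\Sigma}\mat{V}^\intercal$ be a thin SVD with $\mat{U} \in \bb{R}^{J_n \times \rho}$, $\rho = \rank(\mat{A}) \le R_n R_{n+1}$, let $\ell_i \defeq \| \mat{U}(i,:) \|_2^2$ be the leverage scores (so $\sum_i \ell_i = \rho$), and let $\mat{B}^{\perp} \defeq (\mat{I} - \mat{U}\mat{U}^\intercal)\mat{B}$ be the optimal residual. The first observation is that the uniform probabilities $p_i = 1/J_n$ satisfy $p_i \ge (1/\gamma)\,\ell_i/\rho$ for every $i$ exactly when $\max_i \ell_i \le \gamma\rho/J_n$; this coherence-type condition is what the hypothesis $\gamma > 1$ encodes, and $\gamma\rho/J_n$ (hence $\gamma R_n R_{n+1}/J_n$) is the quantity that drives every concentration bound below.

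Next I would invoke the two standard structural conditions sufficient for a $(1+\varepsilon)$-approximate solution: (i) the subspace-embedding condition $\| \mat{U}^\intercal\mat{\Psi}_n^\intercal\mat{\Psi}_n\mat{U} - \mat{I}_\rho \|_2 \le 1/\sqrt{2}$, and (ii) the residual condition $\| \mat{U}^\intercal\mat{\Psi}_n^\intercal\mat{\Psi}_n\mat{B}^{\perp} \|_F^2 \le (\varepsilon/2)\,\| \mat{B}^{\perp} \|_F^2$. Granting both, comparing the normal equations of the sketched and unsketched problems and using that $\mat{A}\tilde{\mat{G}}_{n(2)}^\intercal - \mat{B}$ decomposes as $\mat{B}^{\perp}$ plus a term in $\range(\mat{A})$ gives $\| \mat{A}\tilde{\mat{G}}_{n(2)}^\intercal - \mat{B} \|_F^2 \le (1+\varepsilon)\| \mat{B}^{\perp} \|_F^2$, and taking square roots with $\sqrt{1+\varepsilon}\le 1+\varepsilon$ yields the claimed inequality. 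I would cite the corresponding lemma from the randomized least-squares literature rather than reprove it.

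The substantive part is showing that the stated $m$ forces (i) and (ii) to hold simultaneously with probability at least $1-\delta$, which I would get by allotting failure probability $\delta/2$ to each. For (i), write $\mat{U}^\intercal\mat{\Psi}_n^\intercal\mat{\Psi}_n\mat{U} = \sum_{t=1}^{m} \frac{J_n}{m}\,\mat{U}(j_t,:)^\intercal\mat{U}(j_t,:)$, a sum of i.i.d.\ PSD rank-one matrices with mean $\mat{I}_\rho$, each of spectral norm at most $\frac{J_n}{m}\max_i \ell_i \le \frac{\gamma\rho}{m}$; a matrix Chernoff bound then gives $\| \mat{U}^\intercal\mat{\Psi}_n^\intercal\mat{\Psi}_n\mat{U} - \mat{I}_\rho \|_2 \le 1/\sqrt{2}$ with probability $\ge 1-\delta/2$ once $m$ exceeds a constant times $\gamma\rho\,\varepsilon^{-1}\ln(\rho/(\varepsilon^2\sqrt{\delta}))$ — this, after replacing $\rho$ by the upper bound $R_nR_{n+1}$, is the $\tfrac{48}{\varepsilon}\ln\!\big(\tfrac{96\gamma R_nR_{n+1}}{\varepsilon^2\sqrt{\delta}}\big)$ branch of the $\max$. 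For (ii), since $\mat{U}^\intercal\mat{B}^{\perp}=0$ the rescaled uniform estimator is unbiased, and $\bb{E}\,\| \mat{U}^\intercal\mat{\Psi}_n^\intercal\mat{\Psi}_n\mat{B}^{\perp} \|_F^2 \le \frac{J_n}{m}\sum_i \ell_i\| \mat{B}^{\perp}(i,:) \|_2^2 \le \frac{\gamma\rho}{m}\| \mat{B}^{\perp} \|_F^2$, so Markov's inequality delivers (ii) with probability $\ge 1-\delta/2$ as soon as $m \ge 2\gamma\rho/(\varepsilon\delta)$, which is the $1/\delta$ branch with prefactor $2\gamma R_nR_{n+1}/\varepsilon$ matching the statement. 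Combining the two requirements under a single $\max$ gives exactly the stated bound on $m$.

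I expect the main obstacle to be bookkeeping rather than anything conceptual: pinning down the numerical constants in the matrix-Chernoff tail and in the Markov step so that both requirements fit under the single expression $\big(2\gamma R_nR_{n+1}/\varepsilon\big)\max[\,\cdot\,,\,\cdot\,]$, correctly handling the matrix (rather than vector) right-hand side in both structural conditions, and arranging every estimate to degrade gracefully from $\rho$ to its upper bound $R_nR_{n+1}$ so that no rank assumption on $\mat{G}_{[2]}^{\ne n}$ is needed. One further point I would state explicitly: the conclusion is implicitly conditional on the coherence bound $\max_i \ell_i(\mat{G}_{[2]}^{\ne n}) \le \gamma R_nR_{n+1}/J_n$ built into the choice of $\gamma$ — without such control uniform sampling cannot be guaranteed, and $\gamma$ is precisely the overhead one pays for not computing the leverage scores.
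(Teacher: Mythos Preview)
Your proposal is correct and follows essentially the same route as the paper: interpret uniform sampling as leverage-score sampling with misestimation factor $\beta = 1/\gamma$, then verify the two structural conditions (subspace embedding via matrix concentration, residual control via an expectation bound plus Markov) and invoke \Cref{SUPP:lemma:structural-conds}. The paper cites \Cref{SUPP:lemma:approx-cur} and \Cref{SUPP:lemma:approx-mm} as black boxes where you sketch the underlying matrix-Chernoff and variance computations directly, and it additionally appends a short streaming argument (combining the per-time-step guarantees) that is not strictly needed for the theorem as stated.
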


\begin{remark}
The $\gamma$ in \Cref{thm:uni} determines the maximum value of the row norms of the left singular vector matrix of the coefficient matrix $\mat{G}_{[2]}^{\ne n}$ (see \Cref{SUPP:thm:uni-offline}). 
It can be seen that the more inhomogeneous the coefficient matrix is, the larger the $\gamma$ is, which leads to less effective for uniform sampling. In this case, the importance sampling is a more reasonable choice. 
\end{remark}

\paragraph{Leverage-based Sampling}
Two definitions are first introduced.
\begin{definition}[Leverage Scores \cite{drineas2012FastApproximation}]
\label{def:leverages_scores}
	Let $\mat{A}\in \bb{R}^{m\times n}$ with $m > n$, and
	let $\mat{Q} \in \bb{R}^{m \times n}$ be any orthogonal basis for the column space of $\mat{A}$.
	The \textbf{leverage score} of the $i$-th row of $\mat{A}$ is given by
	\begin{equation*}
		\ell_i(\mat{A}) = \|\mat{Q}(i,:)\|_2^2.
	\end{equation*}
\end{definition}

\begin{definition}[Leverage-based Probability Distribution \cite{woodruff2014SketchingTool}]
\label{def:leverages_scores_sampling}
	Let $\mat{A}\in \bb{R}^{m\times n}$ with $m > n$.
	We say  a probability distribution $\vect{q}=[q_1, \cdots, q_{m} ]^\intercal$ is a \textbf{leverage-based probability distribution} for $\mat{A}$ on $[m]$ if $q_i \ge \beta p_i$ with $p_i=\frac{\ell_i(\mat{A})}{n}$, $0<\beta \le 1$ and $\forall i \in [m]$.
\end{definition}

Computing the leverage scores of $\mat{G}^{\ne n}_{[2]} \in \bb{R}^{J_n \times R_{n}R_{n+1}}$ directly is expensive. 
Fortunately, by \cite{malik2021SamplingBasedMethod}, we can estimate them using the leverage scores related to the TR-cores $\tensor{G}_{1}, \cdots, \tensor{G}_{n-1},\tensor{G}_{n+1}, \cdots, \tensor{G}_{N}$. 

\begin{lemma}[\cite{malik2021SamplingBasedMethod}]
\label{lem:lev_est}
	For each $n \in [N]$, let $\vect{p}_n \in \bb{R}^{I_n}$ be a probability distribution on $[I_n]$ defined element-wise via
	\begin{equation*}
		\vect{p}_n(i_n) = \frac{\ell_{i_n}(\mat{G}_{n(2)})}{\rank(\mat{G}_{n(2)}))},
	\end{equation*}
	$\vect{p}^{\ne n}$ be a probability distribution on $\left[J_n\right]$ defined element-wise via
	\begin{equation*}
		\vect{p}^{\ne n}(i) = \frac{\ell_{i}(\mat{G}^{\ne n}_{[2]})}{\rank(\mat{G}^{\ne n}_{[2]})},
	\end{equation*}
	$\vect{q}^{\ne n}$ be a vector defined element-wise via
	\begin{equation*}
		\vect{q}^{\ne n}(\overline{i_{n+1} \cdots i_{N} i_{1} \cdots i_{n-1}}) = \prod_{\substack{m=1\\m \ne n}}^{N} \vect{p}_m(i_m),
	\end{equation*} 
	and $\beta_n$ be a constant as in \Cref{def:leverages_scores_sampling} defined as 
	\begin{equation*}
		\beta_{n} = \left( R_{n}R_{n+1} \prod_{\substack{m=1\\m\notin \{n, n+1\}}}^{N} R_{m}^2 \right)^{-1}.
	\end{equation*}
	Then for each $n \in [N]$, $\vect{q}^{\ne n}(i) \geq \beta_n \vect{p}^{\ne n}(i)$ for all $i = \overline{i_{n+1} \cdots i_{N} i_{1} \cdots i_{n-1}} \in \left[J_n\right]$ and hence $\vect{q}^{\ne n}$ is the leverage-based probability distribution for $\mat{G}^{\ne n}_{[2]}$ on $\left[J_n\right]$.
\end{lemma}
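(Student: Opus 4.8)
The plan is to follow the strategy of \cite{malik2021SamplingBasedMethod}: express the large subchain unfolding $\mat{G}^{\ne n}_{[2]}$ through a Kronecker product of the individual core unfoldings $\mat{G}_{m(2)}$ ($m\ne n$), bound its row leverage scores by products of the leverage scores of the $\mat{G}_{m(2)}$, and then carry out the rank arithmetic that yields $\beta_n$. Throughout, write $i=\overline{i_{n+1}\cdots i_N i_1\cdots i_{n-1}}$ for a generic row index of $\mat{G}^{\ne n}_{[2]}$, and recall the identity $\sum_j\ell_j(\mat{A})=\rank(\mat{A})$, which guarantees that $\vect{p}_m$, $\vect{p}^{\ne n}$ and $\vect{q}^{\ne n}$ are genuine probability distributions.

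\textbf{Step 1 (Kronecker embedding).} I would first show that there is a fixed $0/1$ matrix $\mat{T}$, depending only on the TR-ranks, such that
\[
\mat{G}^{\ne n}_{[2]} \;=\; \bigl( \mat{G}_{(n+1)(2)} \otimes \mat{G}_{(n+2)(2)} \otimes \cdots \otimes \mat{G}_{(n-1)(2)} \bigr)\,\mat{T}.
\]
By \Cref{def:subchain_tensor}, the $i$-th row of $\mat{G}^{\ne n}_{[2]}$ is the vectorization of the matrix-chain product $\prod_{j=n+1}^{N}\mat{G}_j(i_j)\prod_{j=1}^{n-1}\mat{G}_j(i_j)$; writing out a single entry of such a product exhibits it as a fixed multilinear form in the slices $\mat{G}_m(i_m)$, hence as a fixed linear image of $\bigotimes_{m\ne n}\operatorname{vec}(\mat{G}_m(i_m))=\bigotimes_{m\ne n}\mat{G}_{m(2)}(i_m,:)$. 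Stacking over all multi-indices gives the displayed factorization, up to a fixed permutation of the columns of the Kronecker product that plays no role in what follows.

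\textbf{Step 2 (leverage-score bound).} Two elementary facts then finish this part. (i) If $\mat{A}=\mat{B}\mat{T}$, then $\operatorname{col}(\mat{A})\subseteq\operatorname{col}(\mat{B})$, so the orthogonal projectors satisfy $\mat{P}_{\mat{A}}\preceq\mat{P}_{\mat{B}}$ and hence $\ell_j(\mat{A})\le\ell_j(\mat{B})$ for every $j$. (ii) If $\mat{U}_m$ is an orthonormal basis for $\operatorname{col}(\mat{G}_{m(2)})$, then $\bigotimes_{m\ne n}\mat{U}_m$ is an orthonormal basis for the column space of the Kronecker product, and its rows are Kronecker products of the rows of the $\mat{U}_m$; squaring Euclidean norms yields $\ell_i\bigl(\bigotimes_{m\ne n}\mat{G}_{m(2)}\bigr)=\prod_{m\ne n}\ell_{i_m}(\mat{G}_{m(2)})$. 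Combining Step 1 with (i) and (ii) gives $\ell_i(\mat{G}^{\ne n}_{[2]})\le\prod_{m\ne n}\ell_{i_m}(\mat{G}_{m(2)})$.

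\textbf{Step 3 (rank arithmetic) and the main obstacle.} From the definitions, $\vect{q}^{\ne n}(i)=\prod_{m\ne n}\vect{p}_m(i_m)=\bigl(\prod_{m\ne n}\ell_{i_m}(\mat{G}_{m(2)})\bigr)\big/\bigl(\prod_{m\ne n}\rank(\mat{G}_{m(2)})\bigr)$. Bounding the numerator by Step 2, using $\rank(\mat{G}_{m(2)})\le R_m R_{m+1}$ in the denominator, the short computation $\prod_{m\ne n}R_m R_{m+1}\le R_n R_{n+1}\prod_{m\notin\{n,n+1\}}R_m^2=\beta_n^{-1}$ (which uses $R_{N+1}=R_1$ and is an equality for $n<N$), and $\rank(\mat{G}^{\ne n}_{[2]})\ge 1$, I obtain
\[
\vect{q}^{\ne n}(i) \;\ge\; \beta_n\,\ell_i(\mat{G}^{\ne n}_{[2]}) \;=\; \beta_n\,\rank(\mat{G}^{\ne n}_{[2]})\,\vect{p}^{\ne n}(i) \;\ge\; \beta_n\,\vect{p}^{\ne n}(i),
\]
which by \Cref{def:leverages_scores_sampling} is exactly the assertion that $\vect{q}^{\ne n}$ is a leverage-based probability distribution for $\mat{G}^{\ne n}_{[2]}$ on $[J_n]$. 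I expect the main obstacle to be Step 1: verifying cleanly that the matrix-chain product is a genuine fixed multilinear map of its factor slices, and keeping track of the index orderings relating the mode-$2$ unfolding of the subchain, the Kronecker product, and the reshaping matrix $\mat{T}$. Everything afterwards is routine, and the precise form of $\mat{T}$ never matters because leverage scores are invariant under column permutations.
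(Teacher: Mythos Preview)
The paper does not actually prove \Cref{lem:lev_est}; it is quoted verbatim from \cite{malik2021SamplingBasedMethod} and simply cited as a known result, with no argument given in the text or the appendix. So there is no ``paper's own proof'' to compare against.

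That said, your proposal is correct and reproduces the argument of \cite{malik2021SamplingBasedMethod}. The three steps---Kronecker embedding of $\mat{G}^{\ne n}_{[2]}$ via a fixed column map $\mat{T}$, the column-space monotonicity $\ell_j(\mat{B}\mat{T})\le\ell_j(\mat{B})$ together with the multiplicativity of leverage scores under Kronecker products, and the rank bookkeeping---are exactly how the original proof proceeds. One small remark on Step~3: the identity $\prod_{m\ne n}R_mR_{m+1}=R_nR_{n+1}\prod_{m\notin\{n,n+1\}}R_m^2$ is in fact an equality for \emph{all} $n$, including $n=N$, once the cyclic convention $R_{N+1}=R_1$ is used consistently (so that the exclusion set $\{n,n+1\}$ is read modulo $N$); your hedge ``an equality for $n<N$'' is unnecessary. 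Your self-identified obstacle in Step~1 is real but modest: one clean way to see the factorization is to note that each entry of $\mat{G}^{\ne n}(\overline{i_{n+1}\cdots i_{n-1}})$ is a sum over intermediate TR-indices of a product of slice entries, which is precisely a fixed linear functional of $\bigotimes_{m\ne n}\operatorname{vec}(\mat{G}_m(i_m))$, independent of the particular $(i_m)$.
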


With this lemma, we can define the sampling matrix in \eqref{eq:sampDS} as follows:
\begin{equation*}
(\mat{S}_{n})_{ij}=
\begin{cases}
	1, & \text{if the $j$-th row is chosen in the $i$-th independent random trial} \\
     & \text{ with the probability $\vect{q}^{\ne n}(j)$,}\\
	0, & \text{otherwise,}
\end{cases}
\end{equation*}
and the $i$-th diagonal entry of the diagonal rescaling matrix $\mat{D}_{n}$ in \eqref{eq:sampDS} is now $(\mat{D}_{n})_{ii} = 1/\sqrt{m \vect{q}^{\ne n}(j)}$. As above, the rescaling matrix can be ignored and the sampling can be carried out in TR-cores as done in \Cref{alg:sstp-st}. 
The detailed algorithm is summarized in \Cref{alg:rstr_lev} in \Cref{SUPP:alg} and the theoretical guarantee is given in the following.
\begin{theorem}
\label{thm:lev}	
Let $\mat{\Psi}_{n}$ be a leveraged-based sampling matrix as defined above and 
\begin{equation*}
    \tilde{\mat{G}}_{n(2)} \defeq \arg\min_{\mat{G}_{n(2)}} \left\| \mat{X}_{[n]} \mat{\Psi}_{n}^\intercal - \mat{G}_{n(2)}(\mat{\Psi}_{n}\mat{G}_{[2]}^{\ne n})^\intercal \right\|_F.
\end{equation*}
If 
\begin{equation*}
	m > \left( \prod_{j=1}^{N} R_j^2 \right) \max \left[ \frac{16}{3(\sqrt{2}-1)^2} \ln \left( \frac{4R_{n} R_{n+1}}{\delta} \right), \frac{4}{\varepsilon \delta} \right]  
\end{equation*}
with $\varepsilon \in (0,1)$ and $\delta \in (0,1)$, then the following inequality holds with a probability of at least $1-\delta$:
\begin{equation*}
	\left\| \mat{X}_{[n]}  - \tilde{\mat{G}}_{n(2)}(\mat{G}_{[2]}^{\ne n})^\intercal \right\|_F \leq (1+\varepsilon) \min_{\mat{G}_{n(2)}} \left\| \mat{X}_{[n]} - \mat{G}_{n(2)}(\mat{G}_{[2]}^{\ne n})^\intercal \right\|_F.
\end{equation*}
\end{theorem}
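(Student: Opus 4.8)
The plan is to recognize the sketched subproblem \eqref{eq:rtr_als} as sampling-based multiple-response least squares and to run it through the classical ``subspace embedding plus approximate matrix multiplication implies relative-error regression'' argument, with the leverage-score estimate of \Cref{lem:lev_est} supplying the only distributional input. After transposing inside the Frobenius norm, $\tilde{\mat{G}}_{n(2)}$ (which is exactly the core produced at the corresponding step of \Cref{alg:rstr}) is the minimizer of $\| \mat{\Psi}_n(\mat{A}\mat{W} - \mat{B}) \|_F$ over $\mat{W}$, where $\mat{A} \defeq \mat{G}_{[2]}^{\ne n} \in \bb{R}^{J_n \times R_n R_{n+1}}$, $\mat{B} \defeq \mat{X}_{[n]}^\intercal$, and $\mat{W} = \mat{G}_{n(2)}^\intercal$; write $d \defeq \rank(\mat{A}) \le R_n R_{n+1}$ and let $\mat{U} \in \bb{R}^{J_n \times d}$ be an orthonormal basis of $\range(\mat{A})$. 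By \Cref{lem:lev_est}, $\mat{\Psi}_n$ samples (and rescales) $m$ i.i.d.\ rows of $\mat{A}$ from a distribution $\vect{q}^{\ne n}$ with $\vect{q}^{\ne n}(i) \ge \beta_n\,\ell_i(\mat{A})/d$ and $\beta_n^{-1} = R_n R_{n+1}\prod_{m \notin \{n,n+1\}} R_m^2$, so $\|\mat{U}(i,:)\|_2^2 / \vect{q}^{\ne n}(i) \le d/\beta_n \le \prod_{j=1}^N R_j^2$ for every $i$; this single uniform bound on rescaled leverage scores is all that the rest of the argument uses about $\mat{\Psi}_n$.

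I would then establish two properties of $\mat{\Psi}_n$ restricted to $\range(\mat{A})$. First, a subspace embedding: since $\mat{U}^\intercal \mat{\Psi}_n^\intercal \mat{\Psi}_n \mat{U} - \mat{I}_d$ is a sum of $m$ i.i.d.\ zero-mean symmetric matrices each of spectral norm at most $d/(m\beta_n)$, a matrix Chernoff bound gives $\| \mat{U}^\intercal \mat{\Psi}_n^\intercal \mat{\Psi}_n \mat{U} - \mat{I}_d \|_2 \le 1 - 1/\sqrt{2}$, hence $\sigma_{\min}(\mat{\Psi}_n \mat{U}) \ge 2^{-1/4}$, with probability at least $1-\delta/2$ once $m \ge \frac{16}{3(\sqrt{2}-1)^2}\,\frac{d}{\beta_n}\ln\frac{4d}{\delta}$. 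Second, an approximate matrix multiplication bound: with $\mat{R}^\ast \defeq \mat{B} - \mat{A}\mat{W}^\ast$ the optimal residual (so $\mat{U}^\intercal \mat{R}^\ast = \mat{0}$), a second-moment computation gives $\bb{E}\,\| \mat{U}^\intercal \mat{\Psi}_n^\intercal \mat{\Psi}_n \mat{R}^\ast \|_F^2 \le \frac{1}{m}\sum_i \frac{\|\mat{U}(i,:)\|_2^2}{\vect{q}^{\ne n}(i)}\|\mat{R}^\ast(i,:)\|_2^2 \le \frac{d}{m\beta_n}\|\mat{R}^\ast\|_F^2$, so Markov's inequality gives $\| \mat{U}^\intercal \mat{\Psi}_n^\intercal \mat{\Psi}_n \mat{R}^\ast \|_F^2 \le \frac{\varepsilon}{2}\|\mat{R}^\ast\|_F^2$ with probability at least $1-\delta/2$ once $m \ge \frac{4}{\varepsilon\delta}\,\frac{d}{\beta_n}$. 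Because $d \le R_n R_{n+1}$ and $d/\beta_n \le \prod_{j=1}^N R_j^2$, both thresholds are implied by the hypothesis on $m$, and a union bound makes the two events hold simultaneously with probability at least $1-\delta$.

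Finally, conditioning on both events, I would combine them through the sketched normal equations $\mat{U}^\intercal\mat{\Psi}_n^\intercal\mat{\Psi}_n(\mat{A}\tilde{\mat{W}} - \mat{B}) = \mat{0}$: using $\mat{A}(\tilde{\mat{W}} - \mat{W}^\ast) = \mat{U}\,\mat{U}^\intercal\mat{A}(\tilde{\mat{W}} - \mat{W}^\ast)$ and $\mat{U}^\intercal \mat{R}^\ast = \mat{0}$, one gets $\mat{U}^\intercal\mat{\Psi}_n^\intercal\mat{\Psi}_n\mat{U}\,\mat{U}^\intercal\mat{A}(\tilde{\mat{W}} - \mat{W}^\ast) = \mat{U}^\intercal\mat{\Psi}_n^\intercal\mat{\Psi}_n\mat{R}^\ast$; bounding the left side below by the spectral estimate and the right side above by the AMM estimate yields $\|\mat{A}(\tilde{\mat{W}} - \mat{W}^\ast)\|_F^2 \le \varepsilon\|\mat{R}^\ast\|_F^2$, and the Pythagorean identity $\|\mat{B} - \mat{A}\tilde{\mat{W}}\|_F^2 = \|\mat{R}^\ast\|_F^2 + \|\mat{A}(\tilde{\mat{W}} - \mat{W}^\ast)\|_F^2$ then gives $\|\mat{B} - \mat{A}\tilde{\mat{W}}\|_F \le \sqrt{1+\varepsilon}\,\|\mat{R}^\ast\|_F \le (1+\varepsilon)\|\mat{R}^\ast\|_F$, which is the claim after transposing back. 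I expect the main obstacle to be pure constant bookkeeping: tuning the matrix Chernoff step so its threshold lands on exactly $\frac{16}{3(\sqrt{2}-1)^2}\ln(4R_nR_{n+1}/\delta)$, propagating the spectral distortion $1-1/\sqrt{2}$ through the structural inequality so the final factor is exactly $1+\varepsilon$, and verifying that the Frobenius-norm (multiple right-hand side) versions of the matrix-concentration and approximate-multiplication bounds carry the same constants as the single-vector case.
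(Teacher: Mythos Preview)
Your proposal is correct and follows essentially the same route as the paper. The paper does not spell out a proof of \Cref{thm:lev}; it simply observes that the argument is the leverage-based analogue of its proof of \Cref{thm:uni} (built on \Cref{SUPP:lemma:structural-conds,SUPP:lemma:approx-mm,SUPP:lemma:approx-cur}) and defers to \cite[Theorem~7]{malik2021SamplingBasedMethod}, whose proof is exactly the subspace-embedding-plus-approximate-matrix-multiplication argument you sketch, with \Cref{lem:lev_est} supplying the misestimation factor $\beta_n$ and hence the bound $d/\beta_n \le \prod_{j} R_j^2$.
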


\paragraph{KSRFT}
The definition of KSRFT is listed as follows.

\begin{definition}[KSRFT \cite{battaglino2018PracticalRandomized,jin2021FasterJohnson}]\label{def:SRFT}
	The KSRFT is defined as
	\begin{equation*}
	\mat{\Psi} = \sqrt{\frac{\prod_{j=1}^N I_j}{m}}
	\mat{S}  \left(\bigotimes_{j=1}^N (\mat{F}_j \mat{D}_j)\right),
	\end{equation*}
	where
	\begin{itemize}
		\item $\mat{S} \in \bb{R}^{m \times \prod_{j=1}^N I_j}$ : $m$ rows, drawn uniformly with replacement, of the $\prod_{i=j}^N I_j \times \prod_{j=1}^N I_j$ identity matrix, 
        i.e., it is a unform sampling matrix;
		\item $\mat{F}_j \in \bb{C}^{I_j \times I_j}$ : (unitary) discrete Fourier transform (DFT) of dimension $I_j$; 
		\item $\mat{D}_j \in \bb{R}^{I_j \times I_j}$ : a diagonal matrix with independent random diagonal entries drawn uniformly from $\{+1,-1\}$ (also called random sign-flip operator).
	\end{itemize}
\end{definition}

\Cref{alg:sketchST} shows the method for calculating the sketched subchain and input tensors based on KSRFT, which is summarized from \cite{yu2022PracticalSketchingBased}. 
Considering that KSRFT transforms the original TR-ALS subproblems into complex ones, 
the update of TR-cores needs to do the following slight change:
\begin{align*}
&\mat{G}_{N(2)}^{new} \leftarrow \Re\left(\hat{\mat{X}}_{S[N]}^{new} \right)\left( \Re\left( \left( \hat{\mat{G}}_{S[2]}^{\ne N} \right)^\intercal \right)\right)^\dagger,\quad
\mat{G}_{N(2)} \leftarrow \begin{bmatrix} \mat{G}_{N(2)}^{old} \\ \mat{G}_{N(2)}^{new} \end{bmatrix},\\
&\mat{P}_{n} \leftarrow \mat{P}_{n}^{old} + \mat{P}_{n} + \hat{\mat{X}}_{S[n]}^{new}  \overline{\left(\hat{\mat{G}}^{\ne n}_{new} \right)_{S[2]}},\\
&\mat{Q}_{n} \leftarrow \mat{Q}_{n}^{old} + \left(\hat{\mat{G}}^{\ne n}_{new} \right)_{S[2]}^\intercal \left(\hat{\mat{G}}^{\ne n}_{new} \right)_{S[2]},\\
& \mat{G}_{n(2)} \leftarrow \Re(\mat{P}_{n}) \Re(\mat{Q}_{n})^{\dagger},
\end{align*}
where $\hat{\tensor{X}} = \tensor{X} \times_1 (\mat{F}_1 \mat{D}_1) \times_2 (\mat{F}_2 \mat{D}_2) \cdots \times_N (\mat{F}_N \mat{D}_N)$, 
$\hat{\tensor{G}}_{n} = \tensor{G}_{n} \times_2 (\mat{F}_n \mat{D}_n) \textrm{ for }n=1,\cdots,N$, and $\Re(\cdot)$ and $\overline{(\cdot)}$ remain the real-value and conjugation of entries of a matrix, respectively. 
The detailed algorithm is summarized in \Cref{alg:rstr_ksrft} in \Cref{SUPP:alg} and the theoretical guarantee is given in \Cref{thm:ksrft}.

\begin{algorithm}
\caption{Sketched subchain and input tensors based on KSRFT, summarized from \cite{yu2022PracticalSketchingBased}}
\label{alg:sketchST}
\textbf{Input:} TR-cores $\{\tensor{G}_k \in \bb{R}^{R_k \times I_k \times R_{k+1}}\}_{k=1,k\ne n}^N$, sketch size $m$, tensor mode $n$

\textbf{Output:} sketched subchain tensor $\hat{\tensor{G}}^{\ne n}_S$, sketched input tensor $\hat{\mat{X}}_{S[n]}$

\begin{algorithmic}[1]	
	\State Define the random sign-flip operators $\mat{D}_j$ and DFT matrices $\mat{F}_j$ for $j \in [N]$
	\State Mix TR-cores: $\hat{\tensor{G}}_{j} \leftarrow \tensor{G}_{j} \times_2 (\mat{F}_j \mat{D}_j)$, for $j \in [N]\backslash n$ 
	\State Mix tensor: $\hat{\tensor{X}} \leftarrow \tensor{X} \times_1 (\mat{F}_1 \mat{D}_1) \times_2 (\mat{F}_2 \mat{D}_2) \cdots \times_N (\mat{F}_N \mat{D}_N)$ 
	\For{$n = 1, \cdots, N$}
		\State Define sampling operator $\mat{S} \in \bb{R}^{m \times \prod_{j \ne n}I_j}$ 
		\State Retrieve \texttt{idxs} from $\mat{S}$ 
		\State Compute $\hat{\tensor{G}}^{\ne n}_S$ and $\hat{\mat{X}}_{S[n]}$ by \Cref{alg:sstp-st} using uniform sampling
		\State $\hat{\mat{X}}_{S[n]} \leftarrow  \mat{D}_n \mat{F}_n^* \hat{\mat{X}}_{S[n]}$
	\EndFor			
\end{algorithmic}
\end{algorithm}

\begin{theorem}
\label{thm:ksrft}Let $\mat{\Psi}_{n}$ be a KSRFT as defined in \Cref{def:SRFT} and 
$$\tilde{\mat{G}}_{n(2)} \defeq \arg\min_{\mat{G}_{n(2)}} \left\| \mat{X}_{[n]} \mat{\Psi}_{n}^\intercal - \mat{G}_{n(2)}(\mat{\Psi}_{n}\mat{G}_{[2]}^{\ne n})^\intercal \right\|_F.$$ 
If $m \geq \bigO{xy}$
with 
\begin{align*}
x&=\varepsilon^{-1} (R_{n}R_{n+1}) \log^{2N-3}\left(\left(\frac{R_{n}R_{n+1}}{\varepsilon}\right)^{\frac{R_{n}R_{n+1}}{2}} \frac{N-1}{\eta}\right),\\
y&=\log^4\left(\left(\frac{R_{n}R_{n+1}}{\varepsilon}\right)^{\frac{1}{2}} \log^{N-1}\left( \left(\frac{R_{n}R_{n+1}}{\varepsilon}\right)^{\frac{R_{n}R_{n+1}}{2}} \frac{N-1}{\eta} \right) \right) \log \prod_{j \ne n} I_j,
\end{align*}
where $\varepsilon \in (0,1)$ is such that $\prod_{j \ne n} I_j \lesssim 1/\varepsilon^{R_{n}R_{n+1}}$ with $R_{n}R_{n+1} \geq 2$, $\delta \in (0,1)$, and $\eta \in (0,1)$,
then the following inequality holds with a probability of at least $1-\eta-2^{-\Omega(\log \prod_{j \ne n} I_j)}$:
\begin{equation*}
	\left\| \mat{X}_{[n]}  - \tilde{\mat{G}}_{n(2)}(\mat{G}_{[2]}^{\ne n})^\intercal \right\|_F \leq (1+\varepsilon) \min_{\mat{G}_{n(2)}} \left\| \mat{X}_{[n]} - \mat{G}_{n(2)}(\mat{G}_{[2]}^{\ne n})^\intercal \right\|_F.
\end{equation*}
Furthermore, if an assumption on $(N-1)/\eta \leq (R_{n}R_{n+1}/\varepsilon)^{R_{n}R_{n+1}/2}$ also holds, the bound on sketch size can be changed to 
{\scriptsize
\begin{equation*}
	m \geq \bigO{\varepsilon^{-1} (R_{n}R_{n+1})^{2(N-1)} \log^{2N-3}\left(\frac{R_{n}R_{n+1}}{\varepsilon}\right) \log^4\left(\frac{R_{n}R_{n+1}}{\varepsilon} \log\left(\frac{R_{n}R_{n+1}}{\varepsilon}\right)\right) \log \prod_{j \ne n} I_j} .  
\end{equation*}
}
\end{theorem}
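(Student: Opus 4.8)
The statement is a sketched least-squares guarantee for the subproblem \eqref{eq:tr_als}: after transposing inside the Frobenius norm it reads $\min_{\mat{Z}}\|(\mat{\Psi}_n\mat{A})\mat{Z}-\mat{\Psi}_n\mat{B}\|_F$ with design matrix $\mat{A}=\mat{G}_{[2]}^{\ne n}\in\bb{R}^{J_n\times R_nR_{n+1}}$, right-hand side $\mat{B}=\mat{X}_{[n]}^\intercal$, and $\mat{Z}=\mat{G}_{n(2)}^\intercal$. The plan is to (i) reduce the $(1+\varepsilon)$ residual bound to two \emph{oblivious} embedding properties of the KSRFT $\mat{\Psi}_n$, and (ii) verify those properties at the claimed sketch size by combining a covering-number argument for the $R_nR_{n+1}$-dimensional space $\range(\mat{A})$ with the Kronecker Johnson--Lindenstrauss bound of \cite{jin2021FasterJohnson}. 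One works over $\bb{C}$ throughout; since $\mat{X}_{[n]}$ and $\mat{G}_{[2]}^{\ne n}$ are real, $\range(\mat{A})$ is effectively a real $R_nR_{n+1}$-dimensional subspace and the bound transfers to the real-part formulation actually implemented in \Cref{alg:rstr_ksrft}.

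First I would invoke the classical randomized least-squares reduction (see e.g.\ \cite{woodruff2014SketchingTool,drineas2012FastApproximation}): letting $\mat{U}$ be an orthonormal basis of $\range(\mat{A})$ and $\mat{R}^\star=\mat{B}-\mat{A}\mat{Z}^\star$ the optimal residual, so $\mat{U}^{*}\mat{R}^\star=0$, the sketched minimizer attains residual at most $\sqrt{1+\varepsilon}\,\|\mat{R}^\star\|_F$, hence $(1+\varepsilon)\|\mat{R}^\star\|_F$ after rescaling $\varepsilon$ by a constant, provided $\mat{\Psi}_n$ satisfies (a) the $\tfrac{1}{2}$-subspace-embedding bound $\|\mat{\Psi}_n\mat{U}\vect{z}\|_2^2=(1\pm\tfrac{1}{2})\|\vect{z}\|_2^2$ for all $\vect{z}$, and (b) the approximate-matrix-multiplication bound $\|(\mat{\Psi}_n\mat{U})^{*}\mat{\Psi}_n\mat{R}^\star\|_F^2\le\varepsilon\,\|\mat{R}^\star\|_F^2$. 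Both are oblivious: (a) is an oblivious subspace embedding (OSE) for a fixed $R_nR_{n+1}$-dimensional subspace, and (b) follows from the second-moment (JL-moment) / approximate-matrix-product property of $\mat{\Psi}_n$, which requires a sketch size of the same order as (a) (indeed the $\varepsilon^{-1}R_nR_{n+1}$ prefactor in $x$ is exactly the $k/\varepsilon$ scaling that approximate matrix product demands, with $k\defeq R_nR_{n+1}$). So it remains to show a KSRFT of sketch size $m\ge\bigO{xy}$ is simultaneously such an OSE and enjoys the matrix-product bound.

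For the OSE step I would discretize: since $k\ge 2$, the unit sphere of $\range(\mat{A})$ carries a $\gamma$-net $\mc{N}$ with $\gamma\asymp\sqrt{\varepsilon}$ and $|\mc{N}|\le(C/\gamma)^{k}\lesssim(R_nR_{n+1}/\varepsilon)^{R_nR_{n+1}/2}$, precisely the quantity appearing inside the logarithms in $x$ and $y$. Each net vector lies in the $(N-1)$-fold tensor space $\bigotimes_{j\ne n}\bb{R}^{I_j}$, so I apply the Kronecker-SRFT JL guarantee of \cite{jin2021FasterJohnson,battaglino2018PracticalRandomized} with per-point failure probability $\eta/(|\mc{N}|(N-1))$: a sketch of size $m=\bigO{\varepsilon^{-1}\log^{2N-3}(|\mc{N}|(N-1)/\eta)\,\log^{4}(\cdots)\,\log\prod_{j\ne n}I_j}$ then preserves each net vector's norm to within relative error $\sqrt{\varepsilon}$, where the $N-1$ Kronecker factors are exactly what generate the $\log^{2N-3}$ and the auxiliary $\log^{4}$ factors, as well as the additive $2^{-\Omega(\log\prod_{j\ne n}I_j)}$ term in the failure probability, coming from the restricted-isometry event for the mixed Kronecker Fourier transform. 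The regime hypothesis $\prod_{j\ne n}I_j\lesssim\varepsilon^{-R_nR_{n+1}}$ is what keeps the covering number dominant in this union bound, i.e.\ places $\mat{\Psi}_n$ where the bound of \cite{jin2021FasterJohnson} is sharp. A union bound over $\mc{N}$ together with the standard net-to-subspace-embedding argument upgrades this to (a); the same sketch size delivers (b) for the at most $I_n$ columns of $\mat{R}^\star$. Substituting $|\mc{N}|\lesssim(R_nR_{n+1}/\varepsilon)^{R_nR_{n+1}/2}$ into the logarithms recovers exactly the $x$ and $y$ in the statement.

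Finally, for the simplified bound, adding the hypothesis $(N-1)/\eta\le(R_nR_{n+1}/\varepsilon)^{R_nR_{n+1}/2}$ collapses $\log(|\mc{N}|(N-1)/\eta)$ to $\bigO{R_nR_{n+1}\log(R_nR_{n+1}/\varepsilon)}$; raising this to the $(2N-3)$rd power, simplifying the nested logarithm in $y$, and absorbing constants yields the displayed $m\ge\bigO{\varepsilon^{-1}(R_nR_{n+1})^{2(N-1)}\log^{2N-3}(R_nR_{n+1}/\varepsilon)\,\log^{4}(R_nR_{n+1}/\varepsilon\cdot\log(R_nR_{n+1}/\varepsilon))\,\log\prod_{j\ne n}I_j}$. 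The main obstacle is the Kronecker-SRFT OSE bound with the exact per-mode polylogarithmic dependence: proving that an $(N-1)$-fold Kronecker product of randomized DFTs restricted to $m$ uniformly sampled rows is a JL map with these parameters requires controlling the flatness (the restricted-isometry / $\ell_2\!\to\!\ell_\infty$ behaviour) of such Kronecker products, which is where each extra mode costs roughly two logarithmic factors and where the $\log^{4}$ term originates. Rather than reprove this I would invoke \cite{jin2021FasterJohnson} and the batch TR-KSRFT-ALS analysis of \cite{yu2022PracticalSketchingBased}; the remaining work is bookkeeping, namely converting their JL statement into the OSE plus approximate-matrix-product pair needed by the least-squares reduction and matching constants, together with the observation that the streaming subproblem \eqref{eq:rtr_als} being sketched is identical to the batch one, so the guarantee carries over unchanged.
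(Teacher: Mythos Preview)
Your proposal is correct and follows essentially the same approach as the paper: the paper's proof consists of the single sentence ``along the same line, the proof can be completed by using \cite[Theorem~5]{yu2022PracticalSketchingBased},'' i.e., invoke the batch TR-KSRFT-ALS guarantee and then carry it to the streaming subproblems exactly as was done for \Cref{thm:uni}. Your covering-number-plus-Kronecker-JL outline is a faithful unpacking of what \cite{yu2022PracticalSketchingBased} establishes, and your final observation that the streaming subproblem \eqref{eq:rtr_als} coincides with the batch one is precisely the reduction the paper relies on.
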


\section{Numerical Experiments}
\label{sec:experiments}
 In this section, we consider the numerical performance of our STR and rSTR\footnote{We use rSTR-U, rSTR-L and rSTR-K to notate rSTR with uniform sampling, leverage-based sampling and KSRFT, respectively.}. 
Specifically, we first examine their effectiveness and efficiency on two real-world datasets. 
Then, based on the investigation on synthetic tensors, we show their performance 
from various perspectives in greater detail.
Six baselines have been chosen as competitors 
to evaluate the performance in our experiments:
\begin{itemize}
	\item TR-ALS (Cold) \cite{zhao2016TensorRing}: an implementation of TR-ALS without special initialization.
	\item TR-ALS (Hot): the same 
    as above but the TR decomposition of the last time step is used as the initialization for decomposing the current tensor.
	\item TR-ALS-NE \cite{yu2022PracticalAlternating}: a practical implementation of TR-ALS.
	\item TR-ALS-Sampled-U: a sampling-based algorithm with uniform sampling.
	\item TR-ALS-Sampled \cite{malik2021SamplingBasedMethod}: the same as above but with leverage-based sampling.
	\item TR-KSRFT-ALS \cite{yu2022PracticalSketchingBased}: a practical implementation of KSRFT-based algorithm.
\end{itemize}
The computational complexities of the above methods as well as 
ours are listed in \Cref{tab:complexity}. In addition, our STR and rSTR occupy the memory space of  
$$I^{N-1}t^{new} + (2(N-1)I+t^{old})R^2 + (N-1)R^4,$$
which is much smaller than $I^{N-1}(t^{old}+t^{new})$, the memory space of the other methods.
\begin{table}[htbp]
	\centering
	\caption{Complexity comparison of various algorithms 
   (Ignoring the initialization, i.e., the initial TR-cores in offline algorithms and the initialization stage in STR and rSTR, and setting $\tensor{X} \in \bb{R}^{I \times \cdots \times I \times (t^{old} + t^{new}) }$ with the target TR-ranks $R_1=\cdots=R_N=R$ satisfying  $R^2<I$).} 
	\label{tab:complexity}
	\resizebox{1\linewidth}{!}{
	\begin{tabular}{ll}  
		\toprule
		Method 		        & Time only for one time step \\ 
		\midrule
		TR-ALS 	            & $\bigO{\#it \cdot N I^{N-1} R^2 (t^{old}+t^{new})}$\quad\quad\quad\quad // $\#it$ denotes the number of outer loop iterations\\
		TR-ALS-NE           & $\bigO{(N-1)IR^4 + (t^{old}+t^{new})R^4 + \#it \cdot N I^{N-1} R^2 (t^{old}+t^{new})}$ \\
		TR-ALS-Sampled-U    & $\bigO{\#it \cdot ((N-1)ImR^2 + (t^{old}+t^{new})mR^2)}$ \\
		TR-ALS-Sampled      & $\bigO{(N-1)IR^4 + (t^{old}+t^{new})R^4 + \#it \cdot ((N-1)ImR^2 + (t^{old}+t^{new})mR^2)}$ \\
		TR-KSRFT-ALS        & $\bigO{I^{N-1}(t^{old}+t^{new}) \log(I^{N-1}(t^{old}+t^{new})) + \#it \cdot ((N-1)ImR^2 + (t^{old}+t^{new})mR^2)}$ \\
		\midrule
		STR                 & $\bigO{N I^{N-1} R^2 t^{new}}$ \\
		rSTR-U              & $\bigO{(N-1)ImR^2 + t^{new}mR^2}$ \\
		rSTR-L              & $\bigO{(N-1)ImR^2 + t^{new}mR^2}$ \\
		rSTR-K              & $\bigO{I^{N-1}(t^{new}) \log(I^{N-1}(t^{new}))}$ \\
		\bottomrule
	\end{tabular}
	}
\end{table}	

The experimental protocol is the same for all the experiments.  Specifically, for a given dataset of size $I_{1} \times \cdots \times I_{N}$, 
a subtensor of size $I_{1} \times \cdots \times I_{N-1} \times (20\%I_{N})$ is first decomposed by TR-ALS and the TR decomposition is used to initialize all the algorithms.
After that, a section of size $I_{1} \times \cdots \times I_{N-1} \times t^{new}$ of the remaining 
data is appended 
to the existing tensor at a time step, 
immediately following which 
all the methods record their processing time for this step, as well as calculate the relative errors of their current decompositions by 
\begin{equation*}
	\frac{\left\|\tensor{X} -\hat{\tensor{X}}\right\|_F}{\|  \tensor{X} \|_F}=\frac{\left\| \tensor{X}-\TR \left( \{\hat{\tensor{G}}_n\}_{n=1}^N \right)  \right\|_F}{\| \tensor{X} \|_F},
\end{equation*}
where the TR-cores $\{ \hat{\tensor{G}}_n\}_{n=1}^N$ are computed by various algorithms. Thus, continuing the process, we can report and compare the processing time and relative errors for all the time steps. 
 
The same experiment is replicated 10 times for all datasets by using Matlab R2022a on a computer 
 with an Intel Xeon W-2255 3.7 GHz CPU, and 256 GB RAM, and the final results are averaged over these runs.
 Additionally, we also use the Matlab Tensor Toolbox \cite{kolda2006TensorToolbox}.
 
 For the initialization stage, there are some settings of parameters that need to be clarified. 
 Firstly, since we only care about the  comparison on relative performance among different algorithms, it is not necessary to pursue the best rank decomposition for each dataset. 
 Hence, unless otherwise stated, the target rank $R$ is always fixed to 5 for all the datasets. 
 Secondly, to find a good initial TR decomposition, the tolerance $\epsilon$ (the value of the change in relative error between two adjacent steps) is set to $1e-8$ and the maximum number of iterations $IT$ is set to 100. 
 Note that the performance of online algorithms depends on the quality of the initial decomposition \cite{zhou2016AcceleratingOnline,ma2018RandomizedOnline}. However, exploring the impact of initialization is not our main purpose. So, we use the same initialization for both STR and rSTR. Whereas, in practice, it is better 
 to validate the goodness of the initialization 
 to obtain the best subsequent effectiveness.
 
 In addition, in terms of method-specific parameters, for the six batch algorithms (i.e., the six baselines), the default settings, $\epsilon = 1e-10$ and $IT=50$, are used, and
we adopt the same sketch size $m=1000$ in all the randomized algorithms, which has little 
effect on experiments except for \Cref{fig:rank}, since the rank is changing there. Besides, unless otherwise stated, we always set $t^{new}=5$ for all the datasets. 

\subsection{Effectiveness and efficiency}
\label{ssec:efficient}
The experiments are conducted on two real-world datasets of varying characteristics and 
higher-order structure. Specifically, we extract 360 gray-scale images from the popular image dataset \emph{Columbia Object Image Library (COIL-20)}\footnote{\url{https://cave.cs.columbia.edu/repository/COIL-20}} to form a tensor of size $416\times 448\times 360$ and 300 frames of a popular video sequences from \emph{Hall}\footnote{\url{https://github.com/qbzhao/BRTF/tree/master/videos}} to form a tensor of size $144\times  176\times 3\times 300$.

\begin{figure}[htbp] 
	\centering 
	\subfloat[COIL: $416 \times 448 \times 360$]{\includegraphics[scale=0.3]{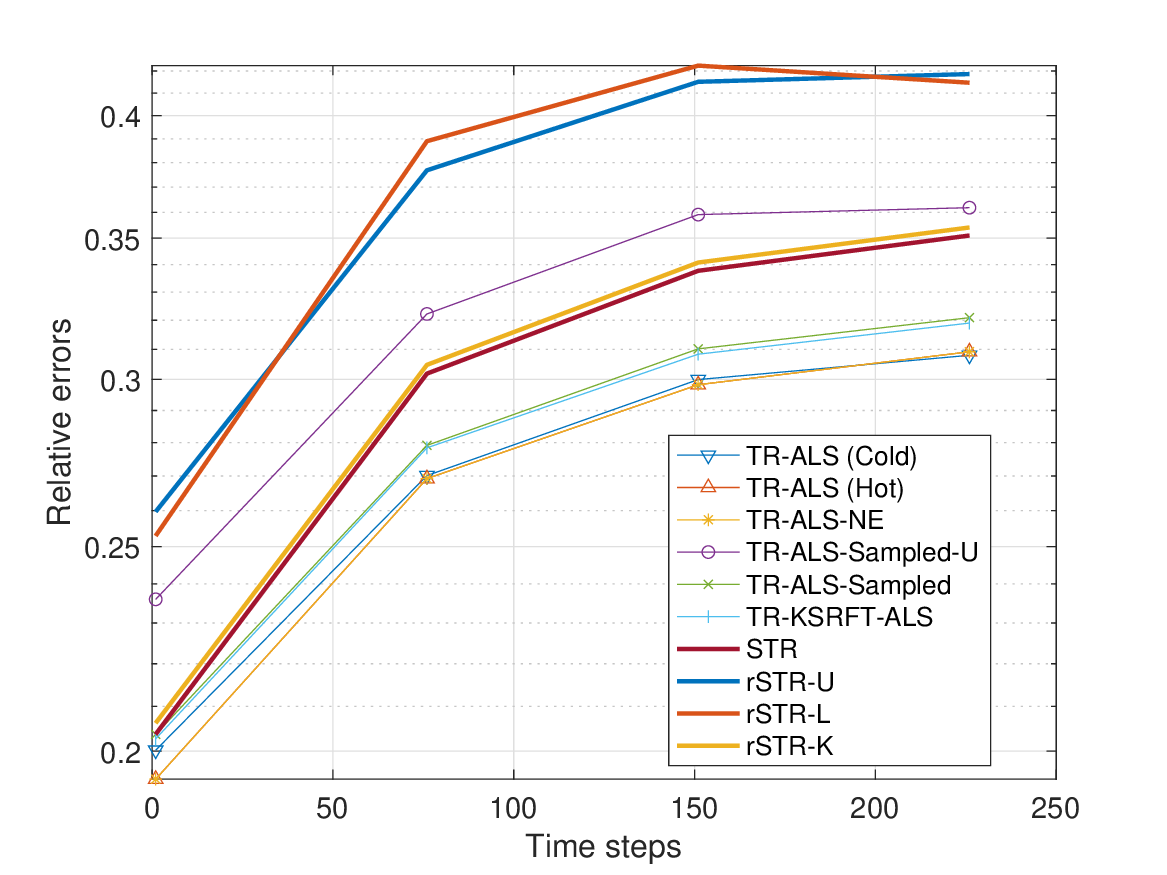}} 
	\subfloat[COIL: $416 \times 448 \times 360$]{\includegraphics[scale=0.3]{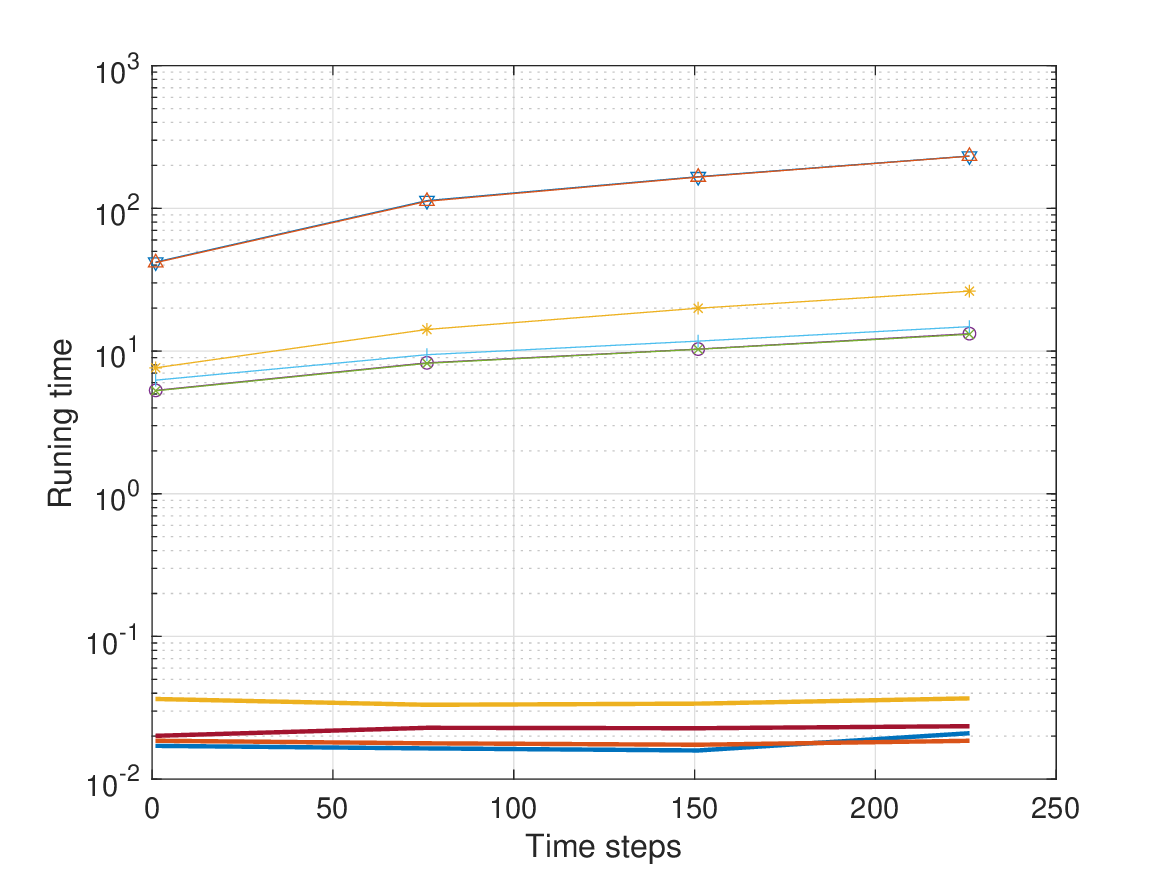}}  
    \quad
	\subfloat[Hall: $144 \times 176 \times 3 \times 300$]{\includegraphics[scale=0.3]{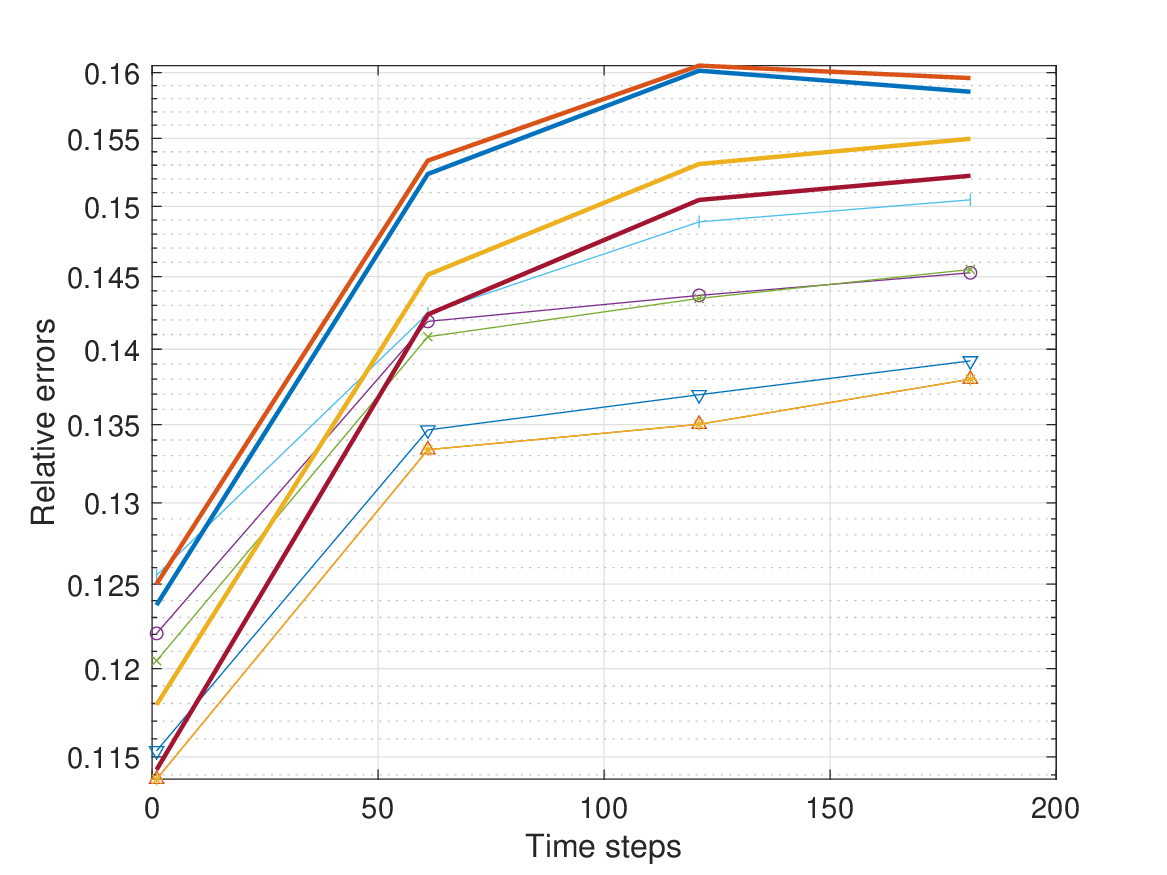}} 
	\subfloat[Hall: $144 \times 176 \times 3 \times 300$]{\includegraphics[scale=0.3]{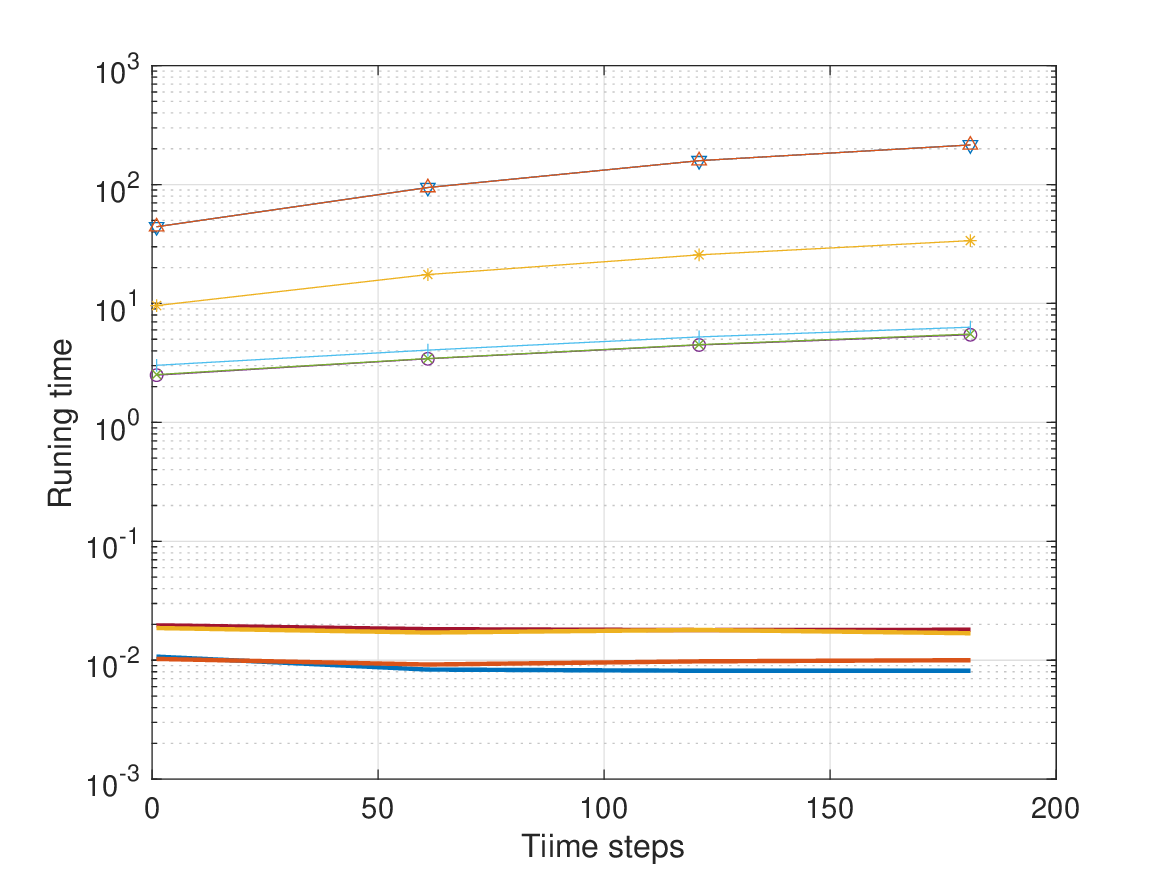}}  
	\caption{Time steps v.s. Relative errors and Time steps v.s. Running time output by algorithms for two real-world datasets.}
	\label{fig:real}
\end{figure}

For these two tensors, the relative errors and processing time for each time step of various algorithms are reported in \Cref{fig:real}, from which we can see that the batch methods, i.e, TR-ALS, TR-ALS-NE, TR-ALS-Sampled-U, TR-ALS-Sampled and TR-KSRFT-ALS, have the expected results which we have known from \cite{yu2022PracticalSketchingBased,yu2022PracticalAlternating}. That is, TR-ALS-NE is identical to TR-ALS in terms of accuracy, but takes much less time due to the structure being used in the algorithm; the three randomized algorithms can accelerate the deterministic methods, however, loss some accuracy.
In addition, TR-ALS (Cold) is slightly less accurate than TR-ALS (Hot). The main reason is that using previous results as initialization can provide a descending seed point for the ALS algorithm, while TR-ALS (Cold) discards this useful information completely.

Our proposed algorithms, i.e., STR and rSTR, show very promising results in terms of accuracy and speed.
Specifically, 
STR is fairly consistent and very similar to the 
batch methods in accuracy; rSTR performs slightly worse but the differences are not remarkable. 
However, both of them are much faster than all the batch methods including the randomized ones. 
Comparing STR and rSTR, the sampling-based rSTR, i.e., rSTR-U and rSTR-L, shows an 
advantage in computing time, however, the projection-based rSTR, i.e., rSTR-K, is not very competitive in this respect. The main reason is that an expensive step, i.e., the mixing tensor step, 
needs to be performed at each time step; see \cite{yu2022PracticalSketchingBased} for more details. 
While, 
when the tensor order increases, the advantage of rSTR-K in running time will gradually emerge; see the experiments in \Cref{ssec:comparison} below. 

\subsection{More comparisons}
\label{ssec:comparison}
In this subsection, with the synthetic data formed by TR decomposition whose TR-cores are generated by random Gaussian tensors with entries drawn independently from a standard normal distribution, we show the impact on performance of various parameters appearing in the computational complexities of algorithms; see \Cref{tab:complexity} for the specific complexities. More specifically, 
we compare each algorithm by varying five parameters: the order ($N$), the dimension ($I$), the dimension of the temporal mode ($I_{N}$), the temporal slice size ($t^{new}$), and the rank ($R_{true} = R$).

We first vary $N$ and $I$. Numerical results on decomposition for four tensors with different  orders and dimensions are given in  \Cref{fig:order}, which shows the similar results to the previous experiments in \Cref{ssec:efficient}. This further validates the effectiveness and efficiency of our algorithms. Note that TR-ALS (Cold) may fluctuate wildly. This is because, as explained above, reinitialization may lead the method to be nonstable. 

\begin{figure}[htbp] 
	\centering 
	\subfloat[$\tensor{X}: 300 \times 300 \times 300$]{\includegraphics[scale=0.3]{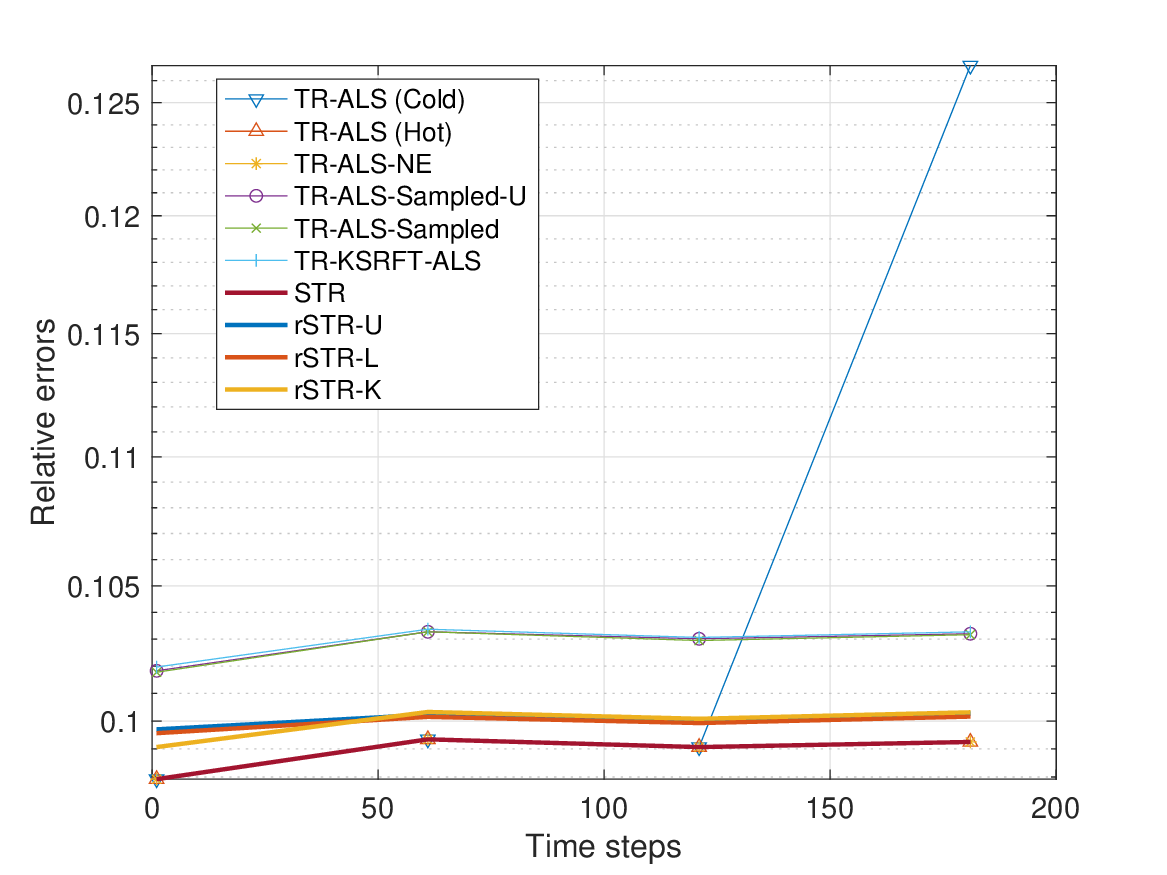}} 
	\subfloat[$\tensor{X}: 300 \times 300 \times 300$]{\includegraphics[scale=0.3]{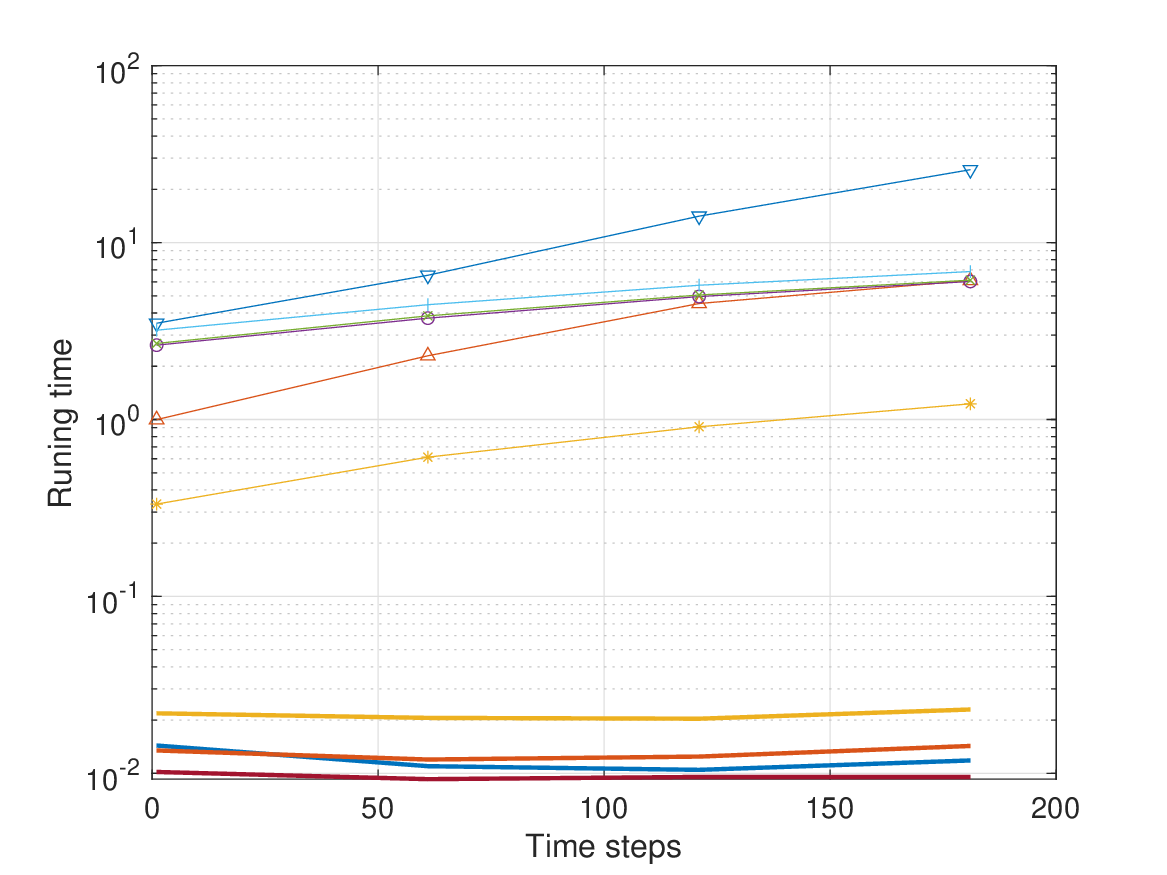}}  
    \quad
	\subfloat[$\tensor{X}: 500 \times 500 \times 500$]{\includegraphics[scale=0.3]{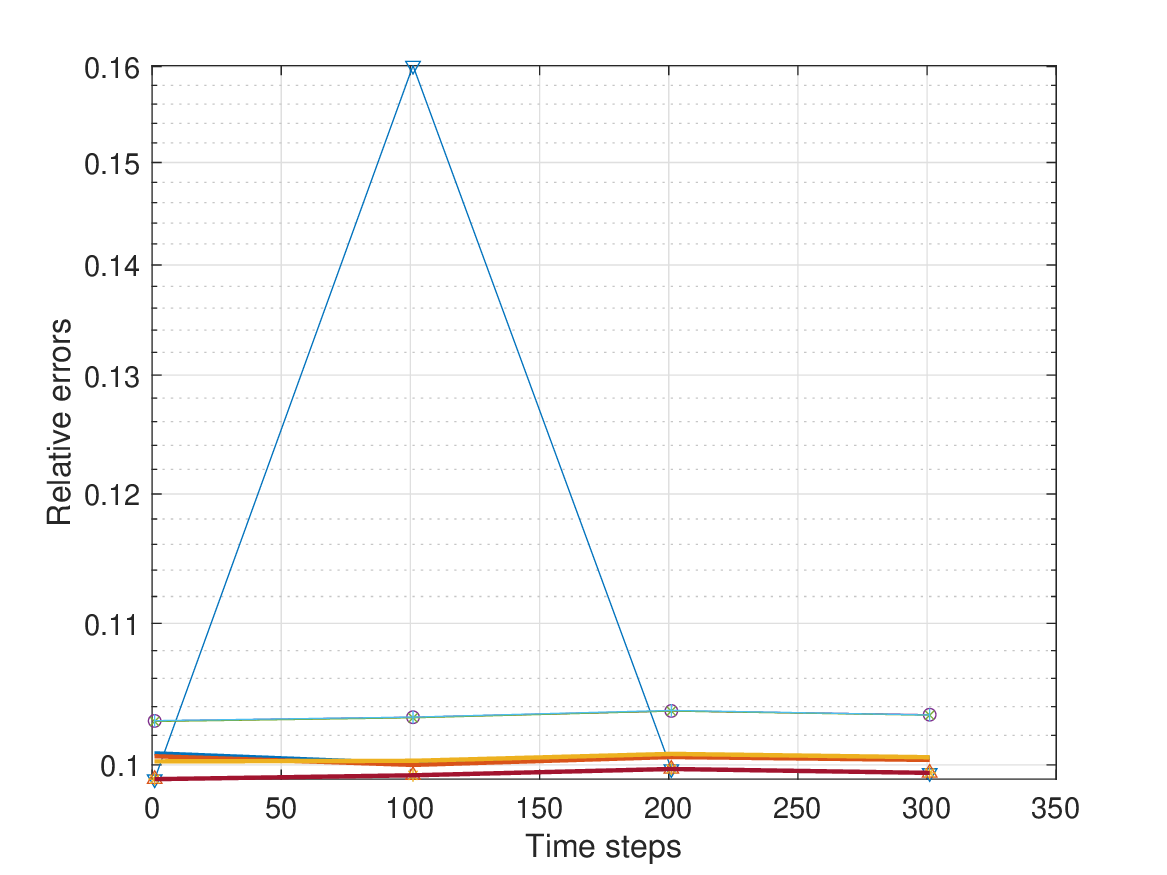}} 
	\subfloat[$\tensor{X}: 500 \times 500 \times 500$]{\includegraphics[scale=0.3]{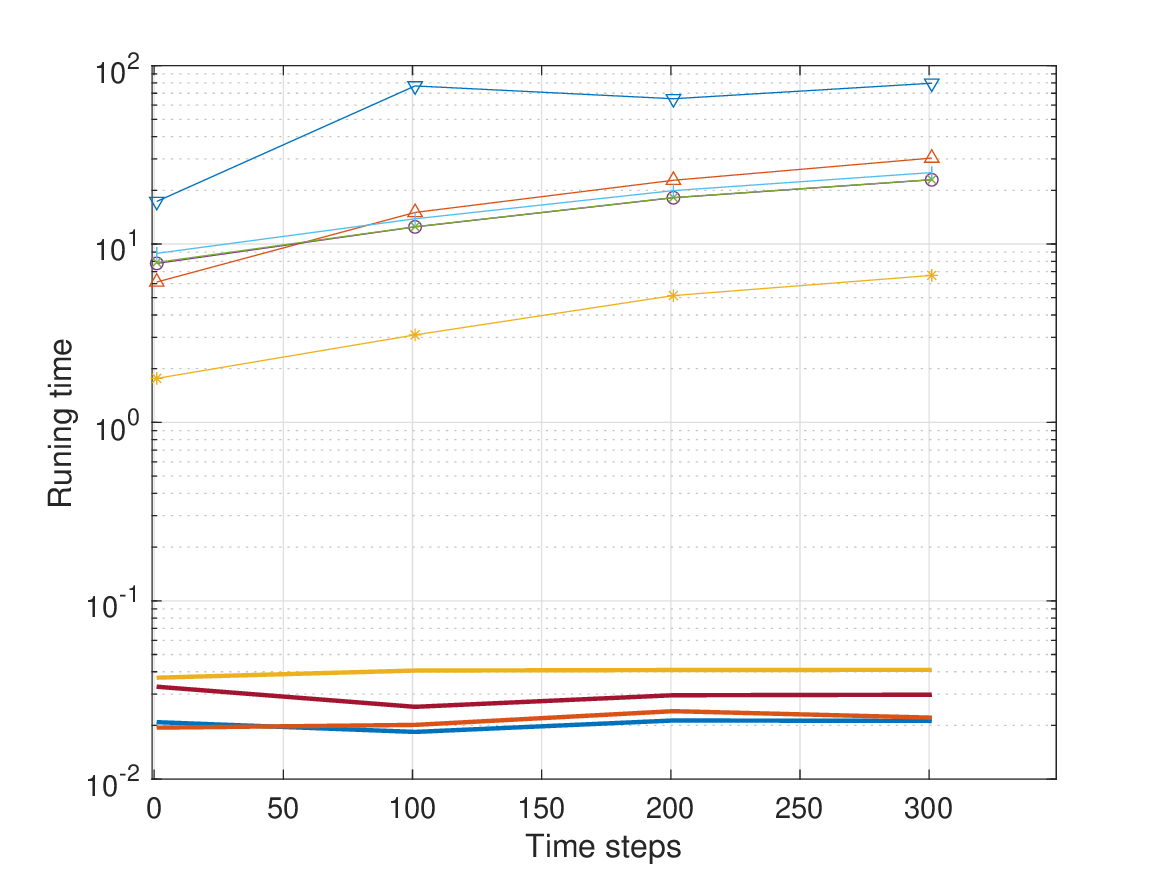}} 
	\quad
	\subfloat[$\tensor{X}: 40 \times 40 \times 40 \times 40 \times 40$]{\includegraphics[scale=0.3]{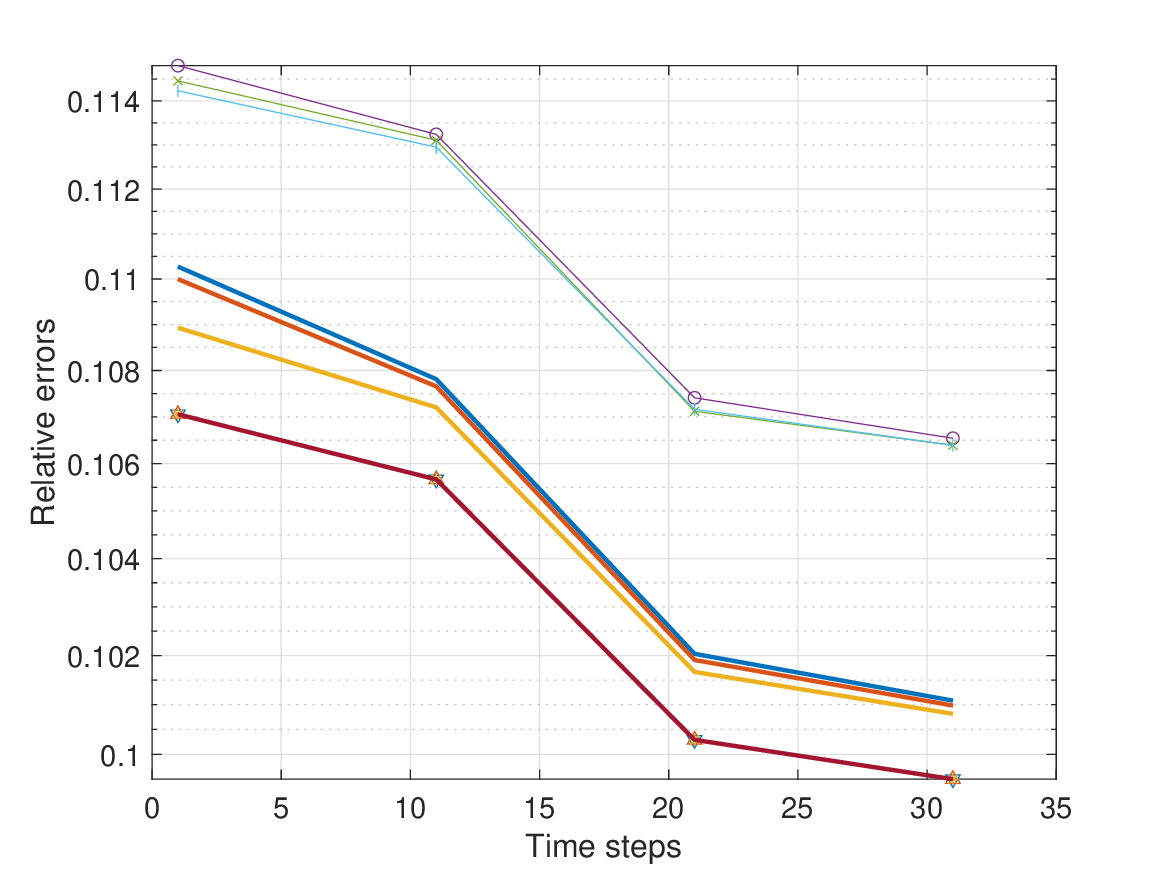}} 
	\subfloat[$\tensor{X}: 40 \times 40 \times 40 \times 40 \times 40$]{\includegraphics[scale=0.3]{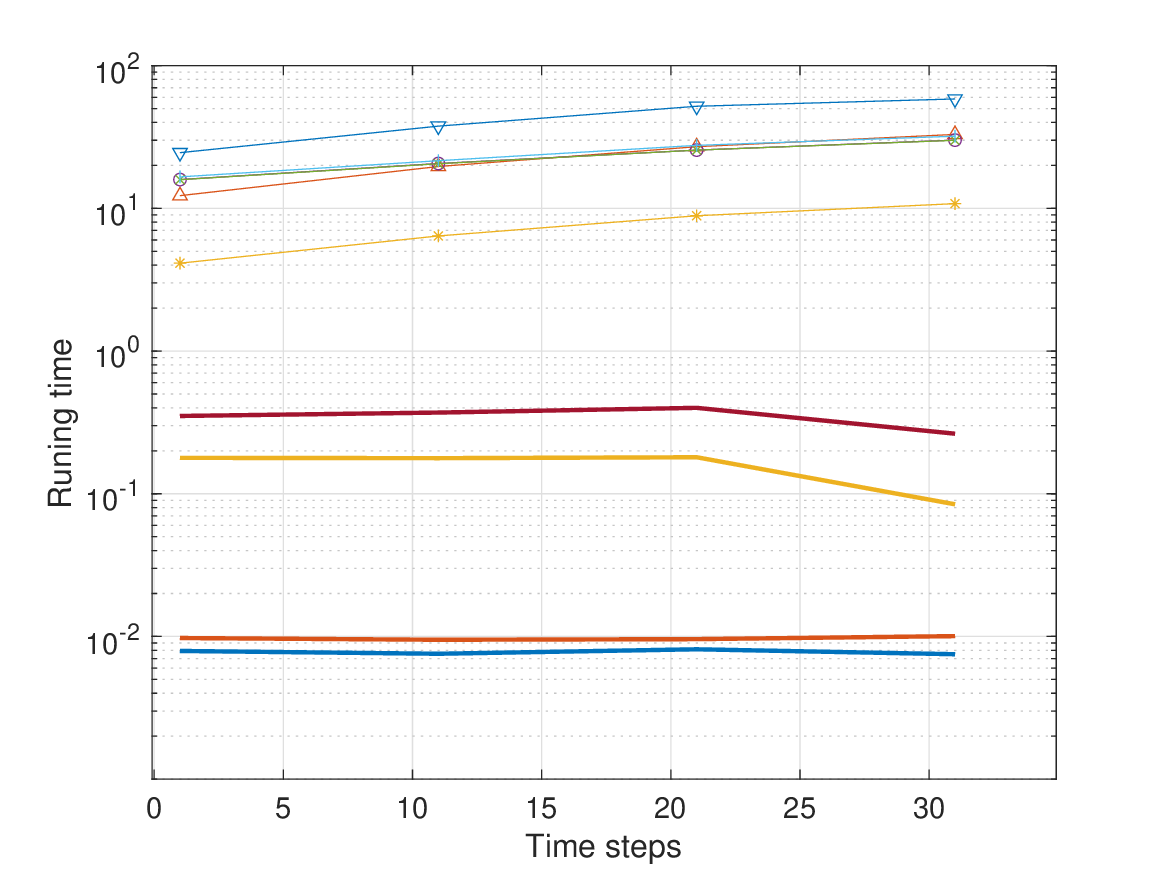}}  
    \quad
	\subfloat[$\tensor{X}: 60 \times 60 \times 60 \times 60 \times 60$]{\includegraphics[scale=0.3]{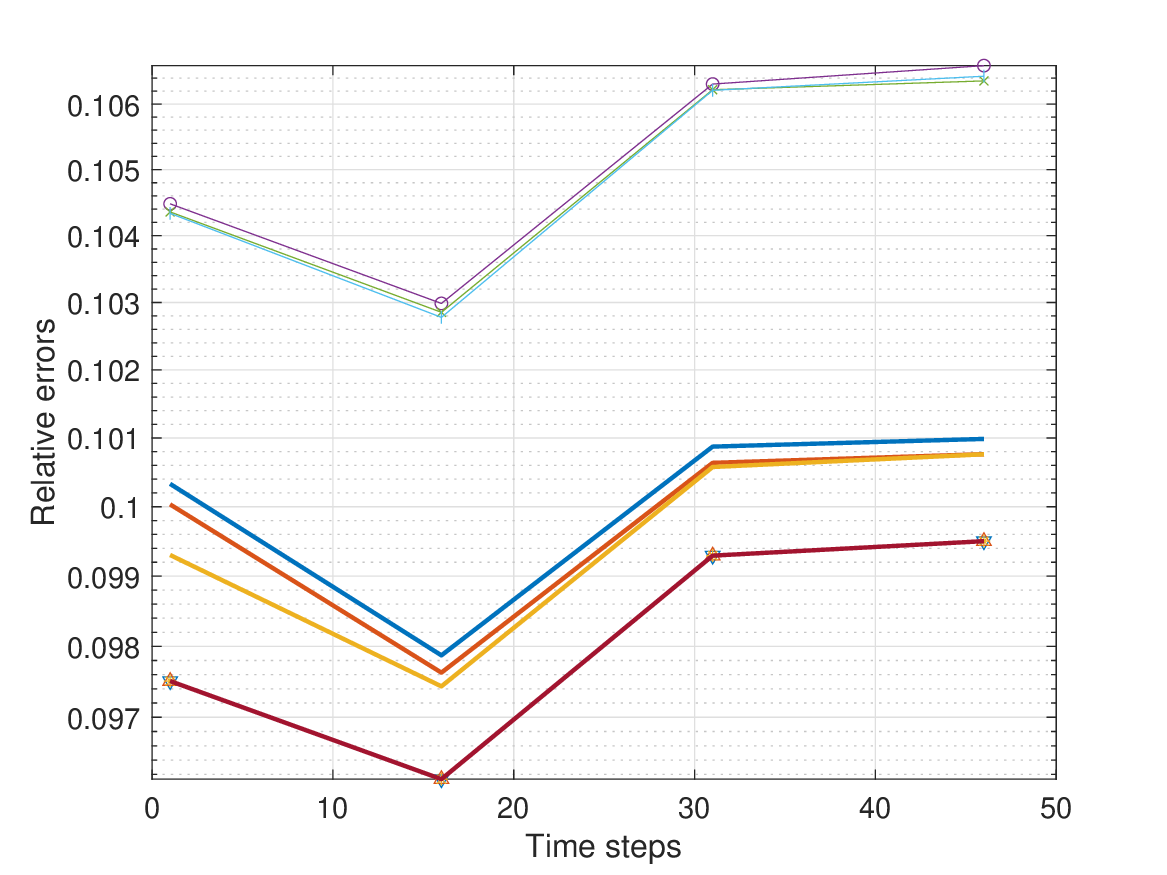}} 
	\subfloat[$\tensor{X}: 60 \times 60 \times 60 \times 60 \times 60$]{\includegraphics[scale=0.3]{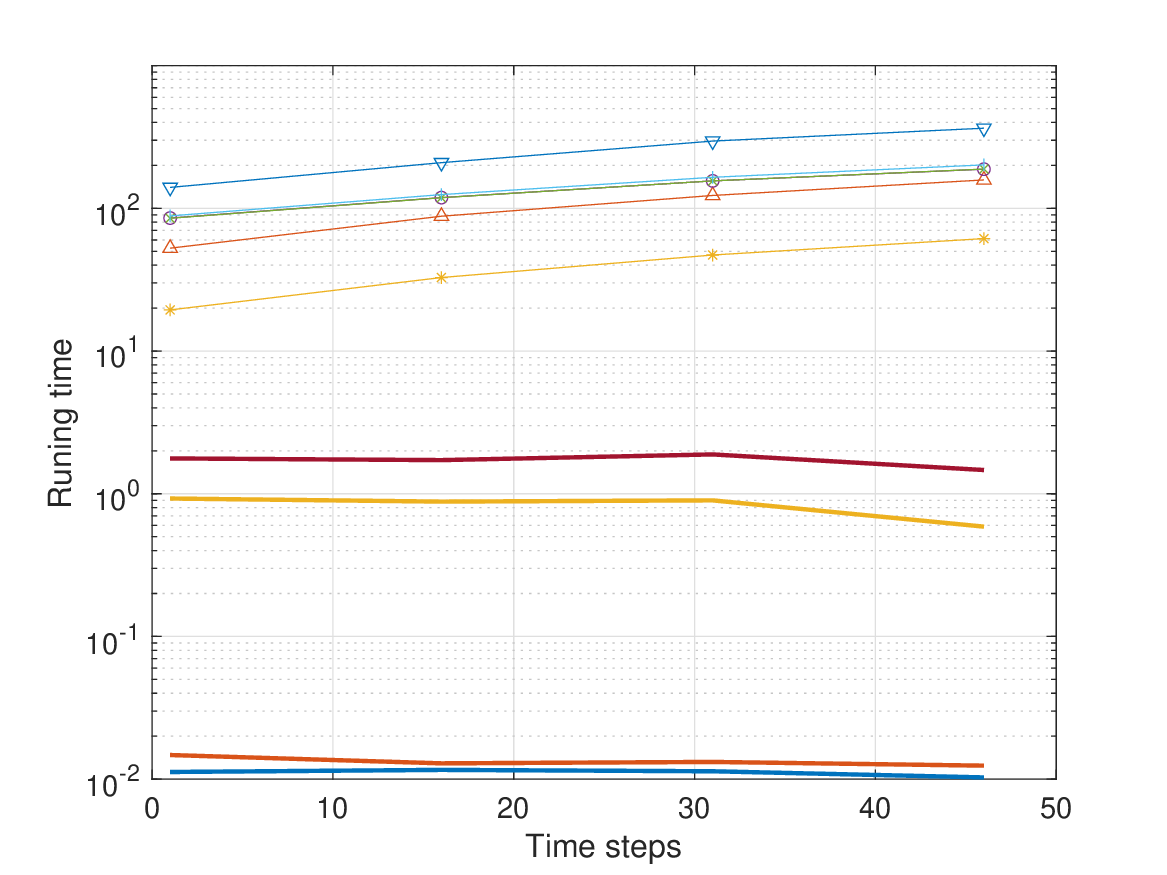}} 
	\caption{Time steps v.s. Relative errors and Time steps v.s. Running time output by algorithms for tensors with different orders and dimensions.}
	\label{fig:order}
\end{figure}

Now, we vary $I_N$. Specifically, we set 
$\tensor{X}: 30 \times 30 \times 30 \times 30 \times I_N$ with $I_N = 120,140,160,180,200$. The final relative errors and the total running time for each tensor are measured and displayed in \Cref{fig:length}, and we can see that, with the errors being almost unchanged, the complexities for all the algorithms increase as 
the length of processed data grows as expected. 
\begin{figure}[htbp] 
	\centering 
	\subfloat[$\tensor{X}: 30 \times 30 \times 30 \times 30 \times I_N$]{\includegraphics[scale=0.31]{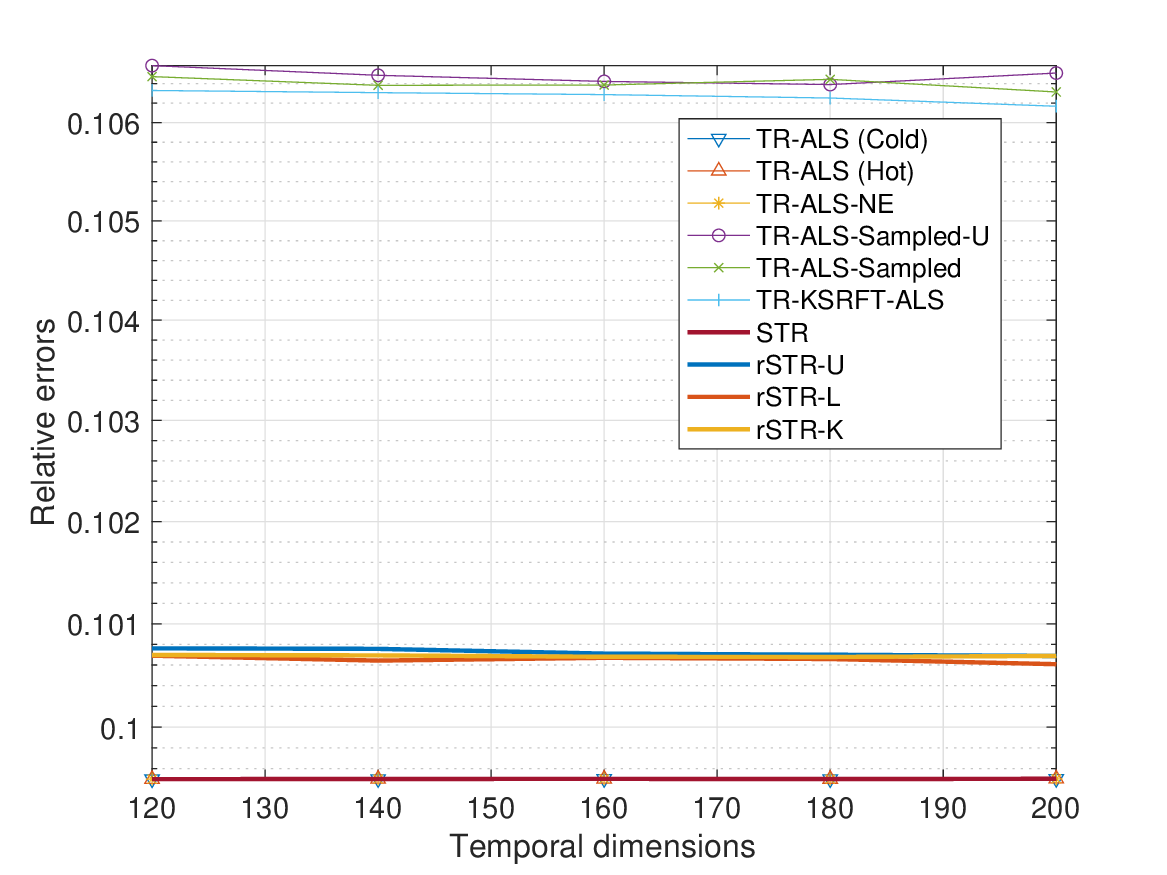}} 
	\subfloat[$\tensor{X}: 30 \times 30 \times 30 \times 30 \times I_N$]{\includegraphics[scale=0.31]{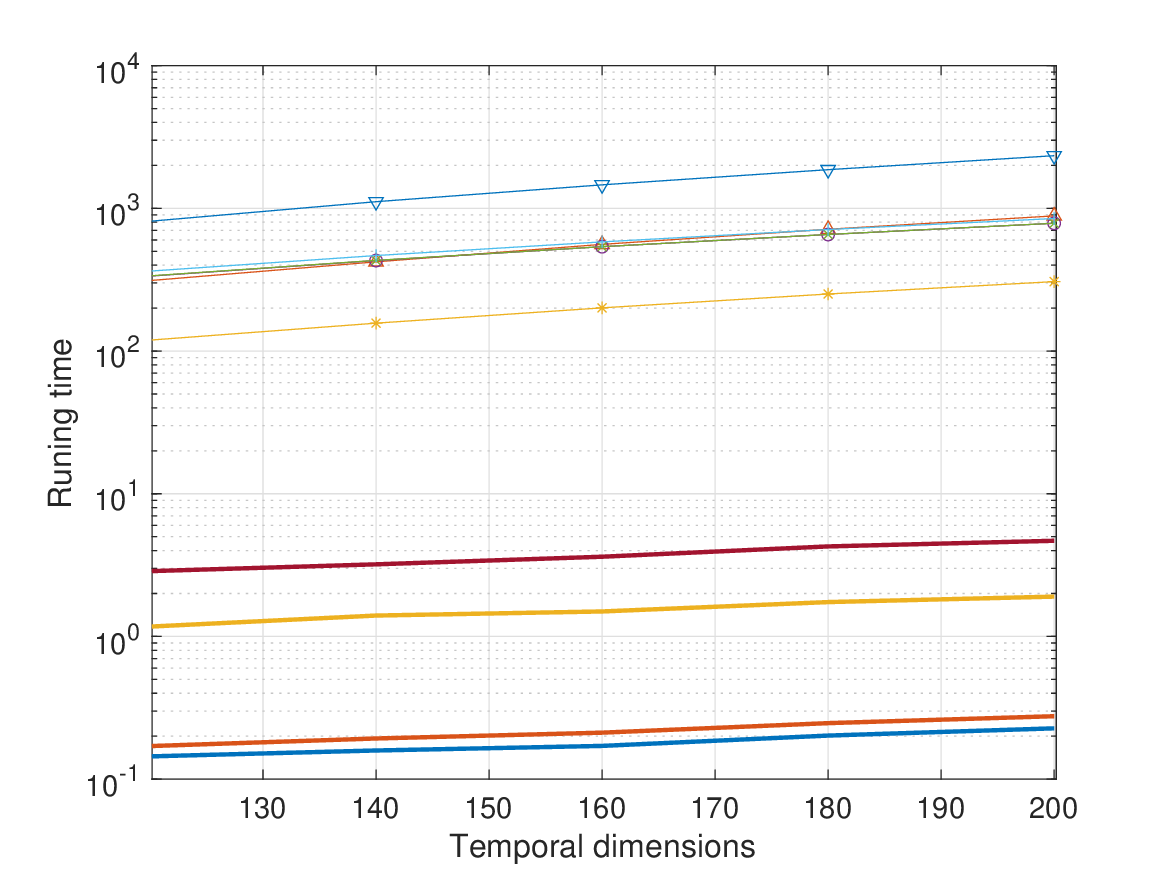}}  
	\caption{Temporal dimensions v.s. Relative errors and Temporal dimensions v.s. Running time output by algorithms for tensors with different temporal dimensions $I_N = 120,140,160,180,200$.}
	\label{fig:length}
\end{figure}

Thirdly, we consider the tensor $\tensor{X}: 30 \times 30 \times 30 \times 30 \times 200$ with $t^{new} = 2,4,6,8,10$ and record the final relative errors and the total running time when the whole tensor is decomposed. 
As can be seen from \Cref{fig:slice}, the temporal slice size has little effect on the quality of the decomposition, but in terms of running time, the larger the temporal slice size is, the less the total running time is. 
This is because, for a tensor with a fixed time dimension, larger temporal slice size means fewer time steps and hence 
less running time.
\begin{figure}[htbp] 
	\centering 
	\subfloat[$\tensor{X}: 30 \times 30 \times 30 \times 30 \times 200$]{\includegraphics[scale=0.31]{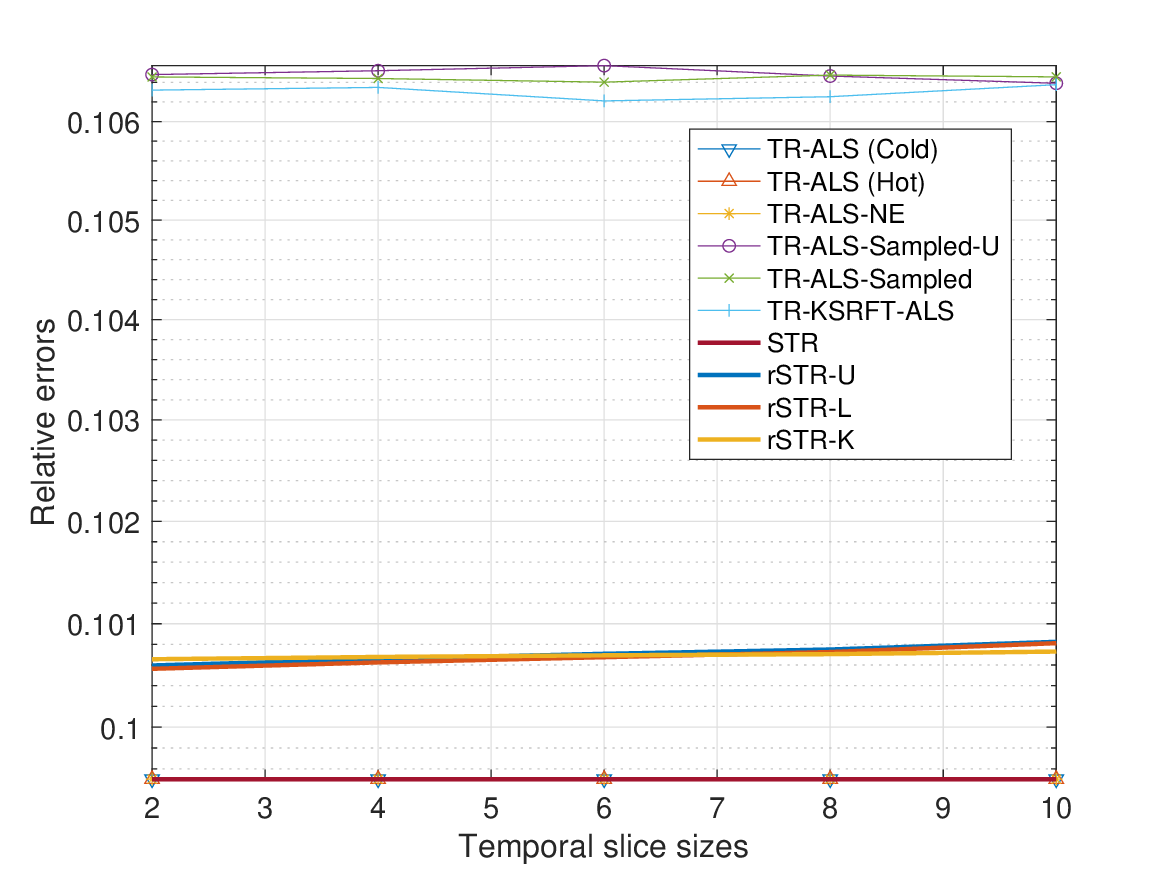}} 
	\subfloat[$\tensor{X}: 30 \times 30 \times 30 \times 30 \times 200$]{\includegraphics[scale=0.31]{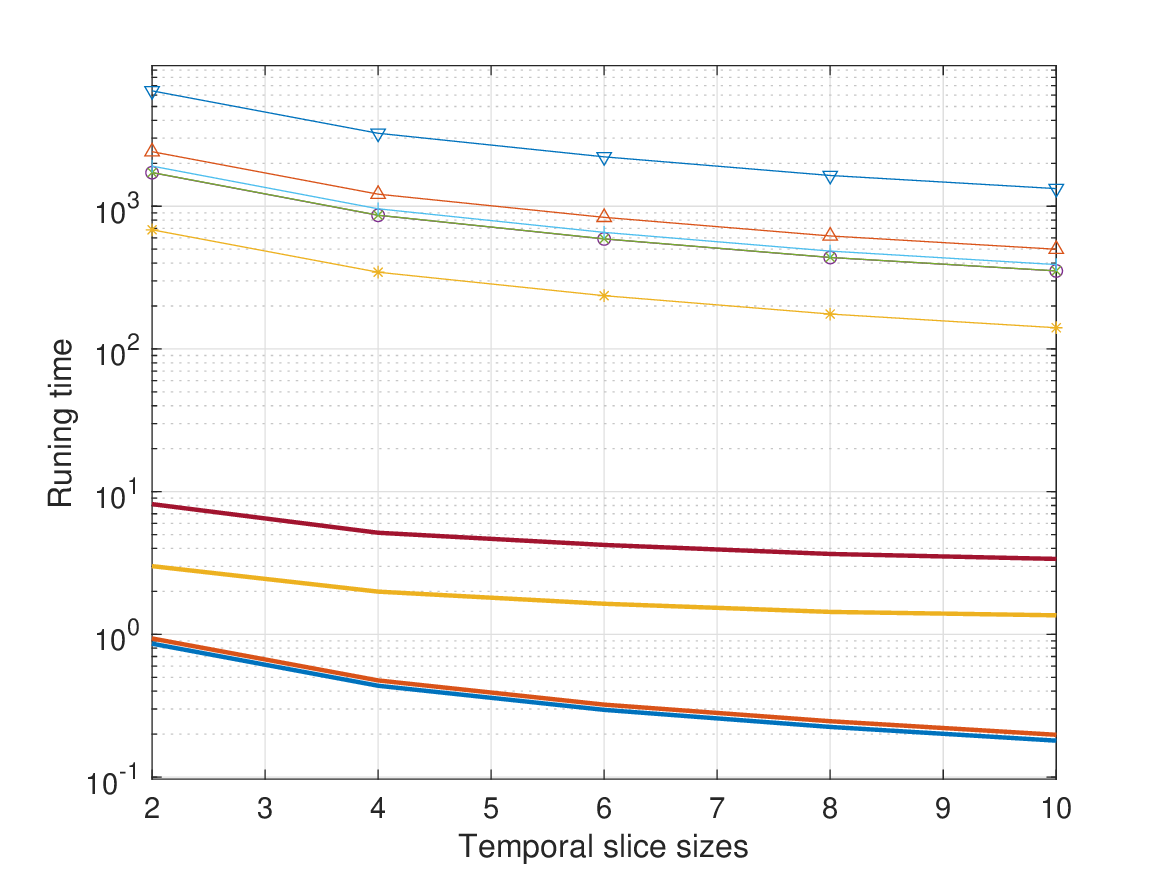}}  
	\caption{Temporal slice sizes v.s. Relative errors and Temporal slice sizes v.s. Running time output by algorithms for a tensor with different temporal slice sizes  $t^{new} = 2,4,6,8,10$.}
	\label{fig:slice}
\end{figure}

Finally, we vary the rank $R_{true}=R$ for tensors 
$\tensor{X}: 30 \times 30 \times 30 \times 30 \times 200$. The final relative errors and the total running time for $R = 3,4,5,6$ are reported in 
\Cref{fig:rank}, from which it is seen that 
the relative errors for STR and three offline deterministic algorithms, i.e., TR-ALS (Cold), TR-ALS (Hot), TR-ALS-NE, have no 
change 
as $R$ varies. Whereas, for 
randomized algorithms, the errors will increase with the rank growing. This is because, by \Cref{thm:uni,thm:lev,thm:ksrft}, the sketch size of randomized algorithms is related to the size of the rank, while the former is fixed in our experiments.
In comparison, 
the range of change for our rSTR is smaller, which is mainly due to the use of the better decomposition results from the previous time step. 
As for the running time, all the randomized algorithms hardly varies as the rank increases, while several deterministic methods increase a little. This is not well reflected in \Cref{tab:complexity} because the assumptions there are not  satisfied when the rank increases, thus making the overall leading order complexity change. 
\begin{figure}[htbp] 
	\centering 
	\subfloat[$\tensor{X}: 30 \times 30 \times 30 \times 30 \times 200$]{\includegraphics[scale=0.31]{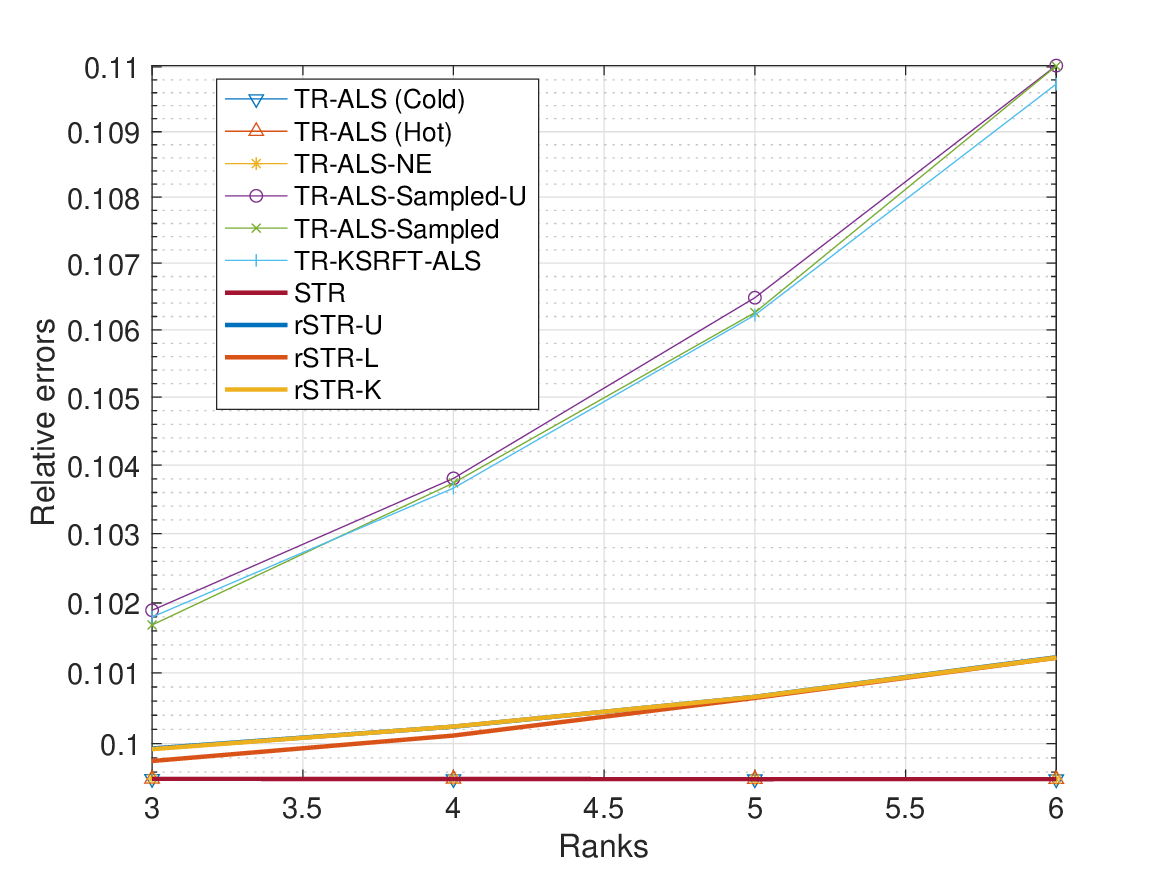}} 
	\subfloat[$\tensor{X}: 30 \times 30 \times 30 \times 30 \times 200$]{\includegraphics[scale=0.31]{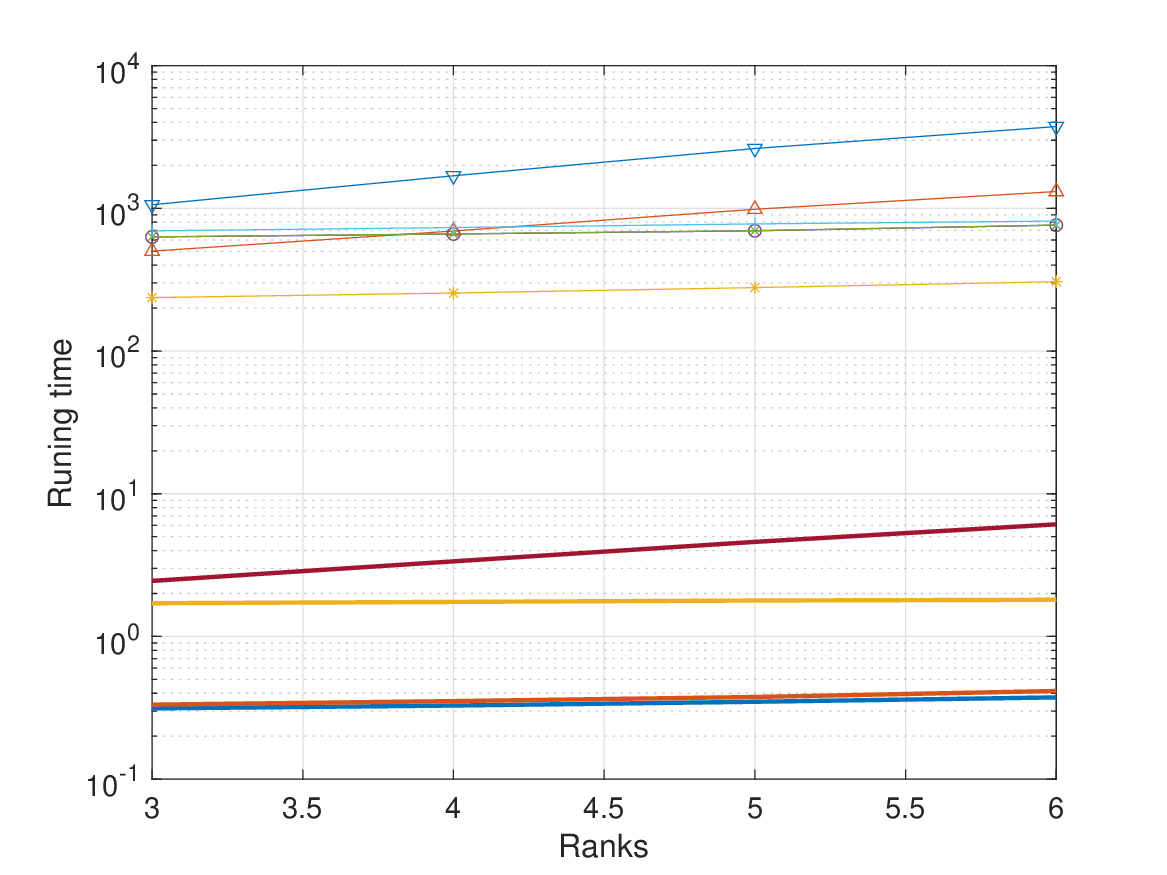}}  
	\caption{Ranks v.s. Relative errors and Ranks v.s. Running time output by algorithms for tensors with different ranks $R_{true} = R = 3,4,5,6$.}
	\label{fig:rank}
\end{figure}

\section{Concluding Remarks}
\label{sec:conclusion}
This paper 
discusses the problem of tracking TR decompositions of streaming tensors. 
A streaming algorithm, i.e., STR, is first proposed that can efficiently monitor the new decomposition by employing complementary TR-cores to temporally store the valuable information from the previous time step.
Then, we provide a randomized variant of STR, i.e., rSTR, which can permit various randomization techniques conveniently and cheaply due to the use of the structure of the coefficient matrices in TR-ALS.
Numerical results on both real-world and synthetic datasets  
demonstrate that our algorithms are comparable to the accurate batch methods in accuracy, and outperform them considerably in terms of computational cost. 

There is some room for methodological improvement. By incorporating numerous popular regularizers and constraints, such as nonnegativity, we can further increase the adaptability of our methods, making them more suited for applications such as computer vision.
Moreover, our algorithms presume that the rank of TR decomposition remains constant throughout the streaming process. Increasing TR-ranks in streaming TR decompositions is a viable option.
In addition, 
it is also valuable to extend our methods to accommodate streaming tensors that can be modified in any mode, i.e., multi-aspect streaming tensors.

\section*{Declarations}

\subsection*{Data Availability}
The data that support the findings of this study are available from the corresponding author upon reasonable request.

\subsection*{Competing Interests}
The authors declare that they have no conflict of interest.


\bibliographystyle{spmpsci}      
\bibliography{BIT-bibliography}   


\appendix\section*{Appendix}
\section{Proofs}
\label{SUPP:proof}
We first state some preliminaries that will be used in the proofs, where \Cref{SUPP:lemma:structural-conds} is a variant of \cite[Lemma 1]{drineas2011FasterLeast} for multiple right hand sides, \Cref{SUPP:lemma:approx-mm} is a part of \cite[Lemma 8]{drineas2006FastMonte}, and \Cref{SUPP:lemma:approx-cur} is from \cite[Theorem 4]{drineas2011FasterLeast}. 
\begin{lemma} 
\label{SUPP:lemma:structural-conds}
	Let $\OPT \defeq \min_{\mat{X}} \| \mat{A} \mat{X} - \mat{Y}\|_F$ with 
    $\mat{A} \in \bb{R}^{I \times R}$ and $I > R$, let $\mat{U} \in \bb{R}^{I \times \rank(\mat{A})}$ contain the left singular vectors of $\mat{A}$, let $\mat{U}^\perp$ be an orthogonal matrix whose columns span the space perpendicular to $\range(\mat{U})$ and define $\mat{Y}^\perp \defeq \mat{U}^\perp (\mat{U}^{\perp})^\intercal \mat{Y}$.
	If $\mat{\Psi} \in \bb{R}^{m \times I}$ 
    satisfies
	\begin{equation} 
	\label{SUPP:eq:cond-1}
		\sigma^2_{\min} (\mat{\Psi} \mat{U}) \geq \frac{1}{\sqrt{2}},
	\end{equation}
	\begin{equation} 
	\label{SUPP:eq:cond-2}
		\| \mat{U}^\intercal \mat{\Psi}^\intercal \mat{\Psi} \mat{Y}^\perp\|_F^2 \leq \frac{\varepsilon}{2} \OPT^2,
	\end{equation}
	for some $\varepsilon \in (0,1)$,
	then 
	\begin{equation*}
		\| \mat{A} \tilde{\mat{X}} - \mat{Y} \|_F \leq (1+\varepsilon)\OPT,
	\end{equation*}
	where $\tilde{\mat{X}} \defeq \arg\min_{\mat{X}} \| \mat{\Psi} \mat{A} \mat{X} - \mat{\Psi} \mat{Y} \|_F$.
\end{lemma}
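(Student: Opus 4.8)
The plan is to lift the single-right-hand-side argument behind \cite[Lemma 1]{drineas2011FasterLeast} to a matrix right-hand side. First I would fix the thin SVD $\mat{A} = \mat{U}\mat{\Sigma}\mat{V}^\intercal$, so that $\mat{U}$ is exactly the matrix in the statement and $\mat{\Sigma}$ is square and invertible, and record that $\mat{X}_{\mathrm{opt}} \defeq \mat{A}^\dagger\mat{Y} = \mat{V}\mat{\Sigma}^{-1}\mat{U}^\intercal\mat{Y}$ achieves the minimum, with $\mat{A}\mat{X}_{\mathrm{opt}} = \mat{U}\mat{U}^\intercal\mat{Y}$ and optimal residual $\mat{A}\mat{X}_{\mathrm{opt}} - \mat{Y} = -\mat{Y}^\perp$; hence $\OPT = \|\mat{Y}^\perp\|_F$.

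Next I would express the sketched minimizer relative to $\mat{X}_{\mathrm{opt}}$. Since $\mat{A}\tilde{\mat{X}}$ depends only on the component of $\tilde{\mat{X}}$ in $\range(\mat{V})$, one may set $\mat{W} \defeq \mat{V}^\intercal(\tilde{\mat{X}} - \mat{X}_{\mathrm{opt}})$ and obtain $\mat{A}\tilde{\mat{X}} = \mat{A}\mat{X}_{\mathrm{opt}} + \mat{U}\mat{\Sigma}\mat{W}$, so $\mat{A}\tilde{\mat{X}} - \mat{Y} = \mat{U}\mat{\Sigma}\mat{W} - \mat{Y}^\perp$. The columns of $\mat{U}\mat{\Sigma}\mat{W}$ lie in $\range(\mat{U})$ while $\mat{U}^\intercal\mat{Y}^\perp = 0$, so the cross term in $\|\mat{U}\mat{\Sigma}\mat{W} - \mat{Y}^\perp\|_F^2$ vanishes and a Pythagorean identity gives
\[
\|\mat{A}\tilde{\mat{X}} - \mat{Y}\|_F^2 = \|\mat{\Sigma}\mat{W}\|_F^2 + \|\mat{Y}^\perp\|_F^2 = \|\mat{\Sigma}\mat{W}\|_F^2 + \OPT^2 .
\]
It therefore suffices to show $\|\mat{\Sigma}\mat{W}\|_F^2 \leq \varepsilon\,\OPT^2$, which yields $\|\mat{A}\tilde{\mat{X}} - \mat{Y}\|_F^2 \leq (1+\varepsilon)\OPT^2 \leq (1+\varepsilon)^2\OPT^2$.

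To bound $\|\mat{\Sigma}\mat{W}\|_F$ I would invoke the normal equations of the sketched problem, $(\mat{\Psi}\mat{A})^\intercal\mat{\Psi}(\mat{A}\tilde{\mat{X}} - \mat{Y}) = 0$. Substituting $\mat{A}\tilde{\mat{X}} - \mat{Y} = \mat{U}\mat{\Sigma}\mat{W} - \mat{Y}^\perp$ and $(\mat{\Psi}\mat{A})^\intercal = \mat{V}\mat{\Sigma}\mat{U}^\intercal\mat{\Psi}^\intercal$, then left-multiplying by $\mat{\Sigma}^{-1}\mat{V}^\intercal$, one reaches
\[
(\mat{\Psi}\mat{U})^\intercal(\mat{\Psi}\mat{U})\,\mat{\Sigma}\mat{W} = \mat{U}^\intercal\mat{\Psi}^\intercal\mat{\Psi}\mat{Y}^\perp .
\]
Condition \eqref{SUPP:eq:cond-1} makes $(\mat{\Psi}\mat{U})^\intercal(\mat{\Psi}\mat{U})$ invertible with $\|((\mat{\Psi}\mat{U})^\intercal(\mat{\Psi}\mat{U}))^{-1}\|_2 \leq \sqrt{2}$, so, using $\|\mat{M}\mat{B}\|_F \leq \|\mat{M}\|_2\|\mat{B}\|_F$, we get $\|\mat{\Sigma}\mat{W}\|_F \leq \sqrt{2}\,\|\mat{U}^\intercal\mat{\Psi}^\intercal\mat{\Psi}\mat{Y}^\perp\|_F$; squaring and applying \eqref{SUPP:eq:cond-2} gives $\|\mat{\Sigma}\mat{W}\|_F^2 \leq 2\cdot\frac{\varepsilon}{2}\OPT^2 = \varepsilon\,\OPT^2$, as needed.

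I expect the only delicate point to be the reduction in the second step: one must check that $\mat{A}\tilde{\mat{X}}$ is insensitive to components of $\tilde{\mat{X}}$ in $\ker(\mat{A}) = \range(\mat{V})^\perp$, and — in the rank-deficient case — that $\mat{A}\tilde{\mat{X}}$ is well defined even when $\tilde{\mat{X}}$ is not; both follow from \eqref{SUPP:eq:cond-1}, since it makes $\mat{\Psi}$ injective on $\range(\mat{U})$ and forces $\mat{\Psi}\mat{A}$ to retain the rank of $\mat{A}$ on $\range(\mat{V})$. Beyond that the argument is routine linear algebra, and the passage from a single vector to a matrix $\mat{Y}$ is free, with the Pythagorean identity and $\|\mat{M}\mat{B}\|_F \leq \|\mat{M}\|_2\|\mat{B}\|_F$ playing the roles of their vector counterparts.
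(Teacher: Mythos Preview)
Your proposal is correct and is precisely the standard lifting of \cite[Lemma~1]{drineas2011FasterLeast} from a vector right-hand side to a matrix one. The paper itself does not supply a proof of this lemma: it merely states it as a preliminary, explicitly labeling it ``a variant of \cite[Lemma~1]{drineas2011FasterLeast} for multiple right hand sides,'' so there is nothing further to compare against; your argument is exactly what the paper is implicitly invoking.
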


\begin{lemma} 
\label{SUPP:lemma:approx-mm}
	Let $\mat{A}$ and $\mat{B}$ be matrices with $I$ rows, and let $\vect{q} \in \bb{R}^I$ be a probability distribution satisfying
	\begin{equation*} 
	\label{SUPP:eq:approx-mm-lemma-cond}
		\vect{q}(i) \geq \beta \frac{\|\mat{A}(i, :)\|^2_2}{\| \mat{A} \|_F^2} ~~\text{for all}~~ i \in [I] ~~\text{and some}~~ \beta \in (0, 1].
	\end{equation*} 
	If $\mat{\Psi} \in \bb{R}^{m \times I}$ is a sampling matrix with the probability distribution $\vect{q}$, then
	\begin{equation*}
		\bb{E} \|\mat{A}^\intercal \mat{B} - \mat{A}^\intercal \mat{\Psi}^\intercal \mat{\Psi} \mat{B} \|_F^2 \leq \frac{1}{\beta m} \|\mat{A}\|_F^2 \|\mat{B}\|_F^2.
	\end{equation*}
\end{lemma}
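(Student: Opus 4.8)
The plan is to recognize the sketched product $\mat{A}^\intercal\mat{\Psi}^\intercal\mat{\Psi}\mat{B}$ as an unbiased estimator of $\mat{A}^\intercal\mat{B}$ assembled from $m$ independent samples, and then to bound its mean-squared error by a direct variance computation carried out entry by entry. First I would write the sampling operator explicitly: if $i_1,\dots,i_m \in [I]$ are the i.i.d.\ row indices drawn according to $\vect{q}$ and each sampled row is rescaled by $1/\sqrt{m\,\vect{q}(i_t)}$, then $\mat{\Psi}^\intercal\mat{\Psi} = \sum_{t=1}^m \frac{1}{m\,\vect{q}(i_t)}\vect{e}_{i_t}\vect{e}_{i_t}^\intercal$, so that
\[
\mat{A}^\intercal\mat{\Psi}^\intercal\mat{\Psi}\mat{B} = \sum_{t=1}^m \frac{1}{m\,\vect{q}(i_t)}\mat{A}(i_t,:)^\intercal\mat{B}(i_t,:)
\]
is a sum of $m$ i.i.d.\ rank-one matrices. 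Taking the expectation of a single term gives $\sum_{i=1}^I \vect{q}(i)\frac{1}{m\,\vect{q}(i)}\mat{A}(i,:)^\intercal\mat{B}(i,:) = \frac{1}{m}\mat{A}^\intercal\mat{B}$, so the estimator is unbiased and its squared Frobenius error equals the total variance of its entries.

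Next I would fix an entry $(p,q)$ and exploit independence: since the estimator's $(p,q)$ entry is a sum of $m$ i.i.d.\ scalar terms, its variance is $m$ times the variance of one term, which I bound by its raw second moment to get
\[
\mathrm{Var}\big((\mat{A}^\intercal\mat{\Psi}^\intercal\mat{\Psi}\mat{B})_{pq}\big) \le \frac{1}{m}\sum_{i=1}^I \frac{\mat{A}(i,p)^2\,\mat{B}(i,q)^2}{\vect{q}(i)}.
\]
Summing over all $(p,q)$ and interchanging the order of summation factors the inner double sum into row norms,
\[
\bb{E}\big\|\mat{A}^\intercal\mat{B}-\mat{A}^\intercal\mat{\Psi}^\intercal\mat{\Psi}\mat{B}\big\|_F^2 \le \frac{1}{m}\sum_{i=1}^I \frac{\|\mat{A}(i,:)\|_2^2\,\|\mat{B}(i,:)\|_2^2}{\vect{q}(i)}.
\]

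Finally I would invoke the hypothesis $\vect{q}(i)\ge \beta\|\mat{A}(i,:)\|_2^2/\|\mat{A}\|_F^2$, equivalently $1/\vect{q}(i)\le \|\mat{A}\|_F^2/(\beta\|\mat{A}(i,:)\|_2^2)$; substituting this cancels the factor $\|\mat{A}(i,:)\|_2^2$, pulls the constant $\|\mat{A}\|_F^2/(\beta m)$ outside the sum, and leaves $\sum_i\|\mat{B}(i,:)\|_2^2 = \|\mat{B}\|_F^2$, which yields the claimed bound $\frac{1}{\beta m}\|\mat{A}\|_F^2\|\mat{B}\|_F^2$. The computation is elementary throughout; the only place demanding care is the bookkeeping in the variance step — correctly using that independence makes the variance of the sum additive, that the summands are identically distributed (so one multiplies a single variance by $m$ against the $1/m^2$ rescaling to produce the $1/m$ factor), and that bounding variance by the second moment safely discards the subtracted mean. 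The cancellation engineered by the sampling condition in the last step is what makes the bound clean and is the conceptual heart of the argument.
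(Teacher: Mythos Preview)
Your argument is correct: writing the sketched product as an average of $m$ i.i.d.\ rank-one terms, checking unbiasedness, bounding the per-entry variance by the raw second moment, summing over entries to factor into row norms, and then invoking the hypothesis on $\vect{q}$ to cancel $\|\mat{A}(i,:)\|_2^2$ is exactly the standard derivation of this inequality. The only cosmetic point is that when $\|\mat{A}(i,:)\|_2=0$ the substitution $1/\vect{q}(i)\le \|\mat{A}\|_F^2/(\beta\|\mat{A}(i,:)\|_2^2)$ is ill-defined, but the corresponding summand is already zero, so nothing is lost.

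As for comparison with the paper: the paper does not prove this lemma at all. It simply states it as a preliminary fact, attributing it to \cite{drineas2006FastMonte} (Lemma~8 there). Your write-up is essentially the proof one finds in that reference, so there is no methodological divergence to discuss---you have supplied the argument the paper chose to import by citation.
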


\begin{lemma}
\label{SUPP:lemma:approx-cur}
	Let $\mat{A} \in \bb{R}^{I \times R}$ with $\|\mat{A}\|_2 \leq 1$, and let $\vect{q} \in \bb{R}^I$ be a probability distribution satisfying
	\begin{equation*} 
		\vect{q}(i) \geq \beta \frac{\|\mat{A}(i, :)\|^2_2}{\| \mat{A} \|_F^2} ~~\text{for all}~~ i \in [I] ~~\text{and some}~~ \beta \in (0, 1].
	\end{equation*} 
	If $\mat{\Psi} \in \bb{R}^{m \times I}$ is a sampling matrix with the probability distribution $\vect{q}$,  $\varepsilon \in (0,1)$ is an accuracy parameter, $\|\mat{A}\|_F^2 \geq \frac{1}{24}$, and 
	\begin{equation*}
		m \geq \frac{96 \|\mat{A}\|_F^2}{\beta \varepsilon^2}\ln \left( \frac{96 \|\mat{A}\|_F^2}{\beta \varepsilon^2 \sqrt{\delta}} \right),
	\end{equation*}
	then, with a probability of at least $1-\delta$,
	\begin{equation*}
		\|\mat{A}^\intercal \mat{A} - \mat{A}^\intercal \mat{\Psi}^\intercal \mat{\Psi} \mat{A} \|_F^2 \leq \varepsilon.
	\end{equation*}
\end{lemma}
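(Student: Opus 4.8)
Since \Cref{SUPP:lemma:approx-cur} is quoted from \cite[Theorem 4]{drineas2011FasterLeast}, the proof I would supply is a reconstruction of the argument there rather than anything new: it is a high-probability approximate-matrix-multiplication bound for a PSD product, obtained by viewing the sketched Gram matrix as an empirical average of rank-one matrices and boosting the elementary second-moment estimate of \Cref{SUPP:lemma:approx-mm} to an exponential tail bound via a matrix concentration inequality.

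First I would fix notation. Let $\mat{\Psi}$ be the rescaled sampling matrix associated with $\vect{q}$, so that, with $i_1,\dots,i_m$ drawn i.i.d.\ from $\vect{q}$,
\[
\mat{A}^\intercal \mat{\Psi}^\intercal \mat{\Psi} \mat{A} \;=\; \frac{1}{m}\sum_{t=1}^{m}\mat{Z}_t,\qquad \mat{Z}_t \defeq \frac{1}{\vect{q}(i_t)}\,\mat{A}(i_t,:)^\intercal \mat{A}(i_t,:),
\]
where each $\mat{Z}_t$ is symmetric PSD, $\bb{E}[\mat{Z}_t]=\mat{A}^\intercal\mat{A}$, and the sampling hypothesis $\vect{q}(i)\ge \beta\|\mat{A}(i,:)\|_2^2/\|\mat{A}\|_F^2$ forces the uniform bound $\|\mat{Z}_t\|_2 = \|\mat{A}(i_t,:)\|_2^2/\vect{q}(i_t)\le \|\mat{A}\|_F^2/\beta$. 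Using in addition $\|\mat{A}\|_2\le 1$, hence $\|\mat{A}^\intercal\mat{A}\|_2\le 1$, one also gets $\bb{E}[\mat{Z}_t^2]\preceq (\|\mat{A}\|_F^2/\beta)\,\mat{A}^\intercal\mat{A}$, so the matrix variance statistic of the $\mat{Z}_t$ is at most $\|\mat{A}\|_F^2/\beta$. Independently, \Cref{SUPP:lemma:approx-mm} with $\mat{B}=\mat{A}$ gives $\bb{E}\big\|\mat{A}^\intercal\mat{A} - \mat{A}^\intercal\mat{\Psi}^\intercal\mat{\Psi}\mat{A}\big\|_F^2 \le \|\mat{A}\|_F^4/(\beta m)$, which on its own (through Markov) only yields an unfavourable $1/(\beta\varepsilon\delta)$-type bound and is used merely as an auxiliary estimate.

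The substantive step is to apply a concentration inequality (a matrix Chernoff/Bernstein or Rudelson-type bound) to the mean-zero i.i.d.\ sum $\frac1m\sum_t(\mat{Z}_t-\bb{E}\mat{Z}_t)$, with ``scale'' $O(\|\mat{A}\|_F^2/\beta)$ and ``variance'' $\|\mat{A}\|_F^2/\beta$ as computed above; the hypotheses $\|\mat{A}\|_2\le 1$ and $\|\mat{A}\|_F^2\ge \tfrac{1}{24}$ are exactly what keeps the constants and the logarithmic factor in the clean displayed form. Substituting the stated lower bound $m \ge \frac{96\|\mat{A}\|_F^2}{\beta\varepsilon^2}\ln\!\big(\frac{96\|\mat{A}\|_F^2}{\beta\varepsilon^2\sqrt\delta}\big)$ then drives the failure probability below $\delta$, which is the claim.

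I expect this last step to be the main obstacle: the elementary second-moment bound has the wrong powers of $\varepsilon$, $\delta$, and $\|\mat{A}\|_F$, so one genuinely needs a matrix-valued concentration inequality, and the delicate part is picking the right one and tracking its constants so that they reproduce the displayed threshold on $m$ --- in particular, obtaining the linear dependence $\|\mat{A}\|_F^2$ rather than $\|\mat{A}\|_F^4$ (this is precisely where $\|\mat{A}\|_2\le 1$ enters, via the variance statistic) and the $\ln(1/\sqrt\delta)$ rather than $1/\delta$ (from using the exponential tail in place of Markov).
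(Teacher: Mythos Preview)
The paper does not supply a proof for this lemma; it merely quotes it as \cite[Theorem~4]{drineas2011FasterLeast}. You correctly identify this, and your sketch --- writing the sketched Gram matrix as an i.i.d.\ average of bounded rank-one matrices with variance controlled via $\|\mat{A}\|_2\le 1$, then invoking a Rudelson/matrix-Bernstein concentration inequality to obtain the exponential tail --- is precisely the argument used in the cited reference, so your proposal is appropriate.
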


\subsection{Proof of \Cref{thm:uni} }
We first state a theorem similar to \cite[Theorem 7]{malik2021SamplingBasedMethod}, i.e., the theoretical guarantee of uniform sampling for TR-ALS.

\begin{theorem}
\label{SUPP:thm:uni-offline}
    Let $\mat{\Psi}_{n}$ be a uniform sampling matrix defined as in \eqref{eq:sampDS}, and
    \begin{equation*}
        \tilde{\mat{G}}_{n(2)} \defeq \arg\min_{\mat{G}_{n(2)}} \left\| \mat{X}_{[n]} \mat{\Psi}_{n}^\intercal - \mat{G}_{n(2)}(\mat{\Psi}_{n}\mat{G}_{[2]}^{\ne n})^\intercal \right\|_F.
    \end{equation*}
   If 
	\begin{equation*}
		m \geq \left( \frac{2 \gamma R_{n} R_{n+1}}{\varepsilon} \right) \max \left[ \frac{48}{\varepsilon}\ln \left( \frac{96 \gamma R_{n} R_{n+1}}{ \varepsilon^2 \sqrt{\delta}} \right), \frac{1}{\delta} \right],  
	\end{equation*}
	with  $\varepsilon \in (0,1)$, $\delta \in (0,1)$, and $\gamma>1$, then the following inequality holds with a probability of at least $1-\delta$:
	\begin{equation*}
		\left\| \mat{X}_{[n]}  - \tilde{\mat{G}}_{n(2)}(\mat{G}_{[2]}^{\ne n})^\intercal \right\|_F \leq (1+\varepsilon) \min_{\mat{G}_{n(2)}} \left\| \mat{X}_{[n]} - \mat{G}_{n(2)}(\mat{G}_{[2]}^{\ne n})^\intercal \right\|_F.
	\end{equation*}
\end{theorem}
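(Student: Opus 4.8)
The plan is to place the problem inside the sketched-least-squares framework of \Cref{SUPP:lemma:structural-conds}. Set $\mat{A} = \mat{G}_{[2]}^{\ne n} \in \bb{R}^{J_n \times R_{n}R_{n+1}}$ and $\mat{Y} = \mat{X}_{[n]}^\intercal \in \bb{R}^{J_n \times I_n}$ (a right-hand side with $I_n$ columns, which is why the multiple-RHS form of the lemma is needed), so that $\OPT = \min_{\mat{G}_{n(2)}} \| \mat{X}_{[n]} - \mat{G}_{n(2)}(\mat{G}_{[2]}^{\ne n})^\intercal \|_F$ and $\tilde{\mat{G}}_{n(2)}^\intercal$ is the sketched minimizer $\tilde{\mat{X}}$ (transposing inside a Frobenius norm costs nothing). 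By \Cref{SUPP:lemma:structural-conds} it is then enough to show that, with probability at least $1-\delta$, the uniform sampling matrix $\mat{\Psi}_n = \mat{D}_n\mat{S}_n$ of \eqref{eq:sampDS} satisfies the two structural conditions \eqref{SUPP:eq:cond-1} and \eqref{SUPP:eq:cond-2}; the asserted error bound is exactly the conclusion of that lemma, transposed back to a statement about $\mat{X}_{[n]}$.

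First I would fix the meaning of $\gamma$. Let $\mat{U} \in \bb{R}^{J_n \times \rank(\mat{A})}$ hold the left singular vectors of $\mat{A}$; $\gamma > 1$ is taken to bound the non-uniformity of the leverage scores of $\mat{A}$, i.e. $\| \mat{U}(i,:) \|_2^2 \le \gamma\,\rank(\mat{A})/J_n$ for every $i$. Since $\|\mat{U}\|_F^2 = \rank(\mat{A})$, this makes the uniform distribution $\vect{q}(i) = 1/J_n$ satisfy $\vect{q}(i) \ge \beta\,\|\mat{U}(i,:)\|_2^2 / \|\mat{U}\|_F^2$ with $\beta = 1/\gamma$; in other words, uniform sampling of the rows of $\mat{A}$ is a $(1/\gamma)$-approximate leverage-score sampling, which is precisely the hypothesis in the form demanded by \Cref{SUPP:lemma:approx-mm} and \Cref{SUPP:lemma:approx-cur}. (In practice this non-uniformity is controlled through the per-core leverage scores as in \Cref{lem:lev_est}, but for the proof only the bound above is used.)

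For \eqref{SUPP:eq:cond-1} I would apply \Cref{SUPP:lemma:approx-cur} to $\mat{U}$ itself: $\|\mat{U}\|_2 = 1$ and $\|\mat{U}\|_F^2 = \rank(\mat{A}) \in [1, R_{n}R_{n+1}]$ (so the $\|\mat{A}\|_F^2 \ge 1/24$ hypothesis holds), and taking the internal accuracy equal to $\varepsilon$ gives $\|\mat{U}^\intercal\mat{U} - \mat{U}^\intercal\mat{\Psi}_n^\intercal\mat{\Psi}_n\mat{U}\|_F^2 \le \varepsilon$, hence $\sigma^2_{\min}(\mat{\Psi}_n\mat{U}) \ge 1 - \sqrt{\varepsilon} \ge 1/\sqrt{2}$; feeding $\|\mat{U}\|_F^2 \le R_{n}R_{n+1}$ and $\beta = 1/\gamma$ into the sample-size requirement of \Cref{SUPP:lemma:approx-cur} yields the logarithmic branch $\tfrac{96\gamma R_{n}R_{n+1}}{\varepsilon^2}\ln(\tfrac{96\gamma R_{n}R_{n+1}}{\varepsilon^{2}\sqrt{\delta}})$ of the stated maximum. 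For \eqref{SUPP:eq:cond-2} I would use \Cref{SUPP:lemma:approx-mm} with the factors $\mat{U}$ and $\mat{Y}^\perp$: since $\range(\mat{Y}^\perp)\perp\range(\mat{U})$ we have $\mat{U}^\intercal\mat{Y}^\perp = \mat{0}$ and $\|\mat{Y}^\perp\|_F = \OPT$, so $\bb{E}\,\|\mat{U}^\intercal\mat{\Psi}_n^\intercal\mat{\Psi}_n\mat{Y}^\perp\|_F^2 \le \tfrac{\gamma R_{n}R_{n+1}}{m}\OPT^2$, and Markov's inequality makes this at most $\tfrac{\varepsilon}{2}\OPT^2$ with probability at least $1-\delta$ once $m \ge \tfrac{2\gamma R_{n}R_{n+1}}{\varepsilon\delta}$, the $1/\delta$ branch of the maximum. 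A union bound over the two failure events (splitting the failure budget changes only constants) then delivers the conclusion via \Cref{SUPP:lemma:structural-conds}.

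The main obstacle is not conceptual but a matter of bookkeeping: arranging the two internal accuracy parameters and the split of $\delta$ so that one threshold $m$, written as the displayed $\max[\cdot,\cdot]$, simultaneously clears both structural conditions; and carrying $\rank(\mat{A})$ through every estimate, only relaxing it to $R_{n}R_{n+1}$ at the very end, since the structured coefficient matrix $\mat{G}_{[2]}^{\ne n}$ need not have full column rank. Everything substantive is already contained in \Cref{SUPP:lemma:structural-conds,SUPP:lemma:approx-mm,SUPP:lemma:approx-cur}; the one step special to this setting is recognizing uniform sampling of the rows of $\mat{G}_{[2]}^{\ne n}$ as $(1/\gamma)$-approximate leverage-score sampling, after which \Cref{thm:uni} is the same statement read off at each streaming step (the per-step sketches never re-touching the un-updated cores, as noted after \Cref{alg:rstr}).
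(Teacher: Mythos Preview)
Your proposal is correct and follows essentially the same route as the paper: interpret uniform sampling as $(1/\gamma)$-approximate leverage-score sampling of the rows of $\mat{G}_{[2]}^{\ne n}$, then verify the two structural conditions of \Cref{SUPP:lemma:structural-conds} via \Cref{SUPP:lemma:approx-cur} for the singular-value condition and \Cref{SUPP:lemma:approx-mm} plus Markov for the cross-term condition. The only cosmetic differences are that the paper assumes $\rank(\mat{G}_{[2]}^{\ne n})=R_nR_{n+1}$ from the outset rather than relaxing at the end, and it obtains $\sigma^2_{\min}(\mat{\Psi}_n\mat{U})\ge 1/\sqrt{2}$ by directly choosing the internal accuracy $1-1/\sqrt{2}$ in \Cref{SUPP:lemma:approx-cur} instead of your $1-\sqrt{\varepsilon}$ route.
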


\begin{proof}
Let $\mat{U} \in \bb{R}^{J_n \times \rank(\mat{G}_{[2]}^{\ne n})}$ contain the left singular vectors of $\mat{G}_{[2]}^{\ne n}$ and $\rank(\mat{G}_{[2]}^{\ne n}) = R_{n} R_{n+1}$.
Then, there is a $\gamma >1$ such that 
\begin{equation}
\label{SUPP:eq:row-norm}
	\|\mat{U}(i,:)\|_2^2 \leq \frac{\gamma R_{n} R_{n+1}}{J_n} ~~\text{for all}~~ i \in \left[J_n \right].
\end{equation}
Note that $\|\mat{U}\|_F = \sqrt{R_{n} R_{n+1}}$. Thus, setting $\beta = \frac{1}{\gamma}$, we have 
\begin{equation}
\label{SUPP:eq:prob}
	\frac{1}{J_n} \geq \beta \frac{\|\mat{U}(i,:)\|_2^2}{\|\mat{U}\|_F^2}.
\end{equation}
That is, the uniform probability distribution $\vect{q}$ on $[J_n]$ satisfies \eqref{SUPP:eq:prob}.
Moreover, it is easy to see that
$\|\mat{U}\|_2 = 1\leq 1$, $\|\mat{U}\|_F^2 = R_{n} R_{n+1}>\frac{1}{24}$, and 
\begin{equation*}
	m \geq \frac{96R_{n} R_{n+1}}{\beta \varepsilon^2}\ln \left( \frac{96 R_{n} R_{n+1}}{\beta \varepsilon^2 \sqrt{\delta_1}} \right).
\end{equation*}
Thus, noting that $\mat{\Psi}_{n}$ is a sampling matrix with the probability distribution $\vect{q}$, applying \Cref{SUPP:lemma:approx-cur}  implies that
\begin{equation*}
	\|\mat{U}^\intercal \mat{U} - \mat{U}^\intercal \mat{\Psi}_{n}^\intercal \mat{\Psi}_{n} \mat{U} \|_2 \leq \varepsilon.
\end{equation*}
On the other hand, note that for all $i \in [R_{n} R_{n+1}]$,
\begin{align*}
	|1-\sigma^2_{i}(\mat{\Psi}_{n}\mat{U})| 
	&= |\sigma_{i}(\mat{U}^\intercal \mat{U}) - \sigma_{i}(\mat{U}^\intercal\mat{\Psi}_{n}^\intercal \mat{\Psi}_{n} \mat{U})| \\
	&\leq \|\mat{U}^\intercal \mat{U} - \mat{U}^\intercal \mat{\Psi}_{n}^\intercal \mat{\Psi}_{n} \mat{U} \|_2.
\end{align*}
Thus, choosing $\varepsilon = 1-1/\sqrt{2}$ gives that $\sigma^2_{\min} (\mat{\Psi}_{n} \mat{U}) \geq \frac{1}{\sqrt{2}}$, therefore \eqref{SUPP:eq:cond-1} is satisfied.

Next, we check \eqref{SUPP:eq:cond-2}. Recall that $(\mat{X}_{[n]}^\intercal)^\perp \defeq \mat{U}^\perp (\mat{U}^{\perp})^\intercal \mat{X}_{[n]}^\intercal$. 
Hence, $\mat{U}^\intercal (\mat{X}_{[n]}^\intercal)^\perp =0$ and 
\begin{equation*}
	\| (\mat{\Psi}_{n}\mat{U})^\intercal \mat{\Psi}_{n} (\mat{X}_{[n]}^\intercal)^\perp \|_2^2 = \| \mat{U}^\intercal \mat{\Psi}_{n}^\intercal  \mat{\Psi}_{n} (\mat{X}_{[n]}^\intercal)^\perp - \mat{U}^\intercal (\mat{X}_{[n]}^\intercal)^\perp \|_2^2.
\end{equation*}
Thus, noting \eqref{SUPP:eq:row-norm} and \eqref{SUPP:eq:prob}, applying \Cref{SUPP:lemma:approx-mm}, we get 
\begin{equation*}
	\bb{E} \left[ \| (\mat{\Psi}_{n}\mat{U})^\intercal \mat{\Psi}_{n} (\mat{X}_{[n]}^\intercal)^\perp \|_2^2 \right] \leq \frac{1}{\beta m} \| \mat{U} \|_F^2 \| (\mat{X}_{[n]}^\intercal)^\perp \|_2^2 = \frac{R_{n} R_{n+1} \OPT^2}{\beta m},
\end{equation*}
where $\OPT = \min_{\mat{G}_{n(2)}} \left\| \mat{G}_{[2]}^{\ne n} \mat{G}_{n(2)}^\intercal - \mat{X}_{[n]}^\intercal \right\|_F$.
Markov’s inequality now implies that with probability at least $1-\delta_2$
\begin{equation*}
	\| (\mat{\Psi}_{n}\mat{U})^\intercal \mat{\Psi}_{n} (\mat{X}_{[n]}^\intercal)^\perp \|_2^2 \leq \frac{ R_{n} R_{n+1} \OPT^2}{\delta_2 \beta m}.
\end{equation*}
Setting $m \geq \frac{2R_{n} R_{n+1}}{\delta_2 \beta \varepsilon}$ and using the value of $\beta$ specified above, we have that \eqref{SUPP:eq:cond-2} is indeed satisfied.

Finally, using \Cref{SUPP:lemma:structural-conds} concludes the proof of the theorem.
\qed
\end{proof}

\emph{Proof\ of\ \Cref{thm:uni}}
 For the temporal mode $N$, if 
 \begin{align*}
     \tilde{m}_1 &\geq \left( \frac{2 \gamma R_{N} R_{1}}{\tilde{\varepsilon}_1} \right) \max \left[ \frac{48}{\tilde{\varepsilon}_1}\ln \left( \frac{96 \gamma R_{N} R_{1}}{ \tilde{\varepsilon}_1^2 \sqrt{\delta}} \right), \frac{1}{\delta} \right],\\  \tilde{m}_2 &\geq \left( \frac{2 \gamma R_{N} R_{1}}{\tilde{\varepsilon}_2} \right) \max \left[ \frac{48}{\tilde{\varepsilon}_2}\ln \left( \frac{96 \gamma R_{N} R_{1}}{ \tilde{\varepsilon}_2^2 \sqrt{\delta}} \right), \frac{1}{\delta} \right],\\
     &\cdots
 \end{align*} 
 according to \Cref{SUPP:thm:uni-offline},
at each time step we can obtain a corresponding upper error bound between the new coming tensor and its decomposition as follows
	\begin{align*}
		\text{The 1st time step:} & \left\| \mat{X}_{[N]}^{old}  - \tilde{\mat{G}}_{N(2)}^{old}(\mat{G}_{[2]}^{\ne N})^\intercal \right\|_F \leq (1+\tilde{\varepsilon}_1) \min_{\mat{G}_{N(2)}^{old}} \left\| \mat{X}_{[N]}^{old} - \mat{G}_{N(2)}^{old}(\mat{G}_{[2]}^{\ne N})^\intercal \right\|_F,\\
		\text{The 2nd time step:} & \left\| \mat{X}_{[N]}^{new}  - \tilde{\mat{G}}_{N(2)}^{new}(\mat{G}_{[2]}^{\ne N})^\intercal \right\|_F \leq (1+\tilde{\varepsilon}_2) \min_{\mat{G}_{N(2)}^{new}} \left\| \mat{X}_{[N]}^{new} - \mat{G}_{N(2)}^{new}(\mat{G}_{[2]}^{\ne N})^\intercal \right\|_F,\\
       &  \cdots 
	\end{align*}
        To obtain an upper bound on the error for all current time steps, 
	let $\tilde{\varepsilon} = \min\{\tilde{\varepsilon}_1, \tilde{\varepsilon}_2, \cdots\}$, then the following holds with a probability of at least $1-\delta$:
	\begin{equation*}
		\left\| \mat{X}_{[N]}  - \tilde{\mat{G}}_{N(2)}(\mat{G}_{[2]}^{\ne N})^\intercal \right\|_F \leq (1+\tilde{\varepsilon}) \min_{\mat{G}_{N(2)}} \left\| \mat{X}_{[N]} - \mat{G}_{N(2)}(\mat{G}_{[2]}^{\ne N})^\intercal \right\|_F,
	\end{equation*}
  for 
  $$\tilde{m} \geq \left( \frac{2 \gamma R_{N} R_{1}}{\tilde{\varepsilon}} \right) \max \left[ \frac{48}{\tilde{\varepsilon}}\ln \left( \frac{96 \gamma R_{N} R_{1}}{ \tilde{\varepsilon}^2 \sqrt{\delta}} \right), \frac{1}{\delta} \right].$$
 
	For the non-temporal mode $n$, 
    if 
    $$m'_n \geq \left( \frac{2 \gamma R_{n} R_{n+1}}{\varepsilon'_n} \right) \max \left[ \frac{48}{\varepsilon'_n}\ln \left( \frac{96 \gamma R_{n} R_{n+1}}{ {\varepsilon'}_n^2 \sqrt{\delta}} \right), \frac{1}{\delta} \right],$$ 
    we have
	\begin{equation*}
		\left\| \mat{X}_{[n]}  - \tilde{\mat{G}}_{n(2)}(\mat{G}_{[2]}^{\ne n})^\intercal \right\|_F \leq (1+\varepsilon'_n) \min_{\mat{G}_{n(2)}} \left\| \mat{X}_{[n]} - \mat{G}_{n(2)}(\mat{G}_{[2]}^{\ne n})^\intercal \right\|_F.
	\end{equation*}
	Thus, setting $\varepsilon = \min\{\tilde{\varepsilon}, \varepsilon'_1, \varepsilon'_2, \cdots, \varepsilon'_{N-1}\}$, the proof can be completed.
 \qed

Along the same line, the proofs of \Cref{thm:lev,thm:ksrft} can be completed by using \cite[Theorem 7]{malik2021SamplingBasedMethod} and \cite[Theorem 5]{yu2022PracticalSketchingBased}, repectively.

\section{Specific Algorithms Based on Different Sketches}
\label{SUPP:alg}

\begin{breakablealgorithm}\label{alg:rstr_uni}
\caption{Randomized streaming TR decomposition with uniform sampling (rSTR-U)}
\textbf{Input:} Initial tensor $\tensor{X}^{init}$, TR-ranks $R_1, \cdots, R_N$, new data tensor $\tensor{X}^{new}$ and sampling size $m$

\textbf{Output:} TR-cores $\{ \tensor{G}_n \in \bb{R}^{R_n \times I_n \times R_{n+1}} \}_{n=1}^N$
\begin{algorithmic}[1]
	\Statex // \texttt{Initialization stage}
	\State Using TR-ALS-Sampled (uniform) or other algorithms to decompose $\tensor{X}^{init}$ into TR-cores $\tensor{G}_{1}, \cdots, \tensor{G}_{N}$
	\For{$k = 1, \cdots, N$}
		\State $\texttt{idxs}(:,k) \leftarrow \textsc{Randsample}(I_k, m)$
		\State $(\tensor{G}_{k})_{S} \leftarrow \tensor{G}_{k}(:, \texttt{idxs}(:,k), :)$
	\EndFor
	\For{$n = 1, \cdots, N-1$}
	\State Let $\tensor{G}^{\ne n}_S$ be a tensor of size $R_{n+1} \times m \times R_{n+1}$, where every lateral slice is an $R_{n+1} \times R_{n+1}$ identity matrix
	\For{$k = n+1, \cdots, N, 1, \cdots, n-1$}
		\State $\tensor{G}^{\ne n}_S \leftarrow \tensor{G}^{\ne n}_S \boxast_2 (\tensor{G}_{k})_{S}$
	\EndFor
	\State $\mat{X}_{S[n]}^{init} \leftarrow \textsc{Mode-n-Unfolding}(\tensor{X}(\texttt{idxs}(:,1), \cdots, \texttt{idxs}(:,n-1),:,\texttt{idxs}(:,n+1),\cdots \texttt{idxs}(:,N)))$
	\EndFor
	\For{$n = 1, \cdots, N-1$}	
	\State $\mat{P}_{n} \leftarrow \mat{X}_{S[n]}^{init} \mat{G}^{\ne n}_{S[2]}$
	\State $\mat{Q}_{n} \leftarrow \left( \mat{G}^{\ne n}_{S[2]} \right)^\intercal \mat{G}^{\ne n}_{S[2]}$
	\EndFor
	\For{$\tau = 1, \cdots, t$ time steps}
	\Statex // \texttt{Update stage for temporal mode}
	\State Let $\tensor{G}^{\ne N}_S$ be a tensor of size $R_{1} \times m \times R_{1}$, where every lateral slice is an $R_{1} \times R_{1}$ identity matrix
	\For{$k = 1, \cdots, N-1$}
		\State $\tensor{G}^{\ne N}_S \leftarrow \tensor{G}^{\ne N}_S \boxast_2 (\tensor{G}_{k})_{S}$
	\EndFor
	\State $\mat{X}_{S[N]}^{new} \leftarrow \textsc{Mode-n-Unfolding}(\tensor{X}^{new}(\texttt{idxs}(:,1), \cdots ,\texttt{idxs}(:,N-1),:))$
	\State $\mat{G}_{N(2)}^{new} \leftarrow \mat{X}_{S[N]}^{new} \left( \left( \mat{G}_{S[2]}^{\ne N} \right)^\intercal \right)^\dagger$
	\State $\mat{G}_{N(2)} \leftarrow \begin{bmatrix} \mat{G}_{N(2)}^{old} \\ \mat{G}_{N(2)}^{new} \end{bmatrix}$ and reshape $\mat{G}_{N(2)}$ to $\tensor{G}_{N}$
	\State $\texttt{idxs}(:,N) \leftarrow \textsc{Randsample}(t^{new}, m)$
        \Comment $t^{new}$ is the temporal dimension of $\tensor{X}^{new}$
	\State $(\tensor{G}_{N}^{new})_{S} \leftarrow \tensor{G}_{N}^{new}(:, \texttt{idxs}(:,N), :)$
	\Statex // \texttt{Update stage for non-temporal modes}
	\For{$n = 1, \cdots, N-1$}
	\State Let $\left(\tensor{G}^{\ne n}_{new}\right)_{S}$ be a tensor of size $R_{n+1} \times m \times R_{n+1}$, where every lateral slice is an $R_{n+1} \times R_{n+1}$ identity matrix
	\For{$k = n+1, \cdots, N, 1, \cdots, n-1$}
	\State $\left(\tensor{G}^{\ne n}_{new}\right)_{S} \leftarrow \left(\tensor{G}^{\ne n}_{new}\right)_{S} \boxast_2 (\tensor{G}_{k})_{S}$
        \Comment If $k=N$, $(\tensor{G}_{k})_{S}$ will be $(\tensor{G}_{N}^{new})_{S}$
	\EndFor
	\State $\mat{X}_{S[n]}^{new} \leftarrow \textsc{Mode-n-Unfolding}(\tensor{X}^{new}(\texttt{idxs}(:,1), \cdots, \texttt{idxs}(:,n-1),:,\texttt{idxs}(:,n+1),\cdots \texttt{idxs}(:,N)))$
	\State $\mat{P}_{n} \leftarrow \mat{P}_{n} + \mat{X}_{S[n]}^{new}  \left(\mat{G}^{\ne n}_{new} \right)_{S[2]}$
	\State $\mat{Q}_{n} \leftarrow \mat{Q}_{n} + \left(\mat{G}^{\ne n}_{new} \right)_{S[2]}^\intercal \left(\mat{G}^{\ne n}_{new} \right)_{S[2]}$
	\State $\mat{G}_{n(2)} \leftarrow \mat{P}_{n} \mat{Q}_{n}^{\dagger}$ and reshape $\mat{G}_{n(2)}$ to $\tensor{G}_{n}$
	\State $\texttt{idxs}(:,n) \leftarrow \textsc{Randsample}(I_n, m)$
	\State $(\tensor{G}_{n})_{S} \leftarrow \tensor{G}_{n}(:, \texttt{idxs}(:,n), :)$
	\EndFor
	\EndFor
\end{algorithmic}
\end{breakablealgorithm}

\begin{breakablealgorithm}\label{alg:rstr_lev}
\caption{Randomized streaming TR decomposition with leverage-based sampling (rSTR-L)}
\textbf{Input:} Initial tensor $\tensor{X}^{init}$, TR-ranks $R_1, \cdots, R_N$, new data tensor $\tensor{X}^{new}$ and sampling size $m$

\textbf{Output:} TR-cores $\{ \tensor{G}_n \in \bb{R}^{R_n \times I_n \times R_{n+1}} \}_{n=1}^N$
\begin{algorithmic}[1]
	\Statex // \texttt{Initialization stage}
	\State Using TR-ALS-Sampled or other algorithms to decompose $\tensor{X}^{init}$ into TR-cores $\tensor{G}_{1}, \cdots, \tensor{G}_{N}$ 
	\State Compute the leverage-based probability distribution $\vect{p}_k$ of $\mat{G}_{k(2)}$ for $k = 1, \cdots, N$
	\For{$k = 1, \cdots, N$}
		\State $\texttt{idxs}(:,k) \leftarrow \textsc{Randsample}(I_k, m, true, \vect{p}_k)$
		\State $(\tensor{G}_{k})_{S} \leftarrow \tensor{G}_{k}(:, \texttt{idxs}(:,k), :)$
	\EndFor
	\For{$n = 1, \cdots, N-1$}
	\State Let $\tensor{G}^{\ne n}_S$ be a tensor of size $R_{n+1} \times m \times R_{n+1}$, where every lateral slice is an $R_{n+1} \times R_{n+1}$ identity matrix
	\For{$k = n+1, \cdots, N, 1, \cdots, n-1$}
		\State $\tensor{G}^{\ne n}_S \leftarrow \tensor{G}^{\ne n}_S \boxast_2 (\tensor{G}_{k})_{S}$
	\EndFor
	\State $\mat{X}_{S[n]}^{init} \leftarrow \textsc{Mode-n-Unfolding}(\tensor{X}(\texttt{idxs}(:,1), \cdots, \texttt{idxs}(:,n-1),:,\texttt{idxs}(:,n+1),\cdots \texttt{idxs}(:,N)))$
	\EndFor
	\For{$n = 1, \cdots, N-1$}	
	\State $\mat{P}_{n} \leftarrow \mat{X}_{S[n]}^{init} \mat{G}^{\ne n}_{S[2]}$
	\State $\mat{Q}_{n} \leftarrow \left( \mat{G}^{\ne n}_{S[2]} \right)^\intercal \mat{G}^{\ne n}_{S[2]}$
	\EndFor
	\For{$\tau = 1, \cdots, t$ time steps}
	\Statex // \texttt{Update stage for temporal mode}
	\State Let $\tensor{G}^{\ne N}_S$ be a tensor of size $R_{1} \times m \times R_{1}$, where every lateral slice is an $R_{1} \times R_{1}$ identity matrix
	\For{$k = 1, \cdots, N-1$}
	\State $\tensor{G}^{\ne N}_S \leftarrow \tensor{G}^{\ne N}_S \boxast_2 (\tensor{G}_{k})_{S}$
	\EndFor
	\State $\mat{X}_{S[N]}^{new} \leftarrow \textsc{Mode-n-Unfolding}(\tensor{X}^{new}(\texttt{idxs}(:,1), \cdots ,\texttt{idxs}(:,N-1),:))$
	\State $\mat{G}_{N(2)}^{new} \leftarrow \mat{X}_{S[N]}^{new} \left( \left( \mat{G}_{S[2]}^{\ne N} \right)^\intercal \right)^\dagger$
	\State $\mat{G}_{N(2)} \leftarrow \begin{bmatrix} \mat{G}_{N(2)}^{old} \\ \mat{G}_{N(2)}^{new} \end{bmatrix}$ and reshape $\mat{G}_{N(2)}$ to $\tensor{G}_{N}$
	\State Compute $\vect{p}_N$ for $\mat{G}_{N(2)}^{new}$
	\State $\texttt{idxs}(:,N) \leftarrow \textsc{Randsample}(t^{new}, m, true, \vect{p}_N)$
        \Comment $t^{new}$ is the temporal dimension of $\tensor{X}^{new}$
	\State $(\tensor{G}_{N})_{S} \leftarrow \tensor{G}_{N}(:, \texttt{idxs}(:,N), :)$
	\Statex // \texttt{Update stage for non-temporal modes}
	\For{$n = 1, \cdots, N-1$}
	\State Let $\left(\tensor{G}^{\ne n}_{new}\right)_{S}$ be a tensor of size $R_{n+1} \times m \times R_{n+1}$, where every lateral slice is an $R_{n+1} \times R_{n+1}$ identity matrix
	\For{$k = n+1, \cdots, N, 1, \cdots, n-1$}
	\State $\left(\tensor{G}^{\ne n}_{new}\right)_{S} \leftarrow \left(\tensor{G}^{\ne n}_{new}\right)_{S} \boxast_2 (\tensor{G}_{k})_{S}$
        \Comment If $k=N$, $(\tensor{G}_{k})_{S}$ will be $(\tensor{G}_{N}^{new})_{S}$
	\EndFor
	\State $\mat{X}_{S[n]}^{new} \leftarrow \textsc{Mode-n-Unfolding}(\tensor{X}^{new}(\texttt{idxs}(:,1), \cdots, \texttt{idxs}(:,n-1),:,\texttt{idxs}(:,n+1),\cdots \texttt{idxs}(:,N)))$
	\State $\mat{P}_{n} \leftarrow \mat{P}_{n} + \mat{X}_{S[n]}^{new}  \left(\mat{G}^{\ne n}_{new} \right)_{S[2]}$
	\State $\mat{Q}_{n} \leftarrow \mat{Q}_{n} + \left(\mat{G}^{\ne n}_{new} \right)_{S[2]}^\intercal \left(\mat{G}^{\ne n}_{new} \right)_{S[2]}$
	\State $\mat{G}_{n(2)} \leftarrow \mat{P}_{n} \mat{Q}_{n}^{\dagger}$ and reshape $\mat{G}_{n(2)}$ to $\tensor{G}_{n}$
	\State Recompute $\vect{p}_n$ for updated $\mat{G}_{n(2)}$
	\State $\texttt{idxs}(:,n) \leftarrow \textsc{Randsample}(I_n, m, true, \vect{p}_n)$
	\State $(\tensor{G}_{n})_{S} \leftarrow \tensor{G}_{n}(:, \texttt{idxs}(:,n), :)$
	\EndFor
	\EndFor
\end{algorithmic}
\end{breakablealgorithm}

\begin{breakablealgorithm}\label{alg:rstr_ksrft}
\label{SUPP:alg:rSTR-K}
\caption{Randomized streaming TR decomposition with KSRFT (rSTR-K)}
\textbf{Input:} Initial tensor $\tensor{X}^{init}$, TR-ranks $R_1, \cdots, R_N$, new data tensor $\tensor{X}^{new}$ and sketch size $m$

\textbf{Output:} TR-cores $\{ \tensor{G}_n \in \bb{R}^{R_n \times I_n \times R_{n+1}} \}_{n=1}^N$
\begin{algorithmic}[1]
	\Statex // \texttt{Initialization stage}
	\State Using TR-KSRFT-ALS or other algorithms to decompose $\tensor{X}^{init}$ into TR-cores $\tensor{G}_{1}, \cdots, \tensor{G}_{N}$ 
	\State Define random sign-flip operators $\mat{D}_j$ and FFT matrices $\mat{F}_j$, for $j \in [N]$
	\State Mix TR-cores: $\hat{\tensor{G}}_{n} \leftarrow \tensor{G}_{n} \times_2 (\mat{F}_n \mat{D}_n)$, for $ n = {1,2,\cdots, N}$ 
	\State Mix initial tensor: $\hat{\tensor{X}}^{init} \leftarrow \tensor{X} \times_1 (\mat{F}_1 \mat{D}_1) \times_2 (\mat{F}_2 \mat{D}_2) \cdots \times_N (\mat{F}_N \mat{D}_N)$
	\For{$k = 1, \cdots, N$}
		\State $\texttt{idxs}(:,k) \leftarrow \textsc{Randsample}(I_k, m)$
		\State $(\hat{\tensor{G}}_{k})_{S} \leftarrow \hat{\tensor{G}}_{k}(:, \texttt{idxs}(:,k), :)$
	\EndFor
	\For{$n = 1, \cdots, N-1$}
	\State Let $\hat{\tensor{G}}^{\ne n}_S$ be a tensor of size $R_{n+1} \times m \times R_{n+1}$, where every lateral slice is an $R_{n+1} \times R_{n+1}$ identity matrix
	\For{$k = n+1, \cdots, N, 1, \cdots, n-1$}
		\State $\hat{\tensor{G}}^{\ne n}_S \leftarrow \hat{\tensor{G}}^{\ne n}_S \boxast_2 (\hat{\tensor{G}}_{k})_{S}$
	\EndFor
	\State $\hat{\mat{X}}_{S[n]}^{init} \leftarrow \textsc{Mode-n-Unfolding}(\hat{\tensor{X}}(\texttt{idxs}(:,1), \cdots, \texttt{idxs}(:,n-1),:,\texttt{idxs}(:,n+1),\cdots \texttt{idxs}(:,N)))$
	\State $\hat{\mat{X}}_{S[n]}^{init} \leftarrow  \mat{D}_n \mat{F}_n^* \hat{\mat{X}}_{S[n]}$
	\EndFor
	\For{$n = 1, \cdots, N-1$}	
	\State $\mat{P}_{n} \leftarrow \hat{\mat{X}}_{S[n]}^{init} \hat{\mat{G}}^{\ne n}_{S[2]}$
	\State $\mat{Q}_{n} \leftarrow \left( \hat{\mat{G}}^{\ne n}_{S[2]} \right)^\intercal \hat{\mat{G}}^{\ne n}_{S[2]}$
	\EndFor
	\For{$\tau = 1, \cdots, t$ time steps}
	\State Redefine random sign-flip operators $\mat{D}_j$ and FFT matrices $\mat{F}_j$, for $j \in [N]$
	\State Mix new tensor: $\hat{\tensor{X}}^{new} \leftarrow \tensor{X} \times_1 (\mat{F}_1 \mat{D}_1) \times_2 (\mat{F}_2 \mat{D}_2) \cdots \times_N (\mat{F}_N \mat{D}_N)$ \label{SUPP:line:update-mix}
        \State Mix TR-cores: $\hat{\tensor{G}}_{n} \leftarrow \tensor{G}_{n} \times_2 (\mat{F}_n \mat{D}_n)$, for $ n = {1,2,\cdots, N-1}$
	\Statex // \texttt{Update stage for temporal mode}
	\State Let $\hat{\tensor{G}}^{\ne N}_S$ be a tensor of size $R_{1} \times m \times R_{1}$, where every lateral slice is an $R_{1} \times R_{1}$ identity matrix
	\For{$k = 1, \cdots, N-1$}
		\State $\hat{\tensor{G}}^{\ne N}_S \leftarrow \hat{\tensor{G}}^{\ne N}_S \boxast_2 (\hat{\tensor{G}}_{k})_{S}$
	\EndFor
	\State $\hat{\mat{X}}_{S[N]}^{new} \leftarrow \textsc{Mode-n-Unfolding}(\hat{\tensor{X}}^{new}(\texttt{idxs}(:,1), \cdots ,\texttt{idxs}(:,N-1),:))$
	\State $\hat{\mat{X}}_{S[N]}^{init} \leftarrow  \mat{D}_N \mat{F}_N^* \hat{\mat{X}}_{S[N]}$
	\State $\mat{G}_{N(2)}^{new} \leftarrow \Re\left(\hat{\mat{X}}_{S[N]}^{new} \right)\left( \Re\left( \left( \hat{\mat{G}}_{S[2]}^{\ne N} \right)^\intercal \right)\right)^\dagger$
	\State $\mat{G}_{N(2)} \leftarrow \begin{bmatrix} \mat{G}_{N(2)}^{old} \\ \mat{G}_{N(2)}^{new} \end{bmatrix}$ and reshape $\mat{G}_{N(2)}$ to $\tensor{G}_{N}$
	\State $\hat{\tensor{G}}_{N}^{new} \leftarrow \tensor{G}_{N}^{new} \times_2 (\mat{F}_N \mat{D}_N)$
	\State $\texttt{idxs}(:,N) \leftarrow \textsc{Randsample}(t^{new}, m)$
        \Comment $t^{new}$ is the temporal dimension of $\tensor{X}^{new}$
	\State $\left(\hat{\tensor{G}}_{N}^{new}\right)_{S} \leftarrow \hat{\tensor{G}}_{N}^{new}(:, \texttt{idxs}(:,N), :)$
	\Statex // \texttt{Update stage for non-temporal modes}
	\For{$n = 1, \cdots, N-1$}
	\State Let $\left(\hat{\tensor{G}}^{\ne n}_{new}\right)_{S}$ be a tensor of size $R_{n+1} \times m \times R_{n+1}$, where every lateral slice is an $R_{n+1} \times R_{n+1}$ identity matrix
	\For{$k = n+1, \cdots, N, 1, \cdots, n-1$}
	\State $\left(\hat{\tensor{G}}^{\ne n}_{new}\right)_{S} \leftarrow \left(\hat{\tensor{G}}^{\ne n}_{new}\right)_{S} \boxast_2 (\hat{\tensor{G}}_{k})_{S}$
        \Comment If $k=N$, $(\hat{\tensor{G}}_{k})_{S}$ will be $(\hat{\tensor{G}}_{N}^{new})_{S}$
	\EndFor
	\State $\hat{\mat{X}}_{S[n]}^{new} \leftarrow \textsc{Mode-n-Unfolding}(\hat{\tensor{X}}^{new}(\texttt{idxs}(:,1), \cdots, \texttt{idxs}(:,n-1),:,\texttt{idxs}(:,n+1),\cdots \texttt{idxs}(:,N)))$
	\State $\hat{\mat{X}}_{S[n]}^{init} \leftarrow  \mat{D}_n \mat{F}_n^* \hat{\mat{X}}_{S[n]}$
	\State $\mat{P}_{n} \leftarrow \mat{P}_{n} + \mat{X}_{S[n]}^{new}  \overline{\left(\mat{G}^{\ne n}_{new} \right)_{S[2]}}$
	\State $\mat{Q}_{n} \leftarrow \mat{Q}_{n} + \left(\mat{G}^{\ne n}_{new} \right)_{S[2]}^\intercal \overline{\left(\mat{G}^{\ne n}_{new} \right)_{S[2]}}$
	\State $\mat{G}_{n(2)} \leftarrow \Re(\mat{P}_{n}) \Re(\mat{Q}_{n})^{\dagger}$ and reshape $\mat{G}_{n(2)}$ to $\tensor{G}_{n}$
	\State $\hat{\tensor{G}}_{n} \leftarrow \tensor{G}_{n} \times_2 (\mat{F}_n \mat{D}_n)$
	\State $\texttt{idxs}(:,n) \leftarrow \textsc{Randsample}(I_n, m)$
	\State $(\hat{\tensor{G}}_{n})_{S} \leftarrow \hat{\tensor{G}}_{n}(:, \texttt{idxs}(:,n), :)$
	\EndFor
	\EndFor
\end{algorithmic}
\end{breakablealgorithm}

\end{sloppypar}
\end{document}